\documentclass[11pt]{amsart}
\usepackage[a4paper,margin=29mm]{geometry}
\usepackage[T1]{fontenc}
\usepackage{lmodern,microtype,amsmath,amssymb,mathtools,enumitem}
\usepackage[hidelinks]{hyperref}

\newtheorem{theorem}{Theorem}[section]
\newtheorem{proposition}[theorem]{Proposition}
\newtheorem{lemma}[theorem]{Lemma}

\numberwithin{equation}{section}
\setlist[enumerate]{leftmargin=2em,itemsep=3pt}
\title[Positive Toeplitz operators between mixed-norm Fock spaces]{Positive Toeplitz Operators Between Gaussian Mixed-Norm Fock Spaces}
\author{Deepak K. Dileep}
\address{Department of Applied Mathematics, National Yang Ming Chiao Tung University, Hsinchu 30010, Taiwan}
\email{deepakkd@nycu.edu.tw}
\author{Xiang Fang}
\address{Department of Applied Mathematics, National Yang Ming Chiao Tung University, Hsinchu 30010, Taiwan}
\email{xfang@nycu.edu.tw}
\author{Pham Trong Tien}
\address{VNU University of Science, Vietnam National University, Hanoi, Hanoi, Vietnam}
\email{phamtien@vnu.edu.vn}
\subjclass[2020]{Primary 47B35, 30H20; Secondary 46E30, 32A15}
\keywords{Positive Toeplitz operator, Gaussian radial--angular mixed-norm Fock space, local mass, Gaussian Berezin profile, quasi-Banach space, compactness, essential norm}
\date{}
\begin{document}

\begin{abstract}
We study positive measure-symbol Toeplitz operators between Gaussian
radial--angular mixed-norm Fock spaces throughout the full range of exponents, including infinity. We obtain a complete
characterization of boundedness in terms of weighted local masses of
the symbol measure, or equivalently in terms of its Gaussian Berezin
profile. The criterion separates the transfer of angular integrability
from that of radial summability and remains valid in the quasi-Banach
and endpoint regimes.

\smallskip
\noindent
We further characterize compactness and obtain the corresponding essential-norm estimates through the
asymptotic behavior of the same local-mass and Berezin profiles, and
determine the mappings involving the large and little
outer-endpoint spaces. We also prove an independent characterization
of bounded positive Toeplitz sesquilinear forms. The proofs combine
atomic recovery, localized Gaussian matrix estimates, and Rademacher
extraction.
\end{abstract}

\maketitle

\section{Introduction and main results}\label{sec:intro}

Toeplitz operators form one of the basic classes of operators on spaces of analytic functions, and their boundedness, compactness, spectral properties, and ideal membership have been studied extensively on Hardy, Bergman, and related spaces; see, for example, \cite{ZhuBergmanToeplitz,ZhuOperator} and the references therein. In the Gaussian Fock setting, positive Toeplitz operators with measure symbols admit particularly geometric descriptions in terms of local masses, Fock--Carleson measures, and Berezin transforms; see \cite{IsralowitzZhu,SchusterVarolin,Zhu}. Off-diagonal mappings between classical Fock spaces were studied in \cite{HuLv11,LuLv,Mengestie}, while related questions for generalized, doubling, and large Fock spaces, as well as essential norms, have been considered in \cite{Arroussi,HuLv14,HuLu,HuLv17,Lv,WangXu}.

\smallskip
\noindent
Mixed-norm Fock spaces provide a two-parameter refinement of the classical Fock scale by separating angular integrability on circles from radial summability. Several aspects of this scale have been developed in recent years. Random analytic functions in Fock-type mixed norms were studied in \cite{FangTienRandom}, while Constantin and Peláez established a Littlewood--Paley formula closely related to the radial mixed-norm structure in \cite{ConstantinPelaez}. Blasco and Galbis studied Hausdorff operators on Fock spaces and obtained several results in the mixed-norm setting in \cite{BlascoGalbis}; this direction was subsequently extended by Liu to weighted mixed-norm Fock spaces in \cite{LiuHausdorff}. Liu also studied Fock projections on mixed-norm spaces in \cite{LiuProjection}, and the off-diagonal projection problem was subsequently considered in \cite{LiuShiWei}. More recently, the intrinsic structure of the Gaussian radial--angular scale, including sharp growth, embeddings, atomic decompositions, Carleson measures, and duality, was developed in the companion paper \cite{FT}, while its sampling and interpolation theory was established in \cite{SI}.

\smallskip
\noindent
The aim of the present paper is to develop the positive Toeplitz theory on this radial--angular scale. The main new feature is the interaction between the local distribution of the symbol measure and the two independent integrability structures of the source and target spaces: angular and radial transfer must therefore be treated separately. This distinguishes the problem from the classical one-index Fock theory, where only one integrability parameter is present, and from the mixed-norm projection problem, where no symbol measure enters.

\subsection{The mixed-norm scale and the Toeplitz problem}

Let $\mathcal H(\mathbb C)$ denote the space of entire functions. For
$0<p \leq \infty$ and $r\geq0$, set
\[
 M_p(f,r)
 := \begin{cases}
 \displaystyle
 \left(
 \frac1{2\pi}\int_0^{2\pi}|f(re^{i\theta})|^p\,d\theta
 \right)^{\frac{1}{p}}, \qquad & 0 < p < \infty, \\[3mm]
 \displaystyle
 \max_{|z|=r}|f(z)|, \qquad & p = \infty.
 \end{cases}
\]
For $\alpha>0$ and $0<p,q\leq\infty$, define
\[
 \|f\|_{\mathcal F_\alpha^{p,q}}
 :=
 \begin{cases}
 \displaystyle
 \left(
 \alpha q\int_0^\infty
 M_p(f,r)^q e^{- \frac{\alpha q}{2} r^2}r\,dr
 \right)^{\frac{1}{q}},
 & 0<q<\infty,\\[3mm]
 \displaystyle
 \sup_{r\geq0}M_p(f,r)e^{-\frac{\alpha}{2} r^2},
 & q=\infty.
 \end{cases}
\]
The Gaussian radial--angular mixed-norm Fock space is
\[
 \mathcal F_\alpha^{p,q}
 :=
 \bigl\{
 f\in\mathcal H(\mathbb C):
 \|f\|_{\mathcal F_\alpha^{p,q}}<\infty
 \bigr\}.
\]
When $p=q$, we write
\[
 \mathcal F_\alpha^p=\mathcal F_\alpha^{p,p}.
\]
For the outer endpoint, we also use the little space
\[
 f_\alpha^{p,\infty}
 :=
 \left\{
 f\in\mathcal F_\alpha^{p,\infty}:
 M_p(f,r)e^{-\frac{\alpha}{2} r^2}\longrightarrow0
 \ \text{as } r\to\infty
 \right\}.
\]
For $\sigma\in\mathbb R$, we also write
$\mathcal F_{\alpha;\sigma}^{p,q}$ for the polynomially weighted
mixed-norm Fock space obtained by replacing $M_p(f,r)$ in the above
definition by
\[
 (1+r)^\sigma M_p(f,r).
\]
Thus $\mathcal F_{\alpha;0}^{p,q}=\mathcal F_\alpha^{p,q}$.

\smallskip
\noindent
Throughout the paper, we use the convention
\(
 \frac1\infty=0,
\)
and, for $0<p,q\leq\infty$, put
\[
 \nu(p,q):=\frac1q-\frac1p.
\]
For $k\in\mathbb N_0$, let
\[
 A_k:=\{z\in\mathbb C:k\leq |z|<k+1\}.
\]
If $0<p,q\leq\infty$ and $h$ is measurable on $\mathbb C$, define
\[
 \|h\|_{L_{\rm sh}^{p,q}(\mathbb C)}
 :=
 \begin{cases}
 \displaystyle
 \left(
 \sum_{k=0}^\infty
 \|h\|_{L^p(A_k,dA)}^q
 \right)^{\frac{1}{q}},
 & 0<q<\infty,\\[3mm]
 \displaystyle
 \sup_{k\geq0}\|h\|_{L^p(A_k,dA)},
 & q=\infty,
 \end{cases}
\]
where $dA$ denotes Lebesgue area measure on $\mathbb C$ and
\[
 \|h\|_{L^p(A_k,dA)}
 :=
 \begin{cases}
 \displaystyle
 \left(\int_{A_k}|h(z)|^p\,dA(z)\right)^{\frac{1}{p}},
 & 0<p<\infty,\\[3mm]
 \displaystyle
 \operatorname*{ess\,sup}_{z\in A_k}|h(z)|,
 & p=\infty.
 \end{cases}
\]
We denote by $L_{\rm sh}^{p,q}(\mathbb C)$ the space of measurable
functions for which this quasi-norm is finite.

\smallskip
\noindent
For a positive locally finite Borel measure $\mu$ on $\mathbb C$, define
\[
 T_\mu^\alpha f(z)
 :=
 \frac{\alpha}{\pi}
 \int_{\mathbb C}
 f(w)e^{\alpha z\overline w-\alpha|w|^2}\,d\mu(w).
\]
The paper concerns the matching-Gaussian setting, in which the source
space, the Toeplitz kernel, and the target space have the same Gaussian
parameter $\alpha$.

\smallskip
\noindent
For $R>0$, set
\[
 \widehat\mu_R(z):=\mu(B(z,R)),
 \qquad
 \widetilde\mu_\alpha(z)
 :=
 \int_{\mathbb C}e^{-\alpha|z-w|^2}\,d\mu(w).
\]
We refer to $\widehat\mu_R$ as the local-mass function and to
$\widetilde\mu_\alpha$ as the Gaussian Berezin profile.

\subsection{The Toeplitz operators}

For source and target pairs $(p_1,q_1)$ and $(p_2,q_2)$, write
\[
 \nu_j:=\nu(p_j,q_j)=\frac1{q_j}-\frac1{p_j},
 \qquad j=1,2,
\]
and define
\[
 \frac1{p_{12}}
 :=
 \left(\frac1{p_2}-\frac1{p_1}\right)_+,
 \qquad
 \frac1{q_{12}}
 :=
 \left(\frac1{q_2}-\frac1{q_1}\right)_+.
\]

\smallskip
\noindent
We separate the two cases
\[
 0<q_1\leq q_2\leq\infty \qquad \text{and} \qquad 0<q_2 < q_1\leq\infty.
\]
In the first range, $q_{12}=\infty$. Thus boundedness is expressed
by uniform control of the shell norms, while compactness requires
these norms to vanish at infinity.

\begin{theorem}[The case $q_1\leq q_2$]\label{thm:q1leq-q2}
Let $\alpha>0$,
\( 0<p_1,p_2\leq\infty\), \( 0<q_1\leq q_2\leq\infty\),
and let $\mu$ be a positive locally finite Borel measure on
$\mathbb C$. Then the following are equivalent:
\begin{enumerate}[label=\textup{(\roman*)}]
\item
The operator
\(
 T_\mu^\alpha:
 \mathcal F_\alpha^{p_1,q_1}
 \longrightarrow
 \mathcal F_\alpha^{p_2,q_2}
\)
is bounded.

\item
For some, equivalently every, $R>0$,
\(
 (1+|z|)^{\nu_2-\nu_1}\widehat\mu_R(z)
 \in L_{\rm sh}^{p_{12},\infty}(\mathbb C).
\)

\item
\(
 (1+|z|)^{\nu_2-\nu_1}\widetilde\mu_\alpha(z)
 \in L_{\rm sh}^{p_{12},\infty}(\mathbb C).
\)
\end{enumerate}
Moreover, for every fixed $R>0$,
\[ \|T_\mu^\alpha\|_{\mathcal F_\alpha^{p_1,q_1}
 \longrightarrow
 \mathcal F_\alpha^{p_2,q_2}}
 \asymp
 \big\|
 (1+|\cdot|)^{\nu_2-\nu_1}\widehat\mu_R
 \big\|_{L_{\rm sh}^{p_{12},\infty}}
 \asymp
 \big\|
 (1+|\cdot|)^{\nu_2-\nu_1}\widetilde\mu_\alpha
 \big\|_{L_{\rm sh}^{p_{12},\infty}}.
\]
Furthermore, for every fixed $R>0$,
\[
 \|T_\mu^\alpha\|_{\rm ess}
 \asymp
 \limsup_{k\to\infty}
 \left\|
 (1+|\cdot|)^{\nu_2-\nu_1}\widehat\mu_R
 \right\|_{L^{p_{12}}(A_k,dA)} \asymp \limsup_{k\to\infty}
 \left\|
 (1+|\cdot|)^{\nu_2-\nu_1}\widetilde\mu_\alpha
 \right\|_{L^{p_{12}}(A_k,dA)}.
\]
In particular,
$T_\mu^\alpha$ is compact if and only if
\[
 \lim_{k \to\infty}
 \left\|
 (1+|\cdot|)^{\nu_2-\nu_1}\widehat\mu_R
 \right\|_{L^{p_{12}}(A_k,dA)}
 =0.
\]
This condition is independent of the choice of $R>0$, and is equivalent to
\[
 \lim_{k \to\infty}\left\|
 (1+|\cdot|)^{\nu_2-\nu_1}\widetilde\mu_\alpha
 \right\|_{L^{p_{12}}(A_k,dA)}
 = 0.
\]
The constants in the norm and essential-norm comparisons depend on $\alpha,p_1,q_1,p_2,q_2$ and the fixed
radius $R$.

\end{theorem}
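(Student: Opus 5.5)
We first dispose of the equivalence (ii)$\Leftrightarrow$(iii) together with the independence of $R$; this is purely measure-theoretic. Covering $B(z,R')$ by boundedly many discs $B(z_j,R)$ with $|z_j-z|\le R'$ gives $\widehat\mu_{R'}(z)\lesssim\sum_j\widehat\mu_R(z_j)$. Since $(1+|z|)^{\nu_2-\nu_1}$ is comparable on $R'$-neighbourhoods and a translate by $\le R'$ moves a shell $A_k$ into $A_{k-R'}\cup\dots\cup A_{k+R'}$, shell $L^{p_{12}}$ norms of $\widehat\mu_{R'}$ are bounded by finitely many neighbouring shell norms of $\widehat\mu_R$. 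For $p_{12}<1$ we use the $p_{12}$-triangle inequality. The reverse inequality is trivial. Clearly $\widehat\mu_R\le e^{\alpha R^2}\widetilde\mu_\alpha$. Conversely,
\[
\widetilde\mu_\alpha(z)\lesssim\sum_{m\in\mathbb Z^2}e^{-\alpha(|m|-1)_+^2}\widehat\mu_1(z+m).
\]
The Gaussian weight beats the polynomial weight $(1+|z+m|)^{|\nu_2-\nu_1|}(1+|z|)^{-\cdots}\lesssim(1+|m|)^{C}$ and the shift of shells by $|m|$. Summing the resulting quasi-norms (via the $p_{12}$-convexity of $\ell^{p_{12}}$ sums when $p_{12}<1$) gives (iii) from (ii). The same estimates, applied with $\limsup_k$, show that the two $\limsup$ quantities in the essential-norm statement are comparable.

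For sufficiency (ii)$\Rightarrow$(i) we discretize. Fix a lattice $\{z_j\}$ of mesh $\asymp R$ and split
\[
\mu=\sum_j\mu|_{Q_j}.
\]
Subharmonicity gives
\[
|f(w)|e^{-\frac\alpha2|w|^2}\lesssim\Big(\int_{B(z_j,2R)}|f|^se^{-\frac{s\alpha}2|\cdot|^2}\Big)^{1/s}=:F_j
\]
for $w\in Q_j$, with $s$ small. Hence
\[
|T_\mu^\alpha f(z)|e^{-\frac\alpha2|z|^2}\lesssim\sum_j\widehat\mu_R(z_j)F_j\,e^{-\frac\alpha2|z-z_j|^2},
\]
a localized Gaussian matrix applied to the sequence $(\widehat\mu_R(z_j)F_j)$. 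From the companion paper \cite{FT} we take the following facts:
\begin{itemize}
\item the sequence $(F_j)$ lies in the discrete space $\ell^{p_1,q_1}$, where shell $k$ contains $\asymp k$ points and the angular $\ell^{p}$ sum carries the normalization $k^{-1/p}$, which is the source of the factor $(1+|z|)^{\nu}$;
\item Gaussian-decaying matrices are bounded on these sequence spaces;
\item for $p<1$ the $p$-triangle inequality handles the superposition.
\end{itemize}
It then remains to show that multiplication by $(\widehat\mu_R(z_j))$ maps $\ell^{p_1,q_1}\to\ell^{p_2,q_2}$. On each shell this is Hölder's inequality with $1/p_2=1/p_1+1/p_{12}$ when $p_2<p_1$, and the inclusion $\ell^{p_1}\subset\ell^{p_2}$ otherwise. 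The normalizations produce exactly the factor $k^{\nu_2-\nu_1}$, with the discrete shell norm comparable to the $L^{p_{12}}(A_k)$ norm of $\widehat\mu_R$. Radially, $q_1\le q_2$ allows the $\sup_k$ of the shell multipliers, i.e. $L^{p_{12},\infty}_{\rm sh}$.

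For necessity (i)$\Rightarrow$(ii) we test on atoms. Let
\[
f=\sum_j c_j\,r_j(t)\,k_{z_j},\qquad k_{z_j}=e^{\alpha\bar z_jz-\frac\alpha2|z_j|^2},
\]
with Rademacher signs $r_j(t)$, and let $(c_j)$ range over $\ell^{p_1,q_1}$ supported on one shell $A_k$; by the atomic decomposition of \cite{FT}, $\|f\|\lesssim\|c\|_{\ell^{p_1,q_1}}$. Averaging $\|T_\mu^\alpha f\|^{p_2}$ (or the appropriate power, via Khintchine/Kahane) over $t$ gives a square-function lower bound. Positivity of $\mu$ then gives
\[
\Big(\sum_j|c_j|^2|T_\mu k_{z_j}|^2\Big)^{1/2}\gtrsim|c_j|\,\widehat\mu_R(z_j)\,e^{\frac\alpha2|z|^2}\quad\text{near }z_j,
\]
using $T_\mu k_{z_j}(z_j)\,e^{-\frac\alpha2|z_j|^2}\gtrsim\widehat\mu_R(z_j)$. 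This yields $\|(c_j\widehat\mu_R(z_j))\|_{\ell^{p_2}(\text{shell }k)}k^{-1/p_2+1/q_2}\lesssim\|T\|\,\|c\|\,k^{-1/p_1+1/q_1}$. Dualizing over $c$ recovers $\|\widehat\mu_R\|_{L^{p_{12}}(A_k)}k^{\nu_2-\nu_1}\lesssim\|T\|$ uniformly in $k$. For $p_2=\infty$ we instead take a single atom.

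For the essential norm, the upper bound comes from splitting $\mu=\mu\chi_{|w|<N}+\mu\chi_{|w|\ge N}$. The first piece gives a compact operator, since it factors through restriction to a disc and compactly supported measures give compact maps. The second is bounded by the tail $\sup_{k\ge N-1}$ shell norm. For the lower bound we use atoms on shells $k\to\infty$, which tend to zero weakly (or uniformly on compacts). For a compact $K$ we have $\|(T-K)f_k\|\ge\|Tf_k\|-o(1)$, so the lower bound is the $\limsup$. In the quasi-Banach case $p<1$, where weak convergence is unavailable, we replace it by the fact that compact operators map such normalized, uniformly-on-compacts null, bounded families to norm-null sequences; verifying this is the delicate point.

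The main obstacle is the necessity direction in the quasi-Banach and endpoint ranges. The Rademacher extraction must produce an $\ell^{p_2}$ lower bound on each shell with the correct angular normalization $k^{-1/p_2}$ when $p_2<1$ or $p_2=\infty$, and the duality step for $p_{12}$ must be handled carefully.
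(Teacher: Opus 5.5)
\paragraph{The gap: the essential-norm lower bound.} Your treatment of (ii)$\Leftrightarrow$(iii), of sufficiency, and of the essential-norm upper bound is sound in outline. For sufficiency you use local suprema $F_j$ of $f$ instead of the paper's atomic-recovery coefficients, which is a legitimate simplification. Your Khintchine argument is a workable alternative to the paper's lattice restriction with Rademacher diagonal extraction. The genuine gap is the essential-norm lower bound, and with it the implication ``compact $\Rightarrow$ vanishing shell tail''. You intend to use the fact that a compact operator maps bounded, locally uniformly null families to norm-null sequences. This is false in part of the stated range, already for Banach spaces.

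Take $p_1=q_1=1$. Then $\nu_1=0$, and for $p_2\ge1$ one has $p_{12}=\infty$, so your tests are single atoms. Choose a sparse sequence $z_j\to\infty$ with $\sum_{i\ne j}e^{-\frac\alpha2|z_i-z_j|^2}\le\frac12$ for every $j$, and set $g:=\sum_i\kappa_{\alpha,z_i}$.
\begin{itemize}
\item Since $|g(z)|e^{-\frac\alpha2|z|^2}\lesssim1$, the map $f\mapsto\langle f,g\rangle_\alpha$ is bounded on $\mathcal F_\alpha^{1}$.
\item On the other hand, $|\langle\kappa_{\alpha,z_j},g\rangle_\alpha|=|g(z_j)|e^{-\frac\alpha2|z_j|^2}\ge\frac12$.
\item Hence $Kf:=\langle f,g\rangle_\alpha\cdot1$ is a compact rank-one operator with $\|K\kappa_{\alpha,z_j}\|\ge\frac12\|1\|$ for all $j$, although $\kappa_{\alpha,z_j}$ is bounded and tends to $0$ locally uniformly.
\end{itemize}
So $\|(T_\mu^\alpha-K)f_k\|\ge\|T_\mu^\alpha f_k\|-o(1)$ fails for this $K$. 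The obstruction is not $p<1$ but $q_1\le1$. In that case the outer $\ell^{q_1}$ structure carries bounded functionals of size $\gtrsim1$ on every shell test, so the tests are not weakly null. For $q_1>1$ your argument does work: the sparse-shell tests are weakly null, and point evaluations separate the points of the target.

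\paragraph{The missing idea.} Do not ask that $Kf_j$ be small. Use instead that $(Kf_j)$ has a Cauchy subsequence while the images $T_\mu^\alpha f_j$ are pairwise separated. Let $E_*$ denote the $\limsup$ tail of the shell profile.
\begin{enumerate}
\item Choose sparse shells $k_j$ on which the shell profile is $\gtrsim E_*$.
\item By averaging, fix deterministic signs for which the part of $T_\mu^\alpha f_j$ on shell $k_j$ has size $\gtrsim E_*$.
\item Your sufficiency majorant gives Gaussian off-diagonal decay for single-shell inputs. Use it to make $T_\mu^\alpha f_i$ negligible on shell $k_j$ for $i\ne j$. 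This gives $\|T_\mu^\alpha f_i-T_\mu^\alpha f_j\|\gtrsim E_*$.
\item With $s=\min\{1,p_2,q_2\}$, the powered quasi-triangle inequality yields $E_*^{s}\lesssim2\|T_\mu^\alpha-K\|^{s}+\|Kf_i-Kf_j\|^{s}$.
\item Letting $i,j\to\infty$ along the Cauchy subsequence gives $\|T_\mu^\alpha-K\|\gtrsim E_*$, uniformly in $K$.
\end{enumerate}
This is exactly the paper's argument, carried out on the sampling lattice.

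\paragraph{A smaller point in your necessity step.} You claim the pointwise bound $|T_\mu^\alpha\kappa_{\alpha,z_j}(z)|e^{-\frac\alpha2|z|^2}\gtrsim\widehat\mu_R(z_j)$ near $z_j$. This does not follow from positivity, which only helps at $z=z_j$. For example, with $\mu=e^{-\alpha D^2}\delta_{z_j}+\delta_{z_j+D}$ and $D>R$ real, the left side vanishes at distance $\pi/(\alpha D)$ from $z_j$.

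What does hold, and suffices after Khintchine, is the integrated bound $\int_{B(z_j,\delta)}|T_\mu^\alpha\kappa_{\alpha,z_j}|^{p_2}e^{-\frac{p_2\alpha}{2}|z|^2}\,dA\gtrsim\widetilde\mu_\alpha(z_j)^{p_2}$. To get it, set $G_j(u):=\frac\alpha\pi\int e^{-\alpha|w-z_j|^2+\alpha u\overline{(w-z_j)}}\,d\mu(w)$. This function satisfies $|G_j(u)|e^{-\frac\alpha2|u|^2}=|T_\mu^\alpha\kappa_{\alpha,z_j}(z_j+u)|e^{-\frac\alpha2|z_j+u|^2}$. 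The bound then follows from the mean-value inequality for $|G_j|^{p_2}$.
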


\smallskip
\noindent
In the second range, $q_{12}<\infty$, and boundedness and compactness
of $T_\mu^\alpha$ are equivalent.

\begin{theorem}[The case \(q_2 < q_1\)]
\label{thm:q2lt-q1}
Let $\alpha>0$, \( 0<p_1,p_2\leq\infty\), \( 0<q_2 < q_1\leq\infty\), and let $\mu$ be a positive locally finite Borel
measure on $\mathbb C$. The following assertions are equivalent:
\begin{enumerate}[label=\textup{(\roman*)}]
\item
The operator
\(
 T_\mu^\alpha:
 \mathcal F_\alpha^{p_1,q_1}
 \longrightarrow
 \mathcal F_\alpha^{p_2,q_2}
\) is bounded.

\item
The operator
\(
 T_\mu^\alpha:
 \mathcal F_\alpha^{p_1,q_1}
 \longrightarrow
 \mathcal F_\alpha^{p_2,q_2}
\)
is compact.

\item
For some, equivalently every, $R>0$,
\(
 (1+|z|)^{\nu_2-\nu_1}\widehat\mu_R(z)
 \in L_{\rm sh}^{p_{12},q_{12}}(\mathbb C).
\)

\item
\(
 (1+|z|)^{\nu_2-\nu_1}\widetilde\mu_\alpha(z)
 \in L_{\rm sh}^{p_{12},q_{12}}(\mathbb C).
\)
\end{enumerate}
Moreover, for every fixed $R>0$,
\[
 \|T_\mu^\alpha\|_{\mathcal F_\alpha^{p_1,q_1}
 \longrightarrow
 \mathcal F_\alpha^{p_2,q_2}}
 \asymp
 \big\|
 (1+|\cdot|)^{\nu_2-\nu_1}\widehat\mu_R
 \big\|_{L_{\rm sh}^{p_{12},q_{12}}}
 \asymp
 \big\|
 (1+|\cdot|)^{\nu_2-\nu_1}\widetilde\mu_\alpha
 \big\|_{L_{\rm sh}^{p_{12},q_{12}}}.
\]
The constants in the norm comparisons depend on
$\alpha,p_1,q_1,p_2,q_2$ and the fixed radius $R$.
\end{theorem}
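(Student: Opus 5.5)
We first reduce everything to a discrete problem on the lattice $\Lambda=r\mathbb Z^2$, with $r$ small relative to $R$. The two profiles are interchangeable by elementary estimates. For $R'>0$ there is $C$ with $\widehat\mu_R(z)\le C\sum_{\lambda\in B(z,R+r)}\widehat\mu_{r}(\lambda)$. Also $\widetilde\mu_\alpha(z)\ge e^{-\alpha R^2}\widehat\mu_R(z)$, and
\[
\widetilde\mu_\alpha(z)\le \sum_{\lambda\in\Lambda}e^{-\alpha(|z-\lambda|-r)_+^2}\widehat\mu_r(\lambda).
\]
Since the weight $(1+|z|)^{\nu_2-\nu_1}$ is essentially constant on unit discs, and the Gaussian tail is summable against the shell quasi-norm $L^{p_{12},q_{12}}_{\rm sh}$, we get $\|\cdot\|_{L^{p_{12},q_{12}}_{\rm sh}}$-equivalence of $(1+|\cdot|)^{\nu_2-\nu_1}\widehat\mu_R$ for all $R$ and of $(1+|\cdot|)^{\nu_2-\nu_1}\widetilde\mu_\alpha$. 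For $p_{12}<1$ or $q_{12}<1$ we use the $p$-triangle inequality with the Gaussian decay absorbing the shifts. The same estimates localize to shells with $k\to\infty$, which handles the compactness and essential-norm statements. It thus suffices to work with the sequence $a_\lambda:=(1+|\lambda|)^{\nu_2-\nu_1}\mu(B(\lambda,r))$, whose shell norm is equivalent to that of $\widehat\mu_R$.

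\textbf{Sufficiency.} Write
\[
T_\mu^\alpha f=\sum_\lambda\frac{\alpha}{\pi}\int_{Q_\lambda}f(w)e^{\alpha z\bar w-\alpha|w|^2}\,d\mu(w),
\]
where the $Q_\lambda$ are lattice squares. Then
\[
|T_\mu^\alpha f(z)|e^{-\alpha|z|^2/2}\lesssim\sum_\lambda e^{-\frac\alpha2|z-\lambda|^2+O(1)|z-\lambda|}\,\mu(Q_\lambda)\,\sup_{Q_\lambda}|f|e^{-\alpha|w|^2/2}.
\]
Using the sharp local growth estimate from the companion paper \cite{FT}, namely that the sequence $\sup_{B(\lambda,r)}|f|e^{-\alpha|\cdot|^2/2}$ lies in the discrete mixed space $\ell^{p_1,q_1}$ (shell-wise $\ell^{p_1}$ with the factor $(1+k)^{-\nu_1}$ normalization, i.e.\ $\lambda$-counts $\sim k$ per shell) with norm $\lesssim\|f\|$, the problem becomes a positive Gaussian matrix operator acting from one weighted discrete mixed space to another, composed with multiplication by $\mu(Q_\lambda)$. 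The Gaussian matrix is bounded on every discrete mixed space, and it is localized within $O(1)$ shells. Hölder in the angular variable with exponent $p_{12}$, and in the radial variable with $q_{12}$, then gives the norm bound $\lesssim\|a\|_{\ell^{p_{12},q_{12}}_{\rm sh}}$. Here the factor $(1+k)^{\nu_2-\nu_1}$ arises from converting $M_p$ averages on circles to counting on the roughly $k$ lattice points of $A_k$, with the radial factor $r\,dr$. When $p_2<1$ or $q_2<1$ we apply the $p_2$-convexity pointwise before summing instead. For compactness, $T_{\mu\chi_{|w|>N}}$ has norm controlled by the tail, and $T_{\mu\chi_{|w|\le N}}$ is compact. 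This gives the upper essential-norm bound in Theorem \ref{thm:q1leq-q2} and compactness under the summability hypothesis in Theorem \ref{thm:q2lt-q1}.

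\textbf{Necessity.} This is the main obstacle. Test $T_\mu^\alpha$ on atomic sums $f_t=\sum_\lambda c_\lambda r_\lambda(t)k_\lambda$, where the $k_\lambda$ are normalized reproducing kernels and the $r_\lambda$ are Rademacher functions. By the atomic decomposition in \cite{FT}, $\|f_t\|\lesssim\|c\|_{\ell^{p_1,q_1}}$. Pair $T_\mu f_t$ against $g_s=\sum d_\lambda r_\lambda(s)k_\lambda$ in the dual or, for quasi-Banach targets, use Khinchine together with the lower local estimate $|F(\lambda)|e^{-\alpha|\lambda|^2/2}\lesssim$ local norm. Averaging over $t$ via Khinchine–Kahane,
\[
\int_{\mathbb C}\Bigl(\sum_\lambda|c_\lambda|^2|k_\lambda|^2\Bigr)e^{-\alpha|w|^2}\,d\mu(w)
\]
is dominated, which retains the diagonal $\sum_\lambda|c_\lambda|^2\mu(B(\lambda,r))$ up to positivity. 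More precisely, we extract $\sum_\lambda |c_\lambda|^{s}\mu(B(\lambda,r))^{s'}$-type lower bounds via positivity of the Toeplitz form $\langle T_\mu k_\lambda,k_\lambda\rangle\gtrsim\mu(B(\lambda,r))$ after choosing a sparse sublattice to kill cross terms. Then the duality $(\ell^{p_1,q_1}\to\ell^{p_2,q_2})$ multipliers $=\ell^{p_{12},q_{12}}$ (again separately in angular and radial indices) yields $\|a\|_{\ell^{p_{12},q_{12}}_{\rm sh}}\lesssim\|T_\mu^\alpha\|$. The difficulty is exactly making this multiplier identification rigorous when $p_2$ or $q_2<1$, or at infinity, where duality fails; there we plan to use $|T_\mu F(\lambda)|$ directly at lattice points and choose $c_\lambda$ as extremizers of the discrete multiplier problem. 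For the essential-norm lower bound we apply the same argument with $c$ supported on shells $k\to\infty$: the test functions tend to $0$ weakly, so compact perturbations do not affect the bound. In the case $q_2<q_1$ with $q_{12}<\infty$, boundedness forces summability, hence compactness by the previous part.
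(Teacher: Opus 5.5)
The profile comparisons, sufficiency, compactness, and the Banach-target case of necessity are fine. Your sufficiency route differs from the paper's: you dominate $|f|e^{-\frac{\alpha}{2}|\cdot|^2}$ on each cell by its supremum, whose weighted lattice sequence lies in $\ell^{p_1,q_1}$, so only one Gaussian sum follows $M_m$. The paper instead uses atomic recovery and the sandwich $GM_mG|d|$. Both work. For Banach targets, your pairing must use the \emph{same} signs in $f_t$ and $g_t$; with independent $t,s$ the expectation also kills the diagonal. After that, factorization and finite norming finish.

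The gap is necessity when $\min\{p_2,q_2\}<1$, which lies inside the theorem. Every mechanism you list there only sees the sesquilinear form: averaging $\int|f_t|^2e^{-\alpha|w|^2}\,d\mu$, positivity of $\langle T_\mu^\alpha k_\lambda,k_\lambda\rangle$, and pairing $T_\mu^\alpha f_t$ with $g_t$. In this range the form carries strictly less information than the operator. For example, take $p_1=q_1=\infty$ and $p_2=q_2=\frac12$. The theorem requires $\widehat\mu_R\in L^{1/2}(\mathbb C)$. Pairing against $\mathcal F_\alpha^\infty$, the dual of $\mathcal F_\alpha^{1/2}$ under $\langle\cdot,\cdot\rangle_\alpha$, yields at most $\widehat\mu_R\in L^1(\mathbb C)$, i.e.\ $\mu(\mathbb C)<\infty$ (compare Theorem~\ref{thm:form}). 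So no refinement of the duality route reaches the sharp exponents.

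Your deterministic fallback, evaluating $T_\mu^\alpha F$ at lattice points with extremal coefficients, also fails. The lattice testing matrix has entries proportional to $\int\kappa_{\alpha,\lambda}(w)e^{\alpha\gamma\overline w-\alpha|w|^2}\,d\mu(w)$. Its off-diagonal entries are complex, and nothing prevents them from cancelling the diagonal term. A sparse-sublattice absorption would need an a priori finite bound on $\|m\|_{\ell^{p_{12},q_{12}}}$, which is exactly what you are trying to prove.

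You also misplace the difficulty. The identity $\|M_m\|_{\ell^{p_1,q_1}\to\ell^{p_2,q_2}}=\|m\|_{\ell^{p_{12},q_{12}}}$ holds for all exponents by blockwise H\"older and explicit extremizers, with no duality. What is missing is a way to pass from boundedness of $B=R_{\alpha,\Omega}^{p_2,q_2}T_\mu^\alpha S_{\alpha,\Omega}^{p_1,q_1}$ to boundedness of its diagonal multiplier $M_d$, where $d_\lambda=\frac{\alpha}{\pi}(1+|\lambda|)^{\nu_2-\nu_1}\widetilde\mu_\alpha(\lambda)$. This must be done inside the target quasi-norm itself.

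Take $x$ finitely supported and independent signs $\varepsilon$. Then $(Bx^\varepsilon)_\omega=\varepsilon_\omega d_\omega x_\omega+y_\omega$, with $y_\omega$ independent of $\varepsilon_\omega$. Set $s=\min\{1,p_2,q_2\}$. The two-point inequality $2^s|a|^s\le|a+y|^s+|{-a}+y|^s$ gives
\[
 |d_\omega x_\omega|^s\le 2^{1-s}\,\mathbb E_\varepsilon\,|(Bx^\varepsilon)_\omega|^s .
\]
Since $\|z\|_{\ell^{p_2,q_2}}^s=\|(|z_\omega|^s)_{\omega}\|_{\ell^{p_2/s,\,q_2/s}}$ is a Banach lattice norm, Minkowski moves $\mathbb E_\varepsilon$ outside. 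This gives $\|M_dx\|_{\ell^{p_2,q_2}}\lesssim\|B\|\,\|x\|_{\ell^{p_1,q_1}}$ (Lemma~\ref{lem:diagonal-extraction}).

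Khinchine's lower bound, applied coordinatewise to $(Bx^\varepsilon)_\omega$, would serve equally well. It only works combined with this convexified Minkowski step, though. Your outline applies Khinchine inside the $\mu$-integral instead, which lands back on the form. Once $M_d$ is bounded, the multiplier identity and $\widehat\mu_R\le e^{\alpha R^2}\widetilde\mu_\alpha$ finish the proof.
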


\subsection{Mappings involving the little outer endpoint}

The preceding results determine boundedness and compactness between the
large mixed-norm Fock spaces. We now consider what changes when one of
the outer-endpoint spaces is replaced by its little counterpart
$f_\alpha^{p,\infty}$.

\smallskip
\noindent
There are two natural questions. First, we compare mappings with little
outer-endpoint source to the corresponding mappings with large source.
Second, we consider mappings whose range is required to lie in the
little outer-endpoint space. The latter exhibits a genuine distinction
between finite and infinite outer source exponents.

\begin{theorem}[Little outer-endpoint source]
\label{thm:little-source}
Let $\alpha>0$, $0<p_1,p_2,q_2\leq\infty$, and let $\mu$ be a positive
locally finite Borel measure on $\mathbb C$. Then the operator
\[
 T_\mu^\alpha:
 f_\alpha^{p_1,\infty}
 \longrightarrow
 \mathcal F_\alpha^{p_2,q_2}
\]
has the same boundedness  and compactness criteria as the corresponding operator
\[
 T_\mu^\alpha:
 \mathcal F_\alpha^{p_1,\infty}
 \longrightarrow
 \mathcal F_\alpha^{p_2,q_2}.
\]
If $q_2=\infty$, the same criteria also characterize the operator
\[
 T_\mu^\alpha:
 f_\alpha^{p_1,\infty}
 \longrightarrow
 f_\alpha^{p_2,\infty}.
\]
Moreover, whenever these operators are bounded, the corresponding
norm estimates in the preceding theorems remain valid.
\end{theorem}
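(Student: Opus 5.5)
The plan is to reduce everything to Theorems~\ref{thm:q1leq-q2} and~\ref{thm:q2lt-q1} with $q_1=\infty$. Two facts about $f_\alpha^{p_1,\infty}$ make this possible. The testing functions used in the necessity parts of those theorems, namely normalized reproducing kernels and finite atomic sums $\sum_j c_j k_{w_j}$, already lie in the little space. Conversely, the operator on the large space can be recovered from its restriction to the little space.

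\textbf{Sufficiency.} Since $f_\alpha^{p_1,\infty}\subset\mathcal F_\alpha^{p_1,\infty}$ isometrically, boundedness or compactness of $T_\mu^\alpha$ on the large source restricts to the little source. The norm estimate $\lesssim$ and the essential-norm upper bound pass to the restriction. This uses the general fact that the essential norm of a restriction to a closed subspace is at most that of the operator; in the quasi-Banach case I would work with the infimum of $\|T-K\|$ over compact $K$ and restrict $K$.

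\textbf{Necessity.} I would inspect the proofs of the lower bounds in the preceding theorems. Those proofs proceed by atomic recovery and Rademacher extraction: one tests $T_\mu^\alpha$ on finite sums $\sum_{j\le N} c_j r_j(t)\,e^{\alpha\overline{w_j}z-\alpha|w_j|^2/2}$ over a lattice, averages in $t$, and reads off the weighted local masses. A finite sum of normalized kernels is entire with $M_{p_1}(f,r)e^{-\alpha r^2/2}\to0$, since each kernel has Gaussian decay of $M_{p}(k_w,r)e^{-\alpha r^2/2}$ in $r$. Each such sum therefore lies in $f_\alpha^{p_1,\infty}$ with the same norm. The lower bounds on $\|(1+|\cdot|)^{\nu_2-\nu_1}\widehat\mu_R\|$ then hold with $\|T_\mu^\alpha\|_{f_\alpha^{p_1,\infty}\to\mathcal F^{p_2,q_2}_\alpha}$ on the left, after letting $N\to\infty$ by monotone convergence in the local-mass sums.

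For compactness and the essential norm, the lower bound uses kernels $k_{w}$ with $|w|\to\infty$, or blocks of atoms supported on shells $A_k$ with $k\to\infty$. These tend to $0$ weakly, or at least are mapped to $0$ by any compact operator, and they already belong to the little space. So the limsup lower bounds persist. For $q_2<q_1=\infty$, boundedness on the little space gives the $L^{p_{12},q_{12}}_{\rm sh}$ condition, and Theorem~\ref{thm:q2lt-q1} then gives compactness on the large space, hence on the little one.

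\textbf{Little target when $q_2=\infty$.} Assume the criterion of Theorem~\ref{thm:q1leq-q2} with $q_1=q_2=\infty$, so $q_{12}=\infty$ and the condition is a uniform bound on shells. I must show that $T_\mu^\alpha f\in f_\alpha^{p_2,\infty}$ for $f\in f_\alpha^{p_1,\infty}$. For polynomials $f$ (or finite kernel sums), the function $T_\mu^\alpha f$ lies in the little space by an explicit localization: the Gaussian matrix estimates show that the contribution at radius $r$ is dominated by $\sum_k e^{-c(k-r)^2}\,\|\cdots\|_{A_k}\,\sup_{A_k}(\ldots)\,M_{p_1}(f,k)e^{-\alpha k^2/2}$, with shell norms bounded. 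This tends to $0$ whenever $M_{p_1}(f,k)e^{-\alpha k^2/2}\to0$, and the argument works directly for any $f\in f^{p_1,\infty}_\alpha$.

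I expect the main obstacle to be making this localization rigorous uniformly in $(p_1,p_2)$, including $p_{12}=\infty$ and $p<1$. My first attempt is to rerun the sufficiency estimate of Theorem~\ref{thm:q1leq-q2} with the sequence $a_k:=\sup_{j\ge k}M_{p_1}(f,j)e^{-\alpha j^2/2}$ replaced by a tail-truncated version $f\chi_{|w|>K}$. This splits $T_\mu^\alpha f$ into a part with small $\mathcal F^{p_2,\infty}_\alpha$ norm plus a part coming from a compactly supported measure, which is trivially in the little space. Closedness of $f^{p_2,\infty}_\alpha$ then finishes the argument.
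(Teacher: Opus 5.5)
Your boundedness argument is the paper's: restriction gives sufficiency, and for necessity finite normalized-kernel sums lie in $f_\alpha^{p_1,\infty}$, so finite-set Rademacher extraction on $C^\mu$ and exhaustion by finite unions of shells apply. The case $q_2<\infty$ of compactness also matches. Two steps take different routes.

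\textbf{Compactness lower bound at $q_2=\infty$.} The paper uses no functionals. It selects sparse shells so that Proposition~\ref{prop:toeplitz-testing-matrix}(c) makes the vectors $C^\mu x^{(j)}$ pairwise separated, and then $(T_\mu^\alpha f_j)$ has no Cauchy subsequence. Your claim that compact operators send the single-shell tests to $0$ is true. In the quasi-Banach range, however, it must be proved rather than quoted from the Banach-space theory.

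Here is one way. Let $\phi$ be a continuous functional on the source. If $|\phi(f_j)|\ge\epsilon$ for infinitely many $j$, take a finite set $J$ of such indices and phases $\theta_j$. The vector $\sum_{j\in J}e^{i\theta_j}x^{(j)}$ has $\ell^{p_1,\infty}$ quasi-norm at most $1$, because the shells are disjoint and the outer norm is a supremum. Since $S_{\alpha,\Omega}^{p_1,\infty}$ is bounded, the corresponding sum of the $f_j$ is uniformly bounded, yet $\phi$ of it is at least $|J|\epsilon$; hence $\phi(f_j)\to0$. Any norm cluster point of $(Kf_j)$ must then vanish, because point evaluations are continuous and separate points of $\mathcal F_\alpha^{p_2,q_2}$. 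This route avoids sparse selection and the off-diagonal decay of $C^\mu$, and it gives the essential-norm lower bound directly. The paper's route needs no duality at all.

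\textbf{Little target.} The paper argues as follows:
\begin{enumerate}
\item It maps $f\in f_\alpha^{p_1,\infty}$ to $x=R_{\alpha,\Omega}^{p_1,\infty}f\in\ell_0^{p_1,\infty}$.
\item It writes $R_{\alpha,\Omega}^{p_2,\infty}T_\mu^\alpha f=D^\mu x$ via Proposition~\ref{prop:endpoint-reconstruction}.
\item It checks that each factor of $GM_mK_L$ preserves the little endpoint.
\item It concludes with \eqref{eq:little-endpoint-characterization}.
\end{enumerate}

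Your truncation-plus-closedness scheme is sound in principle; the compactly supported part is bounded by $Ce^{\alpha K|z|}$. However, smallness of the tail part does not follow from literally rerunning Subsection~\ref{subsec:large-boundedness}. That estimate runs through the pocket coefficients $A_\delta^{p_1,\infty}f$, for which no mapping into $\ell_0^{p_1,\infty}$ is available. Also, $f\chi_{\{|w|>K\}}$ is not an admissible input, since it is not entire.

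What makes your localization rigorous is to replace atoms by cellwise suprema
\[
\tilde d_\omega:=(1+|\omega|)^{\nu_1}\sup_{Q_\omega}|f|e^{-\frac{\alpha}{2}|\cdot|^2}.
\]
The submean argument in the proof of Proposition~\ref{prop:w-carleson} gives $\|\tilde d|_{\Omega_k}\|_{\ell^{p_1}}\lesssim\max_{|j-k|\le J}B_{p_1}(f;j)\to0$. With $m$ as in \eqref{eq-weighted-local-mass}, one also has the pointwise bound
\[
|T_\mu^\alpha f(z)|e^{-\frac{\alpha}{2}|z|^2}(1+|z|)^{\nu_2}\lesssim\sum_\omega e^{-c|z-\omega|^2}m_\omega\tilde d_\omega .
\]
The shell convolution from Step~2 of Subsection~\ref{subsec:large-boundedness} then gives $B_{p_2}(T_\mu^\alpha f;\ell)\to0$ directly. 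This holds for all exponents, including $p<1$ and $p_{12}=\infty$, and needs no truncation. This one-Gaussian route is more elementary; the paper's route instead reuses the sampling and reconstruction machinery it needs anyway for Theorem~\ref{thm:large-to-little}.
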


\begin{theorem}[Large-to-little outer endpoint]
\label{thm:large-to-little}
Let $\alpha>0$, $0<p_1,p_2\leq\infty$, and let $\mu$ be a positive
locally finite Borel measure on $\mathbb C$.
Then 
\begin{enumerate}[label=\textup{(\alph*)}]
    \item For \(0< q_1 < \infty\), the operator
\[
 T_\mu^\alpha: \mathcal F_\alpha^{p_1,q_1}
 \longrightarrow
  f_\alpha^{p_2,\infty}
\]
has the same boundedness  and compactness criteria as the corresponding operator
\[
 T_\mu^\alpha:
 \mathcal F_\alpha^{p_1,q_1}
 \longrightarrow
 \mathcal F_\alpha^{p_2,\infty}.
\]
\item For \(q_1 = \infty\),
the following assertions are equivalent:
\begin{enumerate}[label=\textup{(\roman*)}]
\item
The operator \(
 T_\mu^\alpha:
 \mathcal F_\alpha^{p_1,\infty}
 \longrightarrow
 f_\alpha^{p_2,\infty}
\)
is bounded.

\item
The operator \(
 T_\mu^\alpha:
 \mathcal F_\alpha^{p_1,\infty}
 \longrightarrow
 f_\alpha^{p_2,\infty}
\)
is compact.

\item
For some, equivalently every, $R>0$,
\(
 \displaystyle \lim_{k \to \infty}\left\|
 (1+|\cdot|)^{\frac{1}{p_1}-\frac{1}{p_2}}\widehat\mu_R
 \right\|_{L^{p_{12}}(A_k,dA)} = 0.
\)

\item
\(
 \displaystyle
 \lim_{k\to\infty} \left\|
 (1+|\cdot|)^{\frac{1}{p_1}-\frac{1}{p_2}}\widetilde\mu_\alpha
 \right\|_{L^{p_{12}}(A_k,dA)} = 0.
\)
\end{enumerate}
\end{enumerate}
In both cases, whenever the operator is bounded, its norm satisfies
the same estimates as the corresponding mapping into
$\mathcal F_\alpha^{p_2,\infty}$.
\end{theorem}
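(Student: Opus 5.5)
Part (a) reduces to the large-target results, and part (b) is a compactness characterization obtained from Theorem~\ref{thm:q1leq-q2} with $q_1=q_2=\infty$. Throughout we use Theorems~\ref{thm:q1leq-q2} and~\ref{thm:q2lt-q1}, the facts (from \cite{FT}) that polynomials are dense in $\mathcal F_\alpha^{p,q}$ for $q<\infty$ and in $f_\alpha^{p,\infty}$, and the normalized kernels $k_a(z)=e^{\alpha z\bar a-\alpha|a|^2/2}$, whose $\mathcal F_\alpha^{p,q}$-norms are $\asymp(1+|a|)^{-\nu(p,q)}$ times a constant.

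\textbf{Part (a).} Since $f_\alpha^{p_2,\infty}$ is a closed subspace of $\mathcal F_\alpha^{p_2,\infty}$ carrying the same quasi-norm, boundedness or compactness into $f_\alpha^{p_2,\infty}$ implies the same into $\mathcal F_\alpha^{p_2,\infty}$, with the same norm. For the converse, assume $T_\mu^\alpha:\mathcal F_\alpha^{p_1,q_1}\to\mathcal F_\alpha^{p_2,\infty}$ is bounded. It suffices to show that $T_\mu^\alpha f\in f_\alpha^{p_2,\infty}$ for a dense class of $f$; since $f_\alpha^{p_2,\infty}$ is closed, boundedness then transfers to the whole space.
\begin{enumerate}
\item I would take $f$ to be finite linear combinations of kernels $K_a(z)=e^{\alpha z\bar a}$, which are dense when $q_1<\infty$ (alternatively, finite atomic sums).
\item $T_\mu^\alpha K_a(z)=\frac\alpha\pi\int e^{\alpha(z+a)\bar w-\alpha|w|^2}\,d\mu$. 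Under the criterion (which forces $\widetilde\mu_\alpha$ to have polynomial growth), this is, up to the factor $e^{\alpha|a|^2/2}$, a Berezin-type convolution. Its size is $\lesssim e^{\alpha|z+a|^2/4}\times$ (a polynomially growing factor).
\item The factor $e^{\alpha|z+a|^2/4}=o(e^{\alpha|z|^2/2})$, so $T_\mu^\alpha K_a\in f_\alpha^{p_2,\infty}$.
\end{enumerate}
I expect step 2 to need care: I would estimate $|T_\mu^\alpha K_a(z)|$ using the local mass bound from condition (ii) of the relevant theorem, splitting the integral over the lattice squares.

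Compactness in part (a) transfers because the image of the unit ball lies in the closed subspace. Precompactness in $\mathcal F_\alpha^{p_2,\infty}$ together with containment in $f_\alpha^{p_2,\infty}$ gives precompactness in $f_\alpha^{p_2,\infty}$.

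\textbf{Part (b), necessity.} Since $f_\alpha^{p_2,\infty}$ is a closed subspace of $\mathcal F_\alpha^{p_2,\infty}$, a bounded map into it is bounded into $\mathcal F_\alpha^{p_2,\infty}$, so Theorem~\ref{thm:q1leq-q2} gives the sup-condition. Here $\nu_2-\nu_1=\frac1{p_1}-\frac1{p_2}$ because $q_1=q_2=\infty$. To obtain vanishing, I would adapt the Rademacher-extraction lower bound used for that theorem. Fix a radius $k$ and use the test function
\[
F_k=\sum_{a\in\Lambda\cap A_k}c_a\epsilon_a(1+|a|)^{\frac1{p_1}}k_a,
\]
with coefficients $c_a$ normalized in $\ell^{p_1}$ (dual to $p_{12}$) and random signs $\epsilon_a$. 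We want uniform bounds, but the lower estimate must be attached to shell $k$. So I would instead use the infinite sum
\[
F=\sum_k\sum_{a\in\Lambda\cap A_{n_k}}\cdots
\]
over a sparse sequence $n_k$ of shells where the shell norm stays $\ge\delta$. This $F$ has norm $\lesssim 1$ in $\mathcal F_\alpha^{p_1,\infty}$, because well-separated shells do not interact in the sup over $r$. The lower estimate from the proof of Theorem~\ref{thm:q1leq-q2}, via Khintchine, then gives $M_{p_2}(T_\mu^\alpha F,n_k)e^{-\alpha n_k^2/2}\gtrsim\delta$ for some choice of signs. The obstacle is that the sign choice must work simultaneously for every $k$. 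I would handle this by making the off-shell contributions summably small, choosing $n_k$ rapidly increasing and using Gaussian off-diagonal decay, and then choosing signs shell by shell. This gives $T_\mu^\alpha F\notin f_\alpha^{p_2,\infty}$, a contradiction.

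\textbf{Part (b), sufficiency.} The vanishing condition gives compactness into $\mathcal F_\alpha^{p_2,\infty}$ by Theorem~\ref{thm:q1leq-q2}. It remains to show that the range lies in $f_\alpha^{p_2,\infty}$. Write $\mu=\mu\chi_{|w|<N}+\mu\chi_{|w|\ge N}$.
\begin{itemize}
\item The compactly supported part maps into functions of order less than 2 growth, hence into $f_\alpha^{p_2,\infty}$.
\item The tail part has operator norm into $\mathcal F_\alpha^{p_2,\infty}$ that is $\lesssim$ the shell sup of the tail, by the norm estimate. This tends to $0$ as $N\to\infty$.
\end{itemize}
Since $f_\alpha^{p_2,\infty}$ is closed, $T_\mu^\alpha f\in f_\alpha^{p_2,\infty}$. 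Compactness in (i)$\Rightarrow$(ii) follows the same way, and (iii)$\Leftrightarrow$(iv) is inherited from Theorem~\ref{thm:q1leq-q2}.
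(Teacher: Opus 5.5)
Your proposal is correct. The necessity half of (b) is essentially the paper's argument: sparse shells where the profile stays $\ge\varepsilon$, signs chosen shell by shell by finite-set Rademacher extraction on $\mathbf 1_{\Omega_{k_j}}C^\mu$, leakage made summable via \eqref{eq:testing-shell} using $M<\infty$ from Theorem~\ref{thm:q1leq-q2}, and a single summed test $f=S_{\alpha,\Omega}^{p_1,\infty}x$. Two details differ from what you wrote. First, the lower bound lands on sampled shell norms of $R_{\alpha,\Omega}^{p_2,\infty}T_\mu^\alpha f$, not at the exact radius $r=n_k$. Second, splitting $T_\mu^\alpha F$ into single-shell pieces must be justified by Tonelli with the Gaussian majorant, because the synthesis series does not converge in $\mathcal F_\alpha^{p_1,\infty}$-norm.

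The sufficiency directions take a genuinely different route. The paper avoids density entirely. It writes $R_{\alpha,\Omega}^{p_2,\infty}T_\mu^\alpha f=D^\mu R_{\alpha,\Omega}^{p_1,q_1}f$ via the dual-atom reconstruction and dominates $|D^\mu x|\lesssim GM_mK_L|x|$. Vanishing shell norms then come from $K_L|x|\in\ell^{p_1,q_1}$ with $q_1<\infty$ in (a), and from vanishing block norms of $m$ in (b). The conclusion follows from \eqref{eq:little-endpoint-characterization}.

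Your route is different in both parts, and both work because $f_\alpha^{p_2,\infty}$ is closed.
In (a) you use three ingredients:
\begin{enumerate}
\item density of kernel sums in $\mathcal F_\alpha^{p_1,q_1}$ for $q_1<\infty$;
\item the bound $|T_\mu^\alpha K_a(z)|\le\frac{\alpha}{\pi}e^{\frac{\alpha}{4}|z+a|^2}\widetilde\mu_\alpha(\frac{z+a}{2})$;
\item polynomial growth of $\widetilde\mu_\alpha$, which does follow from the criterion since $\widehat\mu_R(\lambda)\le\widehat\mu_{2R}(z)$ for $z\in B(\lambda,R)$.
\end{enumerate}
In (b) you truncate $\mu$, note that $T_{\mu_N}^\alpha f$ has exponential type, and bound the tail by the upper norm estimate. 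This is Step~1 of Subsection~\ref{subsec:compactness-essential} reused.

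Your route is more elementary and needs only the norm estimates of Theorem~\ref{thm:q1leq-q2}. It also shows clearly why the criteria split: density is available exactly when $q_1<\infty$, whereas for $q_1=\infty$ one can only truncate, and truncation needs vanishing tails. Its cost is the cited density of kernel sums, which is true (e.g.\ via dilations $f(\rho z)$ and monotonicity of $M_p(f,r)$) but not proved in your sketch. The paper's matrix route avoids density and handles the little-to-little case of Theorem~\ref{thm:little-source} with the same computation.

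Minor slips:
\begin{itemize}
\item The kernel norms satisfy $\|\kappa_{\alpha,a}\|_{\mathcal F_\alpha^{p,q}}\asymp(1+|a|)^{\nu(p,q)}$, not $(1+|a|)^{-\nu(p,q)}$. Your normalization $(1+|a|)^{1/p_1}k_a$ is nonetheless the right one.
\item The integrand of $T_\mu^\alpha K_a$ is $e^{\alpha(w\bar a+z\bar w)-\alpha|w|^2}$. It has the same modulus as what you wrote, so your estimate is unaffected.
\end{itemize}
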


\subsection{Positive Toeplitz sesquilinear forms}
\label{subsec:forms}

The Toeplitz operator is naturally associated with a sesquilinear form.
Indeed, with the Gaussian pairing
\[
 \langle f,g\rangle_\alpha
 :=
 \frac{\alpha}{\pi}
 \int_{\mathbb C}
 f(z)\overline{g(z)}e^{-\alpha|z|^2}\,dA(z),
\]
the reproducing formula gives, whenever the integrals are justified,
\[
 \langle T_\mu^\alpha f,g\rangle_\alpha
 =
 \frac{\alpha}{\pi}
 \int_{\mathbb C}
 f(w)\overline{g(w)}e^{-\alpha|w|^2}\,d\mu(w).
\]
This motivates the positive Toeplitz sesquilinear form
\[
 T_\mu^\alpha(f,g)
 :=
 \frac{\alpha}{\pi}
 \int_{\mathbb C}
 f(w)\overline{g(w)}e^{-\alpha|w|^2}\,d\mu(w).
\]
When the target exponents are in the Banach range, the form criterion
agrees with the corresponding operator criterion. In the quasi-Banach
range, however, a form estimate does not in general determine the
operator quasi-norm. We therefore treat the boundedness of
$T_\mu^\alpha(f,g)$ as an independent problem.

\smallskip
\noindent
For $0<s\leq\infty$, let $s^*$ denote the generalized conjugate
defined by
\[
 \frac1{s^*}
 :=
 \left(1-\frac1s\right)_+ .
\]
For the form problem, define
\[
 \frac1{\hat p}
 :=
 \left(
 1-\frac1{p_1}-\frac1{p_2^*}
 \right)_+,
 \qquad
 \frac1{\hat q}
 :=
 \left(
 1-\frac1{q_1}-\frac1{q_2^*}
 \right)_+,
\]
and put
\[
 \sigma_2
 :=
 \left(\frac1{p_2}-1\right)_+
 -
 \left(\frac1{q_2}-1\right)_+ .
\]
When $p_2,q_2\geq1$, one has
\[
 \hat{p}=p_{12},
 \qquad
 \hat{q}=q_{12},
 \qquad
 \sigma_2=0.
\]
\begin{theorem}[Positive Toeplitz sesquilinear forms]
\label{thm:form}
Let $\alpha>0$, $0<p_1,q_1,p_2,q_2\leq\infty$, and let $\mu$ be a
positive locally finite Borel measure on $\mathbb C$. 
Then the estimate
\[
 |T_\mu^\alpha(f,g)|
\le C_\mu
 \|f\|_{\mathcal F_\alpha^{p_1,q_1}}
 \|g\|_{\mathcal F_{\alpha;\sigma_2}^{p_2^*,q_2^*}}
\]
holds for all
\( f\in\mathcal F_\alpha^{p_1,q_1}\) and \(
 g\in\mathcal F_{\alpha;\sigma_2}^{p_2^*,q_2^*}\),
with some constant $C_\mu>0$,
if and only if, for some, equivalently every, $R>0$,
\[
 (1+|z|)^{\nu_2-\nu_1}\widehat\mu_R(z)
 \in
 L_{\rm sh}^{\hat{p},\hat{q}}(\mathbb C).
\]
Equivalently,
\[
 (1+|z|)^{\nu_2-\nu_1}\widetilde\mu_\alpha(z)
 \in
 L_{\rm sh}^{\hat{p},\hat{q}}(\mathbb C).
\]
Moreover, for every fixed $R>0$,
\[
\|T_\mu^\alpha\|_{\mathrm{form}}
 \asymp
 \big\|
 (1+|\cdot|)^{\nu_2-\nu_1}\widehat\mu_R
 \big\|_{L_{\rm sh}^{\hat{p},\hat{q}}}
 \asymp
 \big\|
 (1+|\cdot|)^{\nu_2-\nu_1}\widetilde\mu_\alpha
 \big\|_{L_{\rm sh}^{\hat{p},\hat{q}}},
\]
where
\[
\|T_\mu^\alpha\|_{\mathrm{form}}
 :=
 \sup_{f\ne0,\,g\ne0}
 \frac{|T_\mu^\alpha(f,g)|}
 {\|f\|_{\mathcal F_\alpha^{p_1,q_1}}
  \|g\|_{\mathcal F_{\alpha;\sigma_2}^{p_2^*,q_2^*}}}.
\]
The constants in the norm comparisons depend on
$\alpha,p_1,q_1,p_2,q_2$ and the fixed radius $R$.
\end{theorem}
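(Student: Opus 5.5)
We reduce the form estimate to a discrete problem on a lattice and treat sufficiency and necessity separately. Fix $R>0$ and a lattice $\Lambda=\delta\mathbb Z^2$ with $\delta$ small compared with $R$. The first step replaces $\widehat\mu_R$ and $\widetilde\mu_\alpha$ by the sequence $\{\mu(B(\lambda,R))\}_{\lambda\in\Lambda}$. For every $R,R'>0$ the quantities $\|(1+|\cdot|)^{\nu_2-\nu_1}\widehat\mu_R\|_{L^{\hat p,\hat q}_{\rm sh}}$ and $\|(1+|\cdot|)^{\nu_2-\nu_1}\widetilde\mu_\alpha\|_{L^{\hat p,\hat q}_{\rm sh}}$ are comparable to a shell mixed-norm sequence quantity. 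We obtain this in three moves: cover $B(z,R')$ by boundedly many $B(\lambda,R)$; note that the weight $(1+|z|)^{\nu_2-\nu_1}$ is essentially constant on unit scales; and, for the Berezin profile, split $e^{-\alpha|z-w|^2}$ into dyadic annuli, whose Gaussian decay absorbs translation within the shells even in the quasi-Banach regime, by $s$-convexity with $s=\min(1,\hat p,\hat q)$. This is the same comparison used for Theorems~\ref{thm:q1leq-q2} and \ref{thm:q2lt-q1}, so we regard it as available.

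\textbf{Sufficiency.} We use the subharmonic estimate
\[
|f(w)|e^{-\frac\alpha2|w|^2}\lesssim \Bigl(\int_{B(w,1)}|f|^{s}e^{-\frac{\alpha s}{2}|u|^2}\,dA(u)\Bigr)^{1/s}
\]
for $w\in B(\lambda,R)$, and bound
\[
|T_\mu^\alpha(f,g)|\lesssim\sum_\lambda \mu(B(\lambda,R))\,F(\lambda)\,G(\lambda),
\]
where $F(\lambda)=\sup_{B(\lambda,R)}|f|e^{-\alpha|\cdot|^2/2}$ and $G$ is defined in the same way for $g$. The intrinsic estimates of \cite{FT} give the sequence bound
\[
\|\{F(\lambda)\}\|_{\ell^{p_1,q_1}_{\rm sh}}\lesssim (1+k)^{\text{shift}}\|f\|_{\mathcal F^{p_1,q_1}_\alpha}.
\]
Here the shell $k$ contains about $k$ lattice points, and passing from the average $M_p$ on the circle to an $\ell^p$ sum over those points produces a factor $(1+k)^{1/p}$, while the measure $r\,dr$ produces $(1+k)^{1/q}$. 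This is where the power $\nu_j$ enters. The same applies to $g$ in $\mathcal F^{p_2^*,q_2^*}_{\alpha;\sigma_2}$. The weight $\sigma_2$ is designed to compensate exactly for the fact that when $p_2<1$ or $q_2<1$ the conjugate exponent is $\infty$, so that the $(1+k)$ powers combine to $(1+k)^{\nu_2-\nu_1}$. We then apply H\"older's inequality in two stages. Within each shell, use the exponents $p_1,p_2^*,\hat p$; the defining relation $1/\hat p=(1-1/p_1-1/p_2^*)_+$ makes the positive part harmless, because $\ell^a\subset\ell^b$ for $a\le b$. Across shells, use $q_1,q_2^*,\hat q$. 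This yields
\[
|T_\mu^\alpha(f,g)|\lesssim \|\cdot\|_{L^{\hat p,\hat q}_{\rm sh}}\,\|f\|\,\|g\|.
\]

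\textbf{Necessity.} Here we test against atoms. Let $k_\lambda(z)=e^{\alpha\bar\lambda z-\alpha|\lambda|^2/2}$ be the normalized reproducing kernels. The atomic decomposition of \cite{FT} gives
\[
\Bigl\|\sum c_\lambda k_\lambda\Bigr\|_{\mathcal F^{p,q}_{\alpha}}\lesssim \|\{(1+k)^{-\text{shift}}c_\lambda\}\|_{\ell^{p,q}_{\rm sh}}.
\]
We take $f=\sum a_\lambda r_\lambda(t)k_\lambda$ and $g=\sum b_\lambda r_\lambda(s)k_\lambda$ with Rademacher signs, integrate $|T_\mu^\alpha(f,g)|$ over $t$ and $s$, and use Khintchine's inequality together with positivity of $\mu$. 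The diagonal terms dominate and give
\[
\sum_\lambda a_\lambda b_\lambda\,\mu(B(\lambda,R))\lesssim \|T_\mu^\alpha\|_{\rm form}\,\|a\|\,\|b\|.
\]
A cleaner route is to choose signs so that the off-diagonal part is controlled: we lower-bound $\int|f|^2$-type quantities locally, $\mathbb E|f(w)|^2\ge |a_\lambda|^2|k_\lambda(w)|^2$ on $B(\lambda,R)$, which uses the positivity of $\mu$. Since the form is not $|f|^2$, we apply Cauchy--Schwarz on the product of two independent averages, or alternatively take $g$ built from the same signs as $f$. Duality of mixed sequence spaces then identifies the supremum over $a$ and $b$ as the $\ell^{\hat p,\hat q}_{\rm sh}$ norm of the weighted masses. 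Since only nonnegative $a,b$ matter, the Köthe duality $(\ell^{p_1}\cdot\ell^{p_2^*})' =\ell^{\hat p}$, including the case where the positive part is $0$ and $\hat p=\infty$, applies in the full quasi-Banach range.

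\textbf{Main obstacle.} The hardest step is the Rademacher extraction in the necessity argument, where $f$ and $g$ differ. We will write $g$ with the same signs as $f$, so that $f\bar g=\sum_{\lambda,\lambda'} a_\lambda b_{\lambda'} r_\lambda r_{\lambda'}k_\lambda\overline{k_{\lambda'}}$. Its expectation is the diagonal $\sum a_\lambda b_\lambda |k_\lambda|^2$, which is nonnegative. Integrating against $\mu$ and using $\mathbb E|T(f,g)|\ge|\mathbb E T(f,g)|$ gives the diagonal lower bound directly. The Khintchine upper bound on $\mathbb E\|f\|\,\|g\|$ then follows from the atomic estimate uniformly in the signs.
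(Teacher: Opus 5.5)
Your proposal is correct and is essentially the paper's proof. For necessity you settle, as the paper does, on finite kernel sums with the \emph{same} Rademacher signs in $f$ and $g$ (independent signs would make the expectation vanish), so only the diagonal $(1+|\lambda|)^{\nu_2-\nu_1}\widetilde\mu_\alpha(\lambda)$ survives, and your K\"othe-duality step is the paper's two-level factorization of $\ell^{\check p,\check q}$ into $\ell^{p_1,q_1}\cdot\ell^{p_2^*,q_2^*}$ followed by finite-support norming with $\hat p=(\check p)^*$, $\hat q=(\check q)^*$; for sufficiency you discretize $f$ and $g$ separately and use a three-factor shell H\"older inequality, whereas the paper first puts $fg$ in $\mathcal F_{2\alpha;\sigma_2}^{\check p,\check q}$ by H\"older and then applies a weighted one-Carleson estimate, which is only a reordering of the same computation with $-\nu(p_2^*,q_2^*)-\sigma_2=\nu_2$ handling the weights.
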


\subsection{Proof strategy and organization}
\label{subsec:strategy}

The proofs are organized around two complementary discretizations.
For sufficiency, the pocket decomposition of Subsection~\ref{subsec:pocket} provides
a bounded linear recovery of arbitrary functions in the source space.
After inserting this representation into the Toeplitz integral, the
operator is dominated by Gaussian matrices surrounding a diagonal
multiplier whose coefficients are weighted local masses of the
symbol. The annular multiplier theorem then separates the transfer
of the inner and outer exponents. For necessity, we use instead the
fixed sampling lattice of Subsection~\ref{subsec:lattice}. The associated Toeplitz
testing matrix has diagonal
\[
 \frac{\alpha}{\pi}
 (1+|\lambda|)^{\nu_2-\nu_1}
 \widetilde\mu_\alpha(\lambda),
\]
and finite Rademacher extraction recovers this diagonal without
appealing to duality of the target space. The local-mass
discretization and the Gaussian Berezin comparison then convert the
resulting sequence conditions into the continuous criteria stated in
Theorems~\ref{thm:q1leq-q2} and~\ref{thm:q2lt-q1}.

\smallskip
\noindent
Compactness is obtained from the same framework. Compactly supported
symbols are approximated in operator quasi-norm by finite-rank
operators, while truncation of the measure gives the upper
essential-norm estimate. In the nonvanishing-tail case, sparse
single-shell tests and the off-diagonal decay of the testing matrix
give the reverse estimate. Mappings involving the little outer
endpoint are handled through the canonical dual reconstruction and
the corresponding vanishing-shell sequence spaces. The positive
Toeplitz sesquilinear form is treated separately: its sufficiency
uses a weighted one-Carleson estimate, whereas necessity follows from
finite signed kernel tests, finite sequence factorization, and
finite-support norming. In particular, the form theorem is not used
in the proof of the operator classifications.

\smallskip
\noindent
Section~\ref{sec:tools} collects the analytic inputs from the
mixed-Fock and sampling theories. Section~\ref{sec:discrete}
develops the Gaussian matrix estimates, annular multipliers,
Rademacher extraction, and the square Toeplitz testing matrix.
Section~\ref{sec:comparison} establishes the comparison between local
masses and Gaussian Berezin profiles, including their asymptotic
tails. Section~\ref{sec:operators} proves the operator results,
treats the large and little outer endpoints, and records several particular cases.
Finally, Section~\ref{sec:forms} proves the positive Toeplitz
sesquilinear-form theorem.

\bigskip
\noindent
\textit{Notation.}
Throughout, $C>0$ denotes a constant whose value may change
from line to line. For nonnegative quantities $A$ and $B$,
we write $A\lesssim B$ if $A\le CB$, $A\gtrsim B$ if
$B\lesssim A$, and $A\asymp B$ if both comparisons hold.
Unless otherwise indicated, implicit constants may depend
on the fixed Gaussian parameters, exponents, radii,
localization orders, and geometric discretizations under
consideration, but are independent of the varying functions,
sequences, points, measures, finite subsets, selected
subsequences, and shell or truncation indices. Dependence
on fixed auxiliary parameters may be displayed by subscripts
or named constants when informative. Any uniformity needed
for limiting, compactness, essential-norm, or absorption
arguments is stated explicitly.
In a quasi-Banach target, $\|A\|_{X\to Y}$ denotes the
operator quasi-norm.

\section{Analytic tools from the mixed-Fock and sampling theories}\label{sec:tools}

We collect the companion-paper inputs in the normalization used throughout
this paper. The coefficient spaces are annular; no translation invariance of
their quasi-norms is assumed. Two discretizations will be used repeatedly:
the pocket decomposition provides constructive atomic recovery, while the
fixed Hilbert sampling lattice provides restriction, reconstruction, and the
testing framework for necessity.

\subsection{Spaces, annuli, and kernels}
\label{subsec:mixed-norm-tools}
We retain the notation introduced in Section~\ref{sec:intro}. For
\[
 s(p,q):=\min\{1,p,q\},
\]
the space $\mathcal F_\alpha^{p,q}$ is complete and satisfies
\[
 \|f+g\|_{\mathcal F_\alpha^{p,q}}^{s(p,q)}
 \leq
 \|f\|_{\mathcal F_\alpha^{p,q}}^{s(p,q)}
 +
 \|g\|_{\mathcal F_\alpha^{p,q}}^{s(p,q)};
\]
see \cite[Lemma 2.13]{FT}. The little outer-endpoint space
$f_\alpha^{p,\infty}$ is closed in $\mathcal F_\alpha^{p,\infty}$.

\smallskip
\noindent
For a locally finite configuration $\Lambda\subset\mathbb C$, put
$\Lambda_k:=\Lambda\cap A_k$ and
\[
 \|a\|_{\ell^{p,q}(\Lambda)}
 :=
 \big\|
   (\|a|_{\Lambda_k}\|_{\ell^p})_{k\geq0}
 \big\|_{\ell^q}.
\]
At the outer endpoint, let
\[
 \ell_0^{p,\infty}(\Lambda)
 :=
 \left\{
 a\in\ell^{p,\infty}(\Lambda):
 \|a|_{\Lambda_k}\|_{\ell^p}\to0
 \right\}.
\]
For $\sigma\in\mathbb R$, define
\[
 \|a\|_{\ell_{\mathrm{nat};\sigma}^{p,q}(\Lambda)}
 :=
 \big\|
 \big(
 (1+k)^{\nu(p,q)+\sigma}
 \|a|_{\Lambda_k}\|_{\ell^p}
 \big)_{k\geq0}
 \big\|_{\ell^q},
\]
and, when $q=\infty$,
\[
 \ell_{\mathrm{nat};\sigma,0}^{p,\infty}(\Lambda)
 :=
 \left\{
 a\in\ell_{\mathrm{nat};\sigma}^{p,\infty}(\Lambda):
 (1+k)^{\nu(p,\infty)+\sigma}
 \|a|_{\Lambda_k}\|_{\ell^p}\to0
 \right\}.
\]
We write $\ell_{\mathrm{nat}}^{p,q}$ and
$\ell_{\mathrm{nat},0}^{p,\infty}$ when $\sigma=0$.
These sequence spaces satisfy the corresponding powered
quasi-triangle inequalities; see \cite[Section 2.1]{SI}.

\smallskip
\noindent
The normalized kernel is
\[
 \kappa_{\alpha,w}(z)
 =
 e^{\alpha z\overline w-\frac{\alpha}{2}|w|^2},
 \qquad
 |\kappa_{\alpha,w}(z)|e^{-\frac{\alpha}{2}|z|^2}
 =
 e^{-\frac{\alpha}{2}|z-w|^2}.
\]
We use \cite[Lemma 2.4 and Proposition 2.11]{FT} in the form
\[
 |f(z)|e^{-\frac{\alpha}{2}|z|^2}
 \lesssim
 (1+|z|)^{-\nu(p,q)}
 \|f\|_{\mathcal F_\alpha^{p,q}},
 \qquad
 \|\kappa_{\alpha,w}\|_{\mathcal F_\alpha^{p,q}}
 \asymp
 (1+|w|)^{\nu(p,q)}.
\]

\smallskip
\noindent
For
\[
 B_{p,\sigma}(f;k)
 :=
 \sup_{k\leq r<k+1}
 M_p(f,r)e^{-\frac{\alpha}{2} r^2}(1+r)^\sigma,
\]
annular discretization gives
\begin{equation}\label{eq:annular}
 \|f\|_{\mathcal F_{\alpha;\sigma}^{p,q}}
 \asymp
 \left\|
 \bigl(
 (1+k)^{\frac{1}{q}}B_{p,\sigma}(f;k)
 \bigr)_{k\geq0}
 \right\|_{\ell^q}.
\end{equation}
This includes $q=\infty$ and follows from
\cite[Proposition 2.6 and Lemma 7.1]{FT}.  We write
\(
 B_p(f;k):=B_{p,0}(f;k)
\)
when $\sigma=0$.

\subsection{Atomic recovery from an exact pocket decomposition} \label{subsec:pocket}

We first record the atomic discretization used in the constructive
estimates for the Toeplitz operator.
Fix $0<p,q\leq\infty$. Choose a sufficiently fine pocket tiling
$\mathcal P_\delta$, with centers $\zeta_Q$, as in
\cite[Section 5.1]{FT}, and assign the boundaries by a fixed Borel
convention so that
\[
 \mathbb C
 =
 \bigsqcup_{Q\in\mathcal P_\delta}Q,
 \qquad
 B(\zeta_Q,c_{\rm in}\delta)
 \subset Q
 \subset
 B(\zeta_Q,C_{\rm out}\delta).
\]
The centers form a separated configuration. For $k\in\mathbb N_0$, put
\[
 \mathcal P_{\delta,k}
 :=
 \{Q\in\mathcal P_\delta:\zeta_Q\in A_k\},
\]
and equip sequences on $\mathcal P_\delta$ with the corresponding
annular mixed quasi-norm $\ell^{p,q}(\mathcal P_\delta)$. The above
disjoint union is an exact Borel partition, so it may be used for
arbitrary Borel measures, including measures charging cell boundaries.

\smallskip
\noindent
For $Q\in\mathcal P_\delta$, set
\[
 \widetilde\kappa_{\alpha,\zeta_Q}
 :=
 \frac{\kappa_{\alpha,\zeta_Q}}
 {\|\kappa_{\alpha,\zeta_Q}\|_{\mathcal F_\alpha^{p,q}}}.
\]
The mixed-normalized atomic decomposition
\cite[Corollary 5.33]{FT}, together with the bounded linear recovery
operator recorded in \cite[Section 6.1, equation (6.1)]{FT}, gives
bounded linear maps
\[
 S_\delta^{p,q}:
 \ell^{p,q}(\mathcal P_\delta)
 \longrightarrow
 \mathcal F_\alpha^{p,q},
 \qquad
 A_\delta^{p,q}:
 \mathcal F_\alpha^{p,q}
 \longrightarrow
 \ell^{p,q}(\mathcal P_\delta),
 \qquad
 S_\delta^{p,q}A_\delta^{p,q}=I,
\]
where
\[
 S_\delta^{p,q}d
 =
 \sum_{Q\in\mathcal P_\delta}
 d_Q\widetilde\kappa_{\alpha,\zeta_Q},
\]
with locally uniform convergence. Thus every
$f\in\mathcal F_\alpha^{p,q}$ admits the linear recovery
\[
 f
 =
 S_\delta^{p,q}A_\delta^{p,q}f,
 \qquad
 \|A_\delta^{p,q}f\|_{\ell^{p,q}}
 \lesssim
 \|f\|_{\mathcal F_\alpha^{p,q}}.
\]

\subsection{Localized synthesis and a fixed Hilbert sampling lattice}
\label{subsec:lattice}
We next record a localized synthesis principle and then specialize it to a
fixed Hilbert sampling lattice. The latter will be used for the testing and
necessity arguments.
Let $\Sigma\subset\mathbb C$ be separated. A family
$(\phi_\xi)_{\xi\in\Sigma}$ is rapidly Fock localized if, for every
$N>0$,
\[
 |\phi_\xi(z)|e^{-\frac{\alpha}{2}|z|^2}
 \lesssim_N
 (1+|z-\xi|)^{-N},
 \qquad z\in\mathbb C,\ \xi\in\Sigma.
\]
The implicit constant may depend on $N$ and on the fixed
family and its geometric data.
By \cite[Theorem 2.5]{SI}, for $0<p,q\leq\infty$ and
$\sigma\in\mathbb R$, the synthesis operator
\[
 S_\Sigma^\phi:
 \ell_{\mathrm{nat};\sigma}^{p,q}(\Sigma)
 \longrightarrow
 \mathcal F_{\alpha;\sigma}^{p,q},
 \qquad
 S_\Sigma^\phi c
 :=
 \sum_{\xi\in\Sigma}c_\xi\phi_\xi,
\]
is bounded, and the defining series converges absolutely and locally
uniformly.

\smallskip
\noindent
In particular, \cite[Corollary 2.6]{SI} shows that the normalized synthesis operator
\[
 S_\Sigma^{\phi,p,q}a
 :=
 \sum_{\xi\in\Sigma}
 a_\xi(1+|\xi|)^{-\nu(p,q)}\phi_\xi
\]
is bounded from \(\ell^{p,q}(\Sigma)\) to \(\mathcal F_\alpha^{p,q}\).
When $q=\infty$, it also defines a bounded map
\[
 S_\Sigma^{\phi,p,\infty}:
 \ell_0^{p,\infty}(\Sigma)
 \longrightarrow
 f_\alpha^{p,\infty}.
\]

\smallskip
\noindent
We now fix
\[
 \Omega:=\delta_\Omega(\mathbb Z+i\mathbb Z),
 \qquad
 0<\delta_\Omega<\sqrt{\frac{\pi}{\alpha}}.
\]
Then $\Omega$ is a separated sampling set for
$\mathcal F_\alpha^2$ by \cite[Theorem 2.1]{SI}. Let
$(\varphi_\omega)_{\omega\in\Omega}$ denote its canonical Hilbert dual
family. By \cite[Proposition 2.2]{SI}, these atoms are rapidly Fock
localized.

\smallskip
\noindent
For $0<p,q\leq\infty$, define the normalized restriction
\[
 R_{\alpha,\Omega}^{p,q}f
 :=
 \left(
 f(\omega)e^{-\frac{\alpha}{2}|\omega|^2}
 (1+|\omega|)^{\nu(p,q)}
 \right)_{\omega\in\Omega}.
\]
By \cite[Proposition 3.1]{SI},
\[
 R_{\alpha,\Omega}^{p,q}:
 \mathcal F_\alpha^{p,q}
 \longrightarrow
 \ell^{p,q}(\Omega)
\]
is bounded, and at the outer endpoint
\[
 R_{\alpha,\Omega}^{p,\infty}:
 f_\alpha^{p,\infty}
 \longrightarrow
 \ell_0^{p,\infty}(\Omega)
\]
is bounded.

\smallskip
\noindent
The canonical dual family reconstructs the whole mixed-norm scale:
by \cite[Proposition 3.6]{SI},
\[
 f
 =
 S_\Omega^{\varphi,p,q}R_{\alpha,\Omega}^{p,q}f
 =
 \sum_{\omega\in\Omega}
 f(\omega)e^{-\frac{\alpha}{2}|\omega|^2}\varphi_\omega,
 \qquad
 f\in\mathcal F_\alpha^{p,q}.
\]
For $q<\infty$, the finite-shell partial sums converge in
$\mathcal F_\alpha^{p,q}$. When $q=\infty$, the series converges
absolutely and locally uniformly and still reconstructs every
$f\in\mathcal F_\alpha^{p,\infty}$ by \cite[Proposition 3.6]{SI};
no norm-density argument at the large outer endpoint is used.

\smallskip
\noindent
For later use, taking $\phi_\omega=\kappa_{\alpha,\omega}$ in the generic localized-synthesis result, we write
\[
 S_{\alpha,\Omega}^{p,q}a
 :=
 \sum_{\omega\in\Omega}
 a_\omega(1+|\omega|)^{-\nu(p,q)}
 \kappa_{\alpha,\omega}
\]
for the corresponding normalized-kernel synthesis operator.

\smallskip
\noindent
Finally, let
\[
 Q_\omega
 :=
 \omega+\left[-\frac{\delta_\Omega}{2},\frac{\delta_\Omega}{2}\right) \times \left[-\frac{\delta_\Omega}{2},\frac{\delta_\Omega}{2}\right),
 \qquad \omega\in\Omega.
\]
Then
\[
 \mathbb C=\bigsqcup_{\omega\in\Omega}Q_\omega
\]
is an exact Borel partition, and
\[
 Q_\omega\subset B(\omega,\Delta_\Omega),
 \qquad
 \Delta_\Omega:=\frac{\delta_{\Omega}}{\sqrt2}.
\]
With the pocket parameter $\delta$ fixed in Subsection~2.2, choose once
and for all a working radius
\begin{equation}\label{eq-R*}
 R_*
 >
 \max\{\Delta_\Omega,C_{\rm out}\delta\}.
\end{equation}

\section{Discrete and localized matrix tools}\label{sec:discrete}

The analytic discretizations of Section~\ref{sec:tools} reduce the main operator
estimates to a small collection of sequence-space mechanisms. We develop
them here: Gaussian matrices control off-diagonal transport, diagonal
multipliers encode the change of source and target exponents, and
Rademacher extraction recovers the diagonal information needed for
necessity. These ingredients are then assembled in the square Toeplitz
testing matrix.

\smallskip
\noindent
The configurations below are separated and hence have bounded geometry:
the number of points in any unit disk is uniformly bounded.

\subsection{Gaussian matrices}

We first record the off-diagonal decay of Gaussian matrices on the
annular mixed sequence spaces.

\begin{proposition}[Gaussian matrices on annular mixed sequence spaces]
\label{prop:gaussian-matrices}
Let $\Lambda,\Sigma\subset\mathbb C$ be separated and let
\[
 G_{\xi,\lambda}=e^{-c|\xi-\lambda|^2},
 \qquad c>0.
\]
Then, for every $0<p\leq\infty$,
\begin{equation}\label{eq:gaussian-shell}
 \big\|
 \mathbf 1_{\Sigma_\ell}G\mathbf 1_{\Lambda_k}x
 \big\|_{\ell^p}
 \lesssim
 e^{-\frac c2|\ell-k|^2}
 \big\|x|_{\Lambda_k}\big\|_{\ell^p},
 \qquad k,\ell\in\mathbb N_0.
\end{equation}
The implicit constant depends only on $c,p$ and the separation
constants of $\Lambda$ and $\Sigma$.
Consequently,
\(
 G:\ell^{p,q}(\Lambda)\longrightarrow\ell^{p,q}(\Sigma)
\)
is bounded for all $0<p,q\leq\infty$, and
\(
 G:\ell_0^{p,\infty}(\Lambda)
 \longrightarrow
 \ell_0^{p,\infty}(\Sigma)
\)
is bounded.
\end{proposition}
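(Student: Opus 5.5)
The plan is to prove the single-block estimate \eqref{eq:gaussian-shell} first and then sum over shells.

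\textbf{Step 1: one block.} Fix $k,\ell$ and write $B=\mathbf 1_{\Sigma_\ell}G\mathbf 1_{\Lambda_k}$. If $\xi\in\Sigma_\ell$ and $\lambda\in\Lambda_k$, then $|\xi-\lambda|\ge ||\xi|-|\lambda||\ge (|\ell-k|-1)_+$. Hence
\[
 e^{-c|\xi-\lambda|^2}\le e^{-\frac c2|\xi-\lambda|^2}\,e^{-\frac c2(|\ell-k|-1)_+^2}\lesssim e^{-\frac c2|\xi-\lambda|^2}\,e^{-\frac c4|\ell-k|^2},
\]
using $(m-1)_+^2\ge m^2/2-1$. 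This produces the decay factor $e^{-\frac c4|\ell-k|^2}$. The constant in the exponent is immaterial for the consequences; if the exact $\frac c2$ is wanted, split $e^{-c|\cdot|^2}=e^{-c\epsilon|\cdot|^2}e^{-c(1-\epsilon)|\cdot|^2}$ with $\epsilon$ small and absorb the $O(1)$ loss into the constant.

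It remains to bound the matrix $H_{\xi\lambda}=e^{-\frac c2|\xi-\lambda|^2}$ uniformly on $\ell^p$. By bounded geometry, $\sup_\xi\sum_\lambda H_{\xi\lambda}$ and $\sup_\lambda\sum_\xi H_{\xi\lambda}$ are finite, since the number of points of a separated set in $B(\xi,j+1)\setminus B(\xi,j)$ is $O(j+1)$.
\begin{itemize}
\item For $1\le p\le\infty$, Schur's test gives boundedness.
\item For $0<p<1$, use $|\sum_\lambda H_{\xi\lambda}x_\lambda|^p\le\sum_\lambda H_{\xi\lambda}^p|x_\lambda|^p$ and sum in $\xi$. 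This needs only $\sup_\lambda\sum_\xi e^{-\frac{cp}{2}|\xi-\lambda|^2}<\infty$, which again follows from bounded geometry.
\end{itemize}
All constants depend only on $c$, $p$ and the separation constants.

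\textbf{Step 2: summing the shells.} For $y=Gx$ we have $\|y|_{\Sigma_\ell}\|_{\ell^p}\le C_s\bigl(\sum_k\|B_{\ell k}x\|_{\ell^p}^{s}\bigr)^{1/s}$ with $s=\min\{1,p\}$, by the $s$-triangle inequality in $\ell^p$. Absolute convergence of the entries is not an issue: each row of $G$ is summable against a bounded sequence supported on finitely many points per shell. Hence
\[
 a_\ell:=\|y|_{\Sigma_\ell}\|_{\ell^p}^s\lesssim\sum_k e^{-\frac{cs}{4}|\ell-k|^2}\,b_k,\qquad b_k:=\|x|_{\Lambda_k}\|_{\ell^p}^s .
\]
This is a discrete convolution on $\mathbb N_0$ with a summable Gaussian kernel $g$. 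We need $\|a\|_{\ell^{q/s}}\lesssim\|b\|_{\ell^{q/s}}$.
\begin{itemize}
\item If $q/s\ge1$, Young's inequality $\ell^1*\ell^{q/s}\subset\ell^{q/s}$ gives this.
\item If $q/s<1$, i.e.\ $q<s$, use $(\sum_k g_{\ell-k}b_k)^{q/s}\le\sum_k g_{\ell-k}^{q/s}b_k^{q/s}$ and sum in $\ell$; again $\sum_m g_m^{q/s}<\infty$.
\end{itemize}
Taking the $1/s$ power yields boundedness $\ell^{p,q}(\Lambda)\to\ell^{p,q}(\Sigma)$, including $q=\infty$ via $\sup_\ell a_\ell\le\|g\|_{\ell^1}\sup_k b_k$.

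\textbf{Step 3: the little space.} If $b_k\to0$, then $a_\ell\lesssim\sum_k g_{\ell-k}b_k\to0$ by dominated convergence. Split the sum into $|k-\ell|\le M$, which is small for $\ell$ large, and the tail $\sum_{|m|>M}g_m\sup b$, which is small for $M$ large. This gives $\ell_0^{p,\infty}\to\ell_0^{p,\infty}$.

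The main obstacle is the quasi-Banach bookkeeping in Step 1 for $p<1$ together with the two regimes of $q/s$ in Step 2. Both reduce to the $s$-triangle inequality plus summability of powers of Gaussians over separated sets, so I expect no real difficulty beyond this care.
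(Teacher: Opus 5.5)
Your proposal is correct and follows essentially the same route as the paper's proof. The shared steps are: the radial bound $|\xi-\lambda|\ge(|\ell-k|-1)_+$ to extract the shell decay; Schur's test for $p\ge1$ and $p$-subadditivity for $p<1$ on the residual Gaussian kernel; the $s$-triangle inequality with Young's inequality or $\frac qs$-subadditivity on the shell index; and convolution with a summable Gaussian preserving vanishing sequences for $\ell_0^{p,\infty}$. The only cosmetic difference is that you first obtain the rate $e^{-\frac c4|\ell-k|^2}$ and then recover $\frac c2$ by an uneven split, whereas the paper makes that split directly, keeping the residual kernel $e^{-\frac c4|\xi-\lambda|^2}$.
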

\begin{proof}
If $\xi\in\Sigma_\ell$ and $\lambda\in\Lambda_k$, then
\[
 |\xi-\lambda|
 \geq
 \big||\xi|-|\lambda|\big|
 \geq
 (|\ell-k|-1)_+.
\]
Then
\[
 e^{-c|\xi-\lambda|^2}
 \lesssim_c
 e^{-\frac c2|\ell-k|^2}
 e^{-\frac c4|\xi-\lambda|^2}.
\]
Set
\[
 H_{\xi,\lambda}
 :=
 e^{-\frac c4|\xi-\lambda|^2}.
\]
Since $\Lambda$ and $\Sigma$ are separated, the Gaussian decay gives
\[
 \sup_{\xi\in\Sigma}\sum_{\lambda\in\Lambda}H_{\xi,\lambda}
 +
 \sup_{\lambda\in\Lambda}\sum_{\xi\in\Sigma}H_{\xi,\lambda}
 <\infty.
\]
For $1\leq p\leq\infty$, the Schur test therefore gives
\[
 \|Hx\|_{\ell^p(\Sigma)}
 \lesssim
 \|x\|_{\ell^p(\Lambda)}.
\]
Hence
\[
 \big\|
 \mathbf 1_{\Sigma_\ell}G\mathbf 1_{\Lambda_k}x
 \big\|_{\ell^p}
 \lesssim
 e^{-\frac c2|\ell-k|^2}
 \big\|x|_{\Lambda_k}\big\|_{\ell^p}.
\]
Suppose now that $0<p<1$. By $p$-subadditivity,
\[
 \sum_{\xi\in\Sigma_\ell}
 \left|
 \sum_{\lambda\in\Lambda_k}
 H_{\xi,\lambda}x_\lambda
 \right|^p
\leq
 \sum_{\xi\in\Sigma_\ell}
 \sum_{\lambda\in\Lambda_k}
 H_{\xi,\lambda}^p|x_\lambda|^p  =
 \sum_{\lambda\in\Lambda_k}
 |x_\lambda|^p
 \sum_{\xi\in\Sigma_\ell}H_{\xi,\lambda}^p.
\]
The last inner sum is uniformly bounded, again by separation. This
proves \eqref{eq:gaussian-shell} for all $0<p\leq\infty$.

\smallskip
\noindent
To pass to the mixed sequence spaces, put
\[
 s :=\min\{1,p\}.
\]
By \eqref{eq:gaussian-shell} and the $s$-triangle inequality,
\[
\big\|(Gx)|_{\Sigma_\ell}\big\|_{\ell^p}^{s}
 \lesssim
 \sum_{k\geq0}
 e^{-\frac{cs}{2}|\ell-k|^2}\big\|x|_{\Lambda_k}\big\|_{\ell^p}^{s}.
\]
Thus $\left(\|(Gx)|_{\Sigma_\ell}\big\|_{\ell^p}^{s}\right)_{\ell \geq 0}$ is dominated by the convolution of
$\left(\big\|x|_{\Lambda_k}\big\|_{\ell^p}^{s}\right)_{k \geq 0}$ with an $\ell^1$ Gaussian sequence.

\smallskip
\noindent
If $q<\infty$ and $\frac{q}{s}\geq1$, Young's inequality on
$\ell^{\frac{q}{s}}$ gives
\[
 \left\|
 \left(
   \|(Gx)|_{\Sigma_\ell}\|_{\ell^p}^{s}
 \right)_{\ell\geq0}
 \right\|_{\ell^{\frac{q}{s}}}
 \lesssim
 \left\|
 \left(
   \|x|_{\Lambda_k}\|_{\ell^p}^{s}
 \right)_{k\geq0}
 \right\|_{\ell^{\frac{q}{s}}}.
\]
Equivalently,
\[
 \|Gx\|_{\ell^{p,q}(\Sigma)}
 \lesssim
 \|x\|_{\ell^{p,q}(\Lambda)}.
\]
If $\frac{q}{s}<1$, the subadditivity of the $\frac{q}{s}$ power gives
\[
 \sum_{\ell\geq0}
 \|(Gx)|_{\Sigma_\ell}\|_{\ell^p}^{q}
 \lesssim
 \sum_{\ell\geq0}\sum_{k\geq0}
 e^{-\frac{cq}{2}|\ell-k|^2}
 \|x|_{\Lambda_k}\|_{\ell^p}^{q}  \lesssim
 \sum_{k\geq0}
 \|x|_{\Lambda_k}\|_{\ell^p}^{q},
\]
which gives the same conclusion.
For $q=\infty$, taking
suprema yields
\[
 \sup_{\ell \geq 0} \|(Gx)|_{\Sigma_\ell}\big\|_{\ell^p}
 \lesssim
 \sup_{k \geq 0}  \big\|x|_{\Lambda_k}\big\|_{\ell^p}.
\]
Hence
\(
 G:\ell^{p,q}(\Lambda)\longrightarrow\ell^{p,q}(\Sigma)
\)
is bounded for every $0<p,q\leq\infty$.

\smallskip
\noindent
Finally, if $x\in\ell_0^{p,\infty}(\Lambda)$, then
$\|x|_{\Lambda_k}\|_{\ell^p}^{s}\to0$.
Since convolution with an $\ell^1$ Gaussian sequence preserves
vanishing at infinity, the preceding estimate gives
$\|(Gx)|_{\Sigma_\ell}\|_{\ell^p}^{s}\to0$, and hence
$\|(Gx)|_{\Sigma_\ell}\|_{\ell^p}\to0$.
Therefore
\(
 G:\ell_0^{p,\infty}(\Lambda)
 \longrightarrow
 \ell_0^{p,\infty}(\Sigma)
\)
is bounded.
\end{proof}

\subsection{Annular diagonal multipliers}

The preceding proposition controls the off-diagonal spreading of
coefficients. We now isolate the diagonal mechanism that records the
change between the source and target exponent pairs.
Let $\Lambda\subset\mathbb C$ be separated and let
$m:=(m_\lambda)_{\lambda\in\Lambda}$ be a scalar sequence. We write
\[
 M_m x:=(m_\lambda x_\lambda)_{\lambda\in\Lambda}
\]
for the associated diagonal multiplier.

\begin{proposition}
\label{prop:annular-multipliers}
Let $0<p_1,q_1,p_2,q_2\leq\infty$. Then the operator
\(
 M_m:\ell^{p_1,q_1}(\Lambda)
 \longrightarrow
 \ell^{p_2,q_2}(\Lambda)
\)
is bounded if and only if
\(
 m  \in \ell^{p_{12}, q_{12}}(\Lambda).
\)
Moreover,
\[
 \|M_m\|_{\ell^{p_1,q_1}\to\ell^{p_2,q_2}}
 =
 \left\| m \right\|_{\ell^{p_{12}, q_{12}}}.
\]
If $q_1=\infty$, replacing the source space
$\ell^{p_1,\infty}(\Lambda)$ by
$\ell_0^{p_1,\infty}(\Lambda)$ does not change either the
boundedness criterion or the operator norm.

\smallskip
\noindent
When $q_2=\infty$, the corresponding little-target assertions are as
follows:
\begin{enumerate}[label=\textup{(\roman*)}]
\item if $q_1<\infty$, boundedness into
      $\ell^{p_2,\infty}(\Lambda)$ is equivalent to boundedness into
      $\ell_0^{p_2,\infty}(\Lambda)$;

\item if $q_1=\infty$, then the operator
\(
 M_m:\ell_0^{p_1,\infty}(\Lambda)
 \longrightarrow
 \ell_0^{p_2,\infty}(\Lambda)
\)
is bounded if and only if
\(
 m  \in \ell^{p_{12}, \infty}(\Lambda);
\)

\item the operator
 \(
 M_m:\ell^{p_1,\infty}(\Lambda)
 \longrightarrow
 \ell_0^{p_2,\infty}(\Lambda)
\)
is bounded if and only if
\( m\in\ell_0^{p_{12},\infty}(\Lambda).
\)
\end{enumerate}
In each case, the operator norm equals the inherited
$\ell^{p_{12},\infty}$ quasi-norm of $m$.
\end{proposition}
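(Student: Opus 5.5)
The annular multiplier theorem reduces to the classical description of diagonal multipliers between (quasi-)Lebesgue sequence spaces, used twice: once inside each shell and once across the shells. The basic fact is the following. For a countable index set $I$ and $0<p_1,p_2\leq\infty$,
\[
 \sup_{\|x\|_{\ell^{p_1}(I)}\le1}\|mx\|_{\ell^{p_2}(I)}=\|m\|_{\ell^{p_{12}}(I)} .
\]
Sufficiency is H\"older's inequality $\|mx\|_{\ell^{p_2}}\le\|m\|_{\ell^{p_{12}}}\|x\|_{\ell^{p_1}}$ with $\frac1{p_2}\le\frac1{p_{12}}+\frac1{p_1}$. When $p_2\ge p_1$ this is just $\|mx\|_{\ell^{p_2}}\le\|m\|_\infty\|x\|_{\ell^{p_2}}\le\|m\|_\infty\|x\|_{\ell^{p_1}}$.

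For equality I test against extremals. If $p_{12}=\infty$, take unit vectors $e_i$. If $p_{12}<\infty$, take $x_i=|m_i|^{p_{12}/p_1}$ on a finite subset, normalized, and let the subset exhaust $I$. Then
\[
 \|mx\|_{\ell^{p_2}}=\|m\|_{\ell^{p_{12}}}^{1+p_{12}/p_1}\Big/\|m\|_{\ell^{p_{12}}}^{p_{12}/p_1},
\]
because $p_2(1+p_{12}/p_1)=p_{12}$. Working with finite truncations also keeps everything inside the little spaces.

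For the mixed spaces, put $a_k:=\|m|_{\Lambda_k}\|_{\ell^{p_{12}}}$ and $b_k:=\|x|_{\Lambda_k}\|_{\ell^{p_1}}$. The shell estimate gives $\|(mx)|_{\Lambda_k}\|_{\ell^{p_2}}\le a_kb_k$, and the scalar fact applied to $(a_k)$ on $\mathbb N_0$ yields
\[
 \|M_m\|\le\|(a_k)\|_{\ell^{q_{12}}}=\|m\|_{\ell^{p_{12},q_{12}}}.
\]
For the reverse inequality, choose in each shell a finitely supported $x^{(k)}$ with $\|x^{(k)}\|_{\ell^{p_1}}=1$ and $\|mx^{(k)}\|_{\ell^{p_2}}\ge(1-\varepsilon)a_k$. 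For any finitely supported scalars $(t_k)$, set $x=\sum_k t_kx^{(k)}$. This gives $\|x\|_{\ell^{p_1,q_1}}=\|t\|_{\ell^{q_1}}$ and $\|M_mx\|_{\ell^{p_2,q_2}}\ge(1-\varepsilon)\|(t_ka_k)\|_{\ell^{q_2}}$. Taking the supremum over $t$ with the scalar fact across shells gives $\ge(1-\varepsilon)\|(a_k)\|_{\ell^{q_{12}}}$, so equality holds. Since each test vector $x$ is finitely supported, it lies in $\ell_0^{p_1,\infty}$. Hence replacing the source by $\ell_0^{p_1,\infty}$ changes neither the criterion nor the norm; sufficiency is inherited by restriction.

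The little-target cases are handled one at a time.

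(i) When $q_1<\infty$ and $x\in\ell^{p_1,q_1}$, we have $b_k\to0$, so $a_kb_k\le\|a\|_\infty b_k\to0$ and the image lies in $\ell_0^{p_2,\infty}$.

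(ii) For $x\in\ell_0^{p_1,\infty}$, the bound $a_kb_k\le\|a\|_\infty b_k\to0$ again places the image in the little space. Necessity and the norm come from the finitely supported tests above, whose images are finitely supported and hence lie in $\ell_0^{p_2,\infty}$.

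(iii) If $a_k\to0$, then $a_kb_k\le a_k\|b\|_\infty\to0$ for every $x\in\ell^{p_1,\infty}$, so $M_m$ maps into $\ell_0^{p_2,\infty}$. Conversely, take the non-vanishing test $x=\sum_k x^{(k)}$, which has all $t_k=1$ and lies in $\ell^{p_1,\infty}$ with norm $1$. Then $\|(mx)|_{\Lambda_k}\|_{\ell^{p_2}}\ge(1-\varepsilon)a_k$, and the image being in $\ell_0^{p_2,\infty}$ forces $a_k\to0$. In all three cases the norm is $\|m\|_{\ell^{p_{12},\infty}}$, as already computed.

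The only step needing care is the scalar extremal computation in the quasi-Banach and endpoint regimes. When $p_2>p_1$ the exponent $p_{12}$ equals $\infty$, and when $p_{12}=\infty$ the supremum may not be attained, which is why the argument uses approximate extremals. I expect no real obstacle beyond bookkeeping of the conventions $1/\infty=0$ and $(\cdot)_+$.
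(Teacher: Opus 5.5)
Your proposal is correct and follows essentially the same route as the paper. It uses a blockwise multiplier identity on each shell, then dominates the shell norms by the scalar multiplier $(t_k)\mapsto(a_kt_k)$ from $\ell^{q_1}$ to $\ell^{q_2}$, and obtains lower bounds from test vectors $x|_{\Lambda_k}=t_ku^{(k)}$ with finitely supported amplitudes, which automatically lie in the little spaces; part (iii) uses the all-shells test, as in the paper.

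The only difference is cosmetic. You package the one-level identity as a single lemma on an arbitrary countable index set with approximate (truncated) extremals, whereas the paper uses exact extremizers on the finite shells. One small bookkeeping point: restrict the amplitudes $t_k$ to nonempty shells so that $\|x\|_{\ell^{p_1,q_1}}=\|t\|_{\ell^{q_1}}$ actually holds.
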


\begin{proof}
The argument has two levels: we first compute the multiplier norm on
each annular block and then apply the same calculation to the shell
index.

\smallskip
\noindent
For every $k\geq0$,
\begin{equation}\label{eq:block-multiplier}
 \|M_{m|_{\Lambda_k}}\|_{\ell^{p_1}(\Lambda_k)\to
 \ell^{p_2}(\Lambda_k)}
 =
 \|m|_{\Lambda_k}\|_{\ell^{p_{12}}}.
\end{equation}
Indeed, if $p_2\geq p_1$, then $p_{12}=\infty$ and
\[
 \|m x\|_{\ell^{p_2}}
 \leq
 \|m x\|_{\ell^{p_1}}
 \leq
 \|m|_{\Lambda_k}\|_{\ell^\infty}
 \|x\|_{\ell^{p_1}}.
\]
Testing on a coordinate vector gives equality.

\smallskip
\noindent
If $p_2<p_1<\infty$, then
\[
 \frac1{p_2}
 =
 \frac1{p_1}+\frac1{p_{12}}.
\]
After raising to the $p_2$ power, H\"older's inequality with exponents
$\frac{p_1}{p_2}$ and $\frac{p_{12}}{p_2}$ gives
\[
 \|m x\|_{\ell^{p_2}}
 \leq
 \|m|_{\Lambda_k}\|_{\ell^{p_{12}}}
 \|x\|_{\ell^{p_1}}.
\]
If $m|_{\Lambda_k}\neq0$, equality is attained by taking
\[
 x_\lambda
 =
 \left(
 \frac{|m_\lambda|}
 {\|m|_{\Lambda_k}\|_{\ell^{p_{12}}}}
 \right)^{\frac{p_{12}}{p_1}},
 \qquad \lambda\in\Lambda_k.
\]
Finally, if $p_1=\infty$ and $p_2<\infty$, then $p_{12}=p_2$ and
\[
 \|m x\|_{\ell^{p_2}}
 \leq
 \|m|_{\Lambda_k}\|_{\ell^{p_2}}\|x\|_{\ell^\infty},
\]
with equality for a unimodular choice of $x$ on the support of $m$.
This proves \eqref{eq:block-multiplier}.

\medskip
\noindent
Put
\[
 W_k:=\|m|_{\Lambda_k}\|_{\ell^{p_{12}}},
 \qquad k\geq0.
\]
For $x\in\ell^{p_1,q_1}(\Lambda)$,
\eqref{eq:block-multiplier} gives
\[
 \|(M_mx)|_{\Lambda_k}\|_{\ell^{p_2}}
 \leq
 W_k\|x|_{\Lambda_k}\|_{\ell^{p_1}}.
\]
Hence the action on the shell norms is dominated by the scalar
multiplier
\[
 M_W:\ell^{q_1}\longrightarrow\ell^{q_2},
 \qquad
 (t_k)_{k\geq 0}\longmapsto(W_kt_k)_{k\geq 0}.
\]
Applying the one-level multiplier formula again,
\[
 \|M_W\|_{\ell^{q_1}\to\ell^{q_2}}
 =
 \|W\|_{\ell^{q_{12}}}
 =
 \|m\|_{\ell^{p_{12},q_{12}}},
\]
and therefore
\[
 \|M_m\|
 \leq
 \|m\|_{\ell^{p_{12},q_{12}}}.
\]
For the reverse inequality, fix finitely many nonempty shells and,
on each of them, choose a unit vector $u^{(k)}$ attaining
\eqref{eq:block-multiplier}. Given a finitely supported scalar
sequence $(t_k)_{k\geq 0}$ on these shells, define
\[
 x|_{\Lambda_k}:=t_k u^{(k)}.
\]
Then
\[
 \|x\|_{\ell^{p_1,q_1}}
 =
 \|(t_k)_{k\geq 0}\|_{\ell^{q_1}}
\qquad \text{and} \qquad
 \|M_mx\|_{\ell^{p_2,q_2}}
 =
 \|(W_kt_k)_{k\geq 0}\|_{\ell^{q_2}}.
\]
Thus
\[
 \|M_W\|_{\ell^{q_1}\to\ell^{q_2}}
 \leq
 \|M_m\|_{\ell^{p_1,q_1}\to\ell^{p_2,q_2}}
\]
on every finite collection of shells. Increasing the collection and
using monotonicity gives
\[
 \|m\|_{\ell^{p_{12},q_{12}}}
 \leq
 \|M_m\|_{\ell^{p_1,q_1}\to\ell^{p_2,q_2}}.
\]
This proves the first assertion and the exact norm identity.

\smallskip
\noindent
Suppose $q_1=\infty$. Restriction gives the large-to-little-source
upper bound. Conversely, the preceding lower-bound argument uses only
finitely many shells, and hence all its test vectors belong to
$\ell_0^{p_1,\infty}(\Lambda)$. Therefore the little source has the
same criterion and the same operator norm.

\smallskip
\noindent
Now let $q_2=\infty$. If $q_1<\infty$, then for every
$x\in\ell^{p_1,q_1}(\Lambda)$,
\(
 \|x|_{\Lambda_k}\|_{\ell^{p_1}}\to0.
\) 
Since boundedness is equivalent to
\[
 \sup_{k \geq 0} W_k<\infty,
\]
we obtain
\[
 \|(M_mx)|_{\Lambda_k}\|_{\ell^{p_2}}
 \leq
 W_k\|x|_{\Lambda_k}\|_{\ell^{p_1}}
 \longrightarrow0.
\]
Thus the range automatically lies in
$\ell_0^{p_2,\infty}(\Lambda)$.

\smallskip
\noindent
If $q_1=q_2=\infty$, a bounded sequence $(W_k)_{k \geq 0}$ sends vanishing shell
norms to vanishing shell norms. Hence
\(
 M_m:\ell_0^{p_1,\infty}\to\ell_0^{p_2,\infty}
\)
is bounded exactly when $m\in\ell^{p_{12},\infty}$.

\smallskip
\noindent
Finally,
\(
 M_m:\ell^{p_1,\infty}\to\ell_0^{p_2,\infty}
\)
is bounded if $W_k\to0$. Conversely, assume this mapping is bounded.
The preceding large-target criterion gives $\sup_{k \geq 0} W_k<\infty$.
For each nonempty shell choose a unit vector $u^{(k)}$ attaining
\eqref{eq:block-multiplier}, and set
\(
 x|_{\Lambda_k}:=u^{(k)}
\)
on nonempty shells and zero otherwise. Then
$x\in\ell^{p_1,\infty}(\Lambda)$ and
\[
 \|(M_mx)|_{\Lambda_k}\|_{\ell^{p_2}}
 =
 W_k.
\]
Since $M_mx\in\ell_0^{p_2,\infty}(\Lambda)$, we have $W_k\to0$.
Thus
\[
 m\in\ell_0^{p_{12},\infty}(\Lambda).
\]
The same block tests give the stated exact operator norms.
\end{proof}

\subsection{Diagonal extraction}\label{subsec:diagonal}

Let $B$ be a linear operator acting on sequences indexed by $\Omega$.
For $\omega,\xi\in\Omega$, write
\[
 B_{\omega,\xi}:=(Be_\xi)_\omega
\]
for its matrix coefficients and set
\[
 d_\omega:=B_{\omega,\omega},
 \qquad \omega\in\Omega.
\]
Thus $M_d$ is the diagonal multiplier associated with the diagonal of
$B$.

\smallskip
\noindent
The next lemma shows that, throughout the full quasi-Banach range,
the diagonal multiplier is controlled by the operator itself.  More
precisely, on every finite set the diagonal contribution can be
detected by a suitable choice of Rademacher signs.  The finite-set
form will be used later without any density argument at the large
outer endpoint.

\begin{lemma}[Rademacher diagonal extraction]
\label{lem:diagonal-extraction}
Let $0<p_1,q_1,p_2,q_2\leq\infty$, and suppose that
\(
 B:\ell^{p_1,q_1}(\Omega)
 \longrightarrow
 \ell^{p_2,q_2}(\Omega)
\)
is bounded. Then
\[
 \|M_d\|_{\ell^{p_1,q_1}\to\ell^{p_2,q_2}}
 \lesssim
 \|B\|_{\ell^{p_1,q_1}\to\ell^{p_2,q_2}}.
\]
Moreover, if $E\subset\Omega$ is finite and $x$ is supported in $E$,
then there are signs $\varepsilon_\omega\in\{-1,1\}$,
$\omega\in E$, such that
\[
 \|\mathbf 1_E Bx^\varepsilon\|_{\ell^{p_2,q_2}}
 \gtrsim
 \|(d_\omega x_\omega)_{\omega\in E}\|_{\ell^{p_2,q_2}},
 \qquad
 x^\varepsilon_\omega:=\varepsilon_\omega x_\omega.
\]
The finite-set assertion remains valid when $B$ is defined only on
finitely supported sequences. 
The implicit constants depend only on the target exponents
$p_2,q_2$.
\end{lemma}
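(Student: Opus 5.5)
We prove the finite-set assertion first, by averaging over independent Rademacher signs. Then we deduce the operator bound from it by exhausting $\Omega$ by finite sets. Fix a finite $E\subset\Omega$ and $x$ supported in $E$. Let $(\varepsilon_\omega)_{\omega\in E}$ be independent Rademacher variables, and write $\mathbb E$ for the expectation.

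For each $\eta\in E$ we have
\[
 \varepsilon_\eta (Bx^\varepsilon)_\eta = d_\eta x_\eta + \sum_{\omega\in E\setminus\{\eta\}}\varepsilon_\eta\varepsilon_\omega B_{\eta,\omega}x_\omega ,
\]
so the diagonal term is the mean of a Rademacher chaos. The plan is to use a Khintchine-type lower bound in the form
\[
 \mathbb E\,|(Bx^\varepsilon)_\eta|^{r}\geq c_r\,|d_\eta x_\eta|^{r}\qquad (0<r<\infty).
\]
This follows by conditioning on $\varepsilon_\eta$. Write $(Bx^\varepsilon)_\eta=\varepsilon_\eta d_\eta x_\eta+Y$, where $Y$ does not involve $\varepsilon_\eta$. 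Then $\max_\pm|\pm a+Y|\geq|a|$ pointwise, which gives $\mathbb E|\varepsilon_\eta a+Y|^r\geq 2^{-1}|a|^r$. This holds for every $r>0$ with no Khintchine constant needed, and the same elementary argument handles $r=\infty$ pointwise.

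The main obstacle is passing from these coordinatewise bounds to the mixed quasi-norm $\ell^{p_2,q_2}$. Expectation commutes with neither the inner $\ell^{p_2}$ nor the outer $\ell^{q_2}$ norm. My plan is to use the same conditioning trick at the level of the whole vector, via a symmetrization (contraction) argument. For a single sign flip $\varepsilon_\eta\mapsto-\varepsilon_\eta$, the vector $\mathbf 1_E Bx^\varepsilon$ changes only through the column $\varepsilon_\eta B e_\eta x_\eta$. I would instead compare $\mathbf 1_E Bx^\varepsilon$ with its diagonal part using the decoupling identity
\[
 d_\eta x_\eta e_\eta = \mathbb E\bigl[\varepsilon_\eta\,(Bx^\varepsilon)_\eta\bigr]e_\eta ,
\]
which says that $M_d x$ is the diagonal compression of the random vector $\varepsilon\cdot Bx^\varepsilon$. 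The key claim is that, on the lattice quasi-norm $\ell^{p_2,q_2}$, the map $y\mapsto(\mathbb E\,\varepsilon_\eta y_\eta(\varepsilon))_\eta$ satisfies
\[
 \|(\mathbb E\,\varepsilon_\eta y_\eta)_\eta\|\lesssim \sup_\varepsilon\|y(\varepsilon)\|.
\]
For $p_2,q_2\geq1$ this is Jensen plus the lattice property: $|\mathbb E\,\varepsilon_\eta y_\eta|\leq\mathbb E|y_\eta|$, and the norm of an average is at most the average of the norms. In the quasi-Banach range, I would exploit finiteness of $E$ to write the expectation as a finite average over $2^{|E|}$ sign patterns. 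Since a quasi-norm on an average does not simply contract, I would argue coordinatewise instead. For each $\eta$, the pointwise maximum of $|(Bx^\varepsilon)_\eta|$ over the pair $\{\varepsilon,\varepsilon^{(\eta)}\}$ (where $\varepsilon^{(\eta)}$ flips the $\eta$-th sign) dominates $|d_\eta x_\eta|$. A pigeonhole/probabilistic selection then produces a single sign pattern $\varepsilon$ for which the set of $\eta$ with $|(Bx^\varepsilon)_\eta|\geq|d_\eta x_\eta|$ carries a fixed proportion of the $\ell^{p_2,q_2}$-mass of $M_dx$. Here I would use the mass functional
\[
 \Phi(S)=\Bigl\|\,(d_\eta x_\eta\mathbf 1_S(\eta))_\eta\Bigr\|^{s}_{\ell^{p_2,q_2}},\qquad s=\min\{1,p_2,q_2\}.
\]
For a random $\varepsilon$, each $\eta$ is good with probability at least $1/2$, so the expected indicator of the good set is at least $1/2$. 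For powers of lattice quasi-norms, this should give $\mathbb E\,\Phi(\text{good})\gtrsim\Phi(E)$: raise to the power $\min\{p_2,q_2\}$ and use the reverse estimate for the outer $\ell^{q_2/p_2}$ sum of $\mathbb E$ of inner $\ell^{p_2}$ sums, handling the outer exponent $<1$ versus $\geq1$ by concavity or Jensen as needed. The outer-shell exponent is the step I expect to need the most care. Picking an $\varepsilon$ that realizes at least the expectation gives the finite-set inequality, with constants depending only on $p_2,q_2$. This step uses only the matrix entries $B_{\eta,\omega}$ for $\eta,\omega\in E$, so it holds when $B$ is defined only on finitely supported sequences.

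Finally, for the operator bound, take $x$ finitely supported and $E=\operatorname{supp}x$. Using $\|x^\varepsilon\|=\|x\|$ and $\|\mathbf 1_E y\|\leq\|y\|$, we get
\[
 \|M_dx\|\lesssim\|B\|\,\|x\| .
\]
Since $M_d$ acts coordinatewise, the bound for general $x$ follows from monotone convergence of the $\ell^{p_2,q_2}$ quasi-norm over increasing finite truncations $\mathbf 1_{E_n}x$. This is valid at the large outer endpoint $q_1=\infty$ too, because we only use $\|\mathbf 1_{E_n}x\|\leq\|x\|$ and never density.
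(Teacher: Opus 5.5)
Your argument is correct in outline and uses the same probabilistic mechanism as the paper: isolate one sign, average, select a good realization, then exhaust by finite truncations. The difference is in how you aggregate the coordinatewise information.

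\textbf{How the paper aggregates.} The paper takes essentially your coordinatewise bound with $r=s_2=\min\{1,p_2,q_2\}$, with constant $2^{1-s_2}$ in place of your $2$. It then uses the identity $\|z\|_{\ell^{p_2,q_2}}^{s_2}=\|(|z_\omega|^{s_2})_\omega\|_{\ell^{p_2/s_2,\,q_2/s_2}}$, where $\ell^{p_2/s_2,\,q_2/s_2}$ is a Banach lattice. Monotonicity and Minkowski's inequality $\|\mathbb E W\|\le\mathbb E\|W\|$ in that lattice give $\|M_dx\|^{s_2}\le 2^{1-s_2}\,\mathbb E\|\mathbf 1_EBx^\varepsilon\|^{s_2}$ in one line. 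This covers all exponents, including $\infty$. So the obstacle you describe does not actually arise: only the direction ``norm of the average $\le$ average of the norms'' is needed, and it holds once the quasi-norm is raised to the power $s_2$.

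\textbf{Your good-set route.} It also works, but its crux is exactly the step you left as ``this should give''. With $\Phi$ taken at the power $s$, that step is the same Minkowski inequality. Writing $a_\eta=|d_\eta x_\eta|$, you get $\mathbb E\|a^s\mathbf 1_{G_\varepsilon}\|\ge\|\mathbb E(a^s\mathbf 1_{G_\varepsilon})\|\ge\tfrac12\|a^s\|$.

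If you prefer your case analysis, be careful when the outer exponent is $\theta=q_2/p_2<1$. There Jensen for the concave map $t\mapsto t^\theta$ points the wrong way. What works is the chord bound $Z_k^\theta\ge S_k^{\theta-1}Z_k$ for $0\le Z_k\le S_k$. Here $Z_k$ is the inner sum over the good set in shell $k$ and $S_k$ is the full inner sum over shell $k$.

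Once this step is closed, the good-set detour is superfluous. Your $r=s$ moment bound plus Minkowski is already the complete proof.

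\textbf{Your general ``key claim''.} The claim $\|(\mathbb E\,\varepsilon_\eta y_\eta)_\eta\|\lesssim\sup_\varepsilon\|y(\varepsilon)\|$ is false for arbitrary random $y$ in the quasi-Banach range. For example, in $\ell^p$ with $p<1$, take $y(\varepsilon)=n\,\varepsilon_{\eta^*}e_{\eta^*}$, where $\eta^*(\varepsilon)$ is a function of the signs uniformly distributed on $n$ points. Then the left side is $n^{1/p}$ while the right side is $n$. So you were right not to rely on it. Only its specialization to $y=Bx^\varepsilon$ holds, and that specialization is the lemma itself.
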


\begin{proof}
Set
\[
 s_2:=\min\{1,p_2,q_2\}.
\]
We first work with a finitely supported sequence $x$. Let
$(\varepsilon_\omega)_{\omega \in \Omega}$ be independent Rademacher signs and put
\[
 x^\varepsilon_\omega:=\varepsilon_\omega x_\omega.
\]
For each $\omega$ in the support of $x$,
\[
 (Bx^\varepsilon)_\omega
 =
 \varepsilon_\omega d_\omega x_\omega+y_\omega,
\qquad \text{where} \qquad 
 y_\omega
 :=
 \sum_{\xi\neq \omega}
 B_{\omega, \xi}\varepsilon_\xi x_\xi
\]
is independent of $\varepsilon_\omega$.
Since $0<s_2\leq1$,
\[
 2^{s_2}|a|^{s_2}
 \leq
 |a+y|^{s_2}+|-a+y|^{s_2}.
\]
Averaging this inequality over the remaining Rademacher signs and
using the iterated-expectation identity gives
\[
 |d_\omega x_\omega|^{s_2}
 \le
 2^{1-s_2}
 \mathbb E_\varepsilon
 |(Bx^\varepsilon)_\omega|^{s_2}.
\]
Now
\(
 \ell^{\frac{p_2}{s_2},\frac{q_2}{s_2}}(\Omega)
\)
is a Banach lattice, with the usual interpretation when one of the
exponents is infinite, and
\[
 \|z\|_{\ell^{p_2,q_2}}^{s_2}
 =
 \big\|(|z_\omega|^{s_2})_{\omega\in\Omega}
 \big\|_{\ell^{\frac{p_2}{s_2},\frac{q_2}{s_2}}}.
\]
Hence monotonicity followed by Minkowski's inequality gives
\[
 \|M_dx\|_{\ell^{p_2,q_2}}^{s_2}
 \leq
 2^{1-s_2}
 \left\|
 \mathbb E_\varepsilon
 \big(|(Bx^\varepsilon)_\omega|^{s_2}\big)_{\omega\in\Omega}
 \right\|_{\ell^{\frac{p_2}{s_2},\frac{q_2}{s_2}}}  \leq
 2^{1-s_2}
 \mathbb E_\varepsilon
 \|Bx^\varepsilon\|_{\ell^{p_2,q_2}}^{s_2}.
\]
Since changing signs does not change the
$\ell^{p_1,q_1}$ quasi-norm,
\(
 \|x^\varepsilon\|_{\ell^{p_1,q_1}}
 =
 \|x\|_{\ell^{p_1,q_1}},
\)
and therefore
\[
 \|M_dx\|_{\ell^{p_2,q_2}}
 \lesssim
 \|B\|_{\ell^{p_1,q_1}\to\ell^{p_2,q_2}}
 \|x\|_{\ell^{p_1,q_1}}
\]
for every finitely supported $x$.

\smallskip
\noindent
For a general $x\in\ell^{p_1,q_1}(\Omega)$, let
\[
 x^{(N)}
 :=
 x\,\mathbf 1_{\bigcup_{k=0}^N\Omega_k}.
\]
Since $\Omega$ is separated, $x^{(N)}$ has finite support, and
\[
 \|x^{(N)}\|_{\ell^{p_1,q_1}}
 \leq
 \|x\|_{\ell^{p_1,q_1}}.
\]
Applying the preceding estimate to $x^{(N)}$ and then letting
$N\to\infty$, monotonicity of the shell norms gives
\[
 \|M_dx\|_{\ell^{p_2,q_2}}
 \lesssim
 \|B\|_{\ell^{p_1,q_1}\to\ell^{p_2,q_2}}
 \|x\|_{\ell^{p_1,q_1}}.
\]
Thus
\[
 \|M_d\|_{\ell^{p_1,q_1}\to\ell^{p_2,q_2}}
 \lesssim
 \|B\|_{\ell^{p_1,q_1}\to\ell^{p_2,q_2}}.
\]

\smallskip
\noindent
It remains to prove the finite-set assertion. Let $E\subset\Omega$ be
finite and suppose that $x$ is supported in $E$. Restricting the preceding argument to the coordinates in $E$ gives
\[
 \|(d_\omega x_\omega)_{\omega\in E}\|_{\ell^{p_2,q_2}}^{s_2}
 \leq
 2^{1-s_2}
 \mathbb E_\varepsilon
 \|\mathbf 1_E Bx^\varepsilon\|_{\ell^{p_2,q_2}}^{s_2}.
\]
Hence, for at least one choice of the signs,
\[
 \|\mathbf 1_E Bx^\varepsilon\|_{\ell^{p_2,q_2}}
 \gtrsim
 \|(d_\omega x_\omega)_{\omega\in E}\|_{\ell^{p_2,q_2}}.
\]
This argument only involves finitely supported input vectors, so the
same conclusion remains valid when $B$ is defined only on finitely
supported sequences.
\end{proof}

\subsection{Toeplitz matrices on the sampling lattice}
\label{subsec:toeplitz-matrices}

We now combine the preceding three mechanisms in two matrices
associated with the Toeplitz operator on the fixed sampling lattice.
The first is adapted to kernel testing, while the second will be used
for reconstruction at the outer endpoint.
We retain the fixed sampling lattice $\Omega$ and the associated
square partition $(Q_\omega)_{\omega\in\Omega}$ from
Subsection~\ref{subsec:lattice}. Whenever the integral is defined, set
\[
 C^\mu_{\gamma,\lambda}
 :=
 \frac{\alpha}{\pi}
 (1+|\gamma|)^{\nu_2}
 (1+|\lambda|)^{-\nu_1}
 e^{-\frac{\alpha}{2}|\gamma|^2}
 \int_{\mathbb C}
 \kappa_{\alpha,\lambda}(w)
 e^{\alpha\gamma\overline w-\alpha|w|^2}\,d\mu(w),
 \qquad \gamma,\lambda\in\Omega.
\]
On finitely supported inputs, $C^\mu$ is the matrix of
\[
 R_{\alpha,\Omega}^{p_2,q_2}
 T_\mu^\alpha
 S_{\alpha,\Omega}^{p_1,q_1}.
\]
For $\omega\in\Omega$, define the weighted local-mass
sequence $m=(m_\omega)_{\omega\in\Omega}$ by
\begin{equation}\label{eq-weighted-local-mass}
m_\omega
:=
(1+|\omega|)^{\nu_2-\nu_1}
\widehat\mu_{R_*}(\omega),
\qquad \omega\in\Omega,
\end{equation}
where $R_*$ is fixed in \eqref{eq-R*}.

\begin{proposition}[Square Toeplitz testing matrix]
\label{prop:toeplitz-testing-matrix}
Let $C^\mu$ and $m=(m_\omega)_{\omega\in\Omega}$ be as above.

\begin{enumerate}[label=\textup{(\alph*)}]
\item The diagonal of $C^\mu$ is given by
\begin{equation}\label{eq:testing-diagonal}
 C^\mu_{\lambda,\lambda}
 =
 \frac{\alpha}{\pi}
 (1+|\lambda|)^{\nu_2-\nu_1}
 \widetilde\mu_\alpha(\lambda),
 \qquad \lambda\in\Omega.
\end{equation}

\item For all $\gamma,\lambda\in\Omega$,
\[
 |C^\mu_{\gamma,\lambda}|
 \lesssim
 \sum_{\omega\in\Omega}
 e^{-\frac{\alpha}{4}|\gamma-\omega|^2}
 m_\omega
 e^{-\frac{\alpha}{4}|\omega-\lambda|^2}.
\]

\item If \(m \in \ell^{p_{12}, \infty}(\Omega)\),
then
\begin{equation}\label{eq:testing-shell}
 \|\mathbf 1_{\Omega_\ell}C^\mu x\|_{\ell^{p_2}}
 \lesssim
 \|m\|_{\ell^{p_{12}, \infty}} e^{-\frac{\alpha}{16}|\ell-k|^2}
 \|x\|_{\ell^{p_1}},
 \qquad
 \operatorname{supp}x\subset\Omega_k.
\end{equation}
Moreover, let $\varepsilon_0>0$, and suppose that $k_j\to\infty$ and that $x^{(j)}$ is supported in
$\Omega_{k_j}$ with
\[
 \|x^{(j)}\|_{\ell^{p_1}}\leq1,
 \qquad
 \|\mathbf 1_{\Omega_{k_j}}C^\mu x^{(j)}\|_{\ell^{p_2}}
 \geq\varepsilon_0.
\]
Then, after passing to a sufficiently sparse subsequence of the
shells, the vectors $C^\mu x^{(j)}$ are pairwise separated in
$\ell^{p_2,q_2}(\Omega)$.
\end{enumerate}
The implicit constants in the estimates of parts~(b)
and~(c) depend only on $\alpha$, the fixed exponent pairs,
the working radius $R_*$, and the fixed sampling-lattice
geometry. In the final assertion
of part~(c), the spacing of the selected subsequence may also depend on $\varepsilon_0$ and
$\|m\|_{\ell^{p_{12},\infty}(\Omega)}$.
\end{proposition}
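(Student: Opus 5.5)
Part~(a) is a direct computation. For $\gamma=\lambda$ the integrand is
\[
 \kappa_{\alpha,\lambda}(w)e^{\alpha\lambda\overline w-\alpha|w|^2}
 =e^{\alpha w\overline\lambda+\alpha\lambda\overline w-\frac{\alpha}{2}|\lambda|^2-\alpha|w|^2}
 =e^{-\alpha|w-\lambda|^2}e^{\frac{\alpha}{2}|\lambda|^2}.
\]
This identity uses $2\operatorname{Re}(w\overline\lambda)-|w|^2=|\lambda|^2-|w-\lambda|^2$. Multiplying by $e^{-\frac{\alpha}{2}|\lambda|^2}(1+|\lambda|)^{\nu_2-\nu_1}$ gives \eqref{eq:testing-diagonal}.

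For part~(b), the same completion of squares in modulus gives
\[
\bigl|e^{-\frac{\alpha}{2}|\gamma|^2}\kappa_{\alpha,\lambda}(w)e^{\alpha\gamma\overline w-\alpha|w|^2}\bigr|
=e^{-\frac{\alpha}{2}|w-\gamma|^2}e^{-\frac{\alpha}{2}|w-\lambda|^2}.
\]
I would split $\mu=\sum_\omega\mu|_{Q_\omega}$ using the exact partition. For $w\in Q_\omega\subset B(\omega,\Delta_\Omega)$, the inequality $|w-\gamma|^2\ge\frac12|\omega-\gamma|^2-\Delta_\Omega^2$ bounds each factor by a constant times $e^{-\frac{\alpha}{4}|\omega-\gamma|^2}$, respectively $e^{-\frac{\alpha}{4}|\omega-\lambda|^2}$. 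Also $\mu(Q_\omega)\le\widehat\mu_{R_*}(\omega)$ because $R_*>\Delta_\Omega$. The remaining weights are $(1+|\gamma|)^{\nu_2}(1+|\lambda|)^{-\nu_1}$. I would rewrite them as $(1+|\omega|)^{\nu_2-\nu_1}$ times $\bigl((1+|\gamma|)/(1+|\omega|)\bigr)^{\nu_2}\bigl((1+|\omega|)/(1+|\lambda|)\bigr)^{\nu_1}$. Each ratio is at most $C(1+|\gamma-\omega|)^{|\nu_2|}$, respectively $C(1+|\omega-\lambda|)^{|\nu_1|}$, and these polynomial losses cannot be absorbed into the Gaussians at exactly the rate $\frac{\alpha}{4}$. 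The fix is to bound the kernel factors first by $e^{-\frac{\alpha}{2}\cdot\frac{2}{3}|\cdot|^2}$-type Gaussians with a slightly better constant (e.g.\ use $|w-\gamma|^2\ge \frac{2}{3}|\omega-\gamma|^2-2\Delta_\Omega^2$), then absorb the polynomials and land at $\frac{\alpha}{4}$. This is routine.

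For the shell estimate in part~(c), I would write the bound of (b) as $|C^\mu|\lesssim G M_m G$ entrywise, with $G_{\xi,\eta}=e^{-\frac{\alpha}{4}|\xi-\eta|^2}$. Let $x$ be supported in $\Omega_k$. Apply Proposition~\ref{prop:gaussian-matrices} with $c=\alpha/4$: the shell-$k$ input spreads to shell $j$ with factor $e^{-\frac{\alpha}{8}|j-k|^2}$ in $\ell^{p_1}$. Then the block multiplier bound \eqref{eq:block-multiplier} gives $\|m|_{\Omega_j}\|_{\ell^{p_{12}}}\le\|m\|_{\ell^{p_{12},\infty}}$, converting $\ell^{p_1}$ to $\ell^{p_2}$. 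A second Gaussian step from shell $j$ to shell $\ell$ contributes $e^{-\frac{\alpha}{8}|\ell-j|^2}$. Summing over $j$ with the $s$-triangle inequality, where $s=\min\{1,p_2\}$, and using $|\ell-j|^2+|j-k|^2\ge\frac12|\ell-k|^2$, yields $e^{-\frac{\alpha}{16}|\ell-k|^2}$ up to constants; the $j$-sum converges. One caveat: $|C^\mu x|\le GM_mG|x|$ entrywise, and positivity lets the lattice bounds apply.

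For the separation assertion, choose $k_{j}$ sparse and compare $\|C^\mu x^{(i)}-C^\mu x^{(j)}\|_{\ell^{p_2,q_2}}$ with $i\ne j$. This quantity is at least the shell-$k_i$ block norm. By \eqref{eq:testing-shell} with $s=\min\{1,p_2\}$, that block norm is at least
\[
\bigl(\varepsilon_0^{s}-(C\|m\|e^{-\frac{\alpha}{16}|k_i-k_j|^2})^{s}\bigr)^{1/s},
\]
which exceeds $\varepsilon_0/2$ once the gaps $|k_i-k_j|$ exceed a threshold depending on $\varepsilon_0$ and $\|m\|$. The main obstacle is bookkeeping the polynomial weights in (b), so that the Gaussian rate $\frac{\alpha}{4}$ survives; everything else follows from the earlier propositions.
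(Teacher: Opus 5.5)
Your proposal is correct and follows the paper's proof essentially step for step. You use the completion-of-squares identity for (a); for (b), the exact cell decomposition with $\mu(Q_\omega)\le\widehat\mu_{R_*}(\omega)$ and polynomial moderation; and for (c), the entrywise domination $|C^\mu x|\lesssim GM_mG|x|$ together with Proposition~\ref{prop:gaussian-matrices}, the block multiplier bound \eqref{eq:block-multiplier}, and an $s$-triangle argument for both the shell estimate and the separation. Your extra bookkeeping only makes explicit what the paper leaves tacit. This includes the slightly stronger Gaussian taken before absorbing the polynomial weights. It also includes the identity $|\ell-j|^2+|j-k|^2=\frac12|\ell-k|^2+2|j-\frac{\ell+k}{2}|^2$, which you need so that the $j$-sum converges at the exact rate $\frac{\alpha}{16}$; the inequality $\ge\frac12|\ell-k|^2$ alone would leave a divergent sum.
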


\begin{proof}
We prove the three assertions in order.

\smallskip
\noindent
(a) Taking $\gamma=\lambda$ in the definition of $C^\mu$ gives
\[
 e^{-\frac{\alpha}{2}|\lambda|^2}
 \kappa_{\alpha,\lambda}(w)
 e^{\alpha\lambda\overline w-\alpha|w|^2}
 =
 e^{-\alpha|w-\lambda|^2}.
\]
Therefore
\[
C^\mu_{\lambda,\lambda}
 =
 \frac{\alpha}{\pi}
 (1+|\lambda|)^{\nu_2-\nu_1}
 \int_{\mathbb C}
 e^{-\alpha|w-\lambda|^2}\,d\mu(w)=
 \frac{\alpha}{\pi}
 (1+|\lambda|)^{\nu_2-\nu_1}
 \widetilde\mu_\alpha(\lambda).
\]

\smallskip
\noindent
(b) Taking absolute values in the definition of $C^\mu$, we obtain
\[
 |C^\mu_{\gamma,\lambda}|
 \lesssim
 (1+|\gamma|)^{\nu_2}
 (1+|\lambda|)^{-\nu_1}
 \int_{\mathbb C}
 e^{-\frac{\alpha}{2}|w-\gamma|^2}
 e^{-\frac{\alpha}{2}|w-\lambda|^2}\,d\mu(w).
\]
Decompose the integral over the exact square partition
\[
 \mathbb C=\bigsqcup_{\omega\in\Omega}Q_\omega.
\]
Since $Q_\omega\subset B(\omega, \Delta_\Omega)$, for $w\in Q_\omega$
the bounded displacement from $w$ to $\omega$, together with
polynomial moderation, gives
\[
 (1+|\gamma|)^{\nu_2}
 e^{-\frac{\alpha}{2}|w-\gamma|^2}
 \lesssim
 (1+|\omega|)^{\nu_2}
 e^{-\frac{\alpha}{4}|\gamma-\omega|^2},
\]
and similarly
\[
 (1+|\lambda|)^{-\nu_1}
 e^{-\frac{\alpha}{2}|w-\lambda|^2}
 \lesssim
 (1+|\omega|)^{-\nu_1}
 e^{-\frac{\alpha}{4}|\omega-\lambda|^2}.
\]
Hence
\[
  |C^\mu_{\gamma,\lambda}|
 \lesssim
 \sum_{\omega\in\Omega}
 e^{-\frac{\alpha}{4}|\gamma-\omega|^2}
 (1+|\omega|)^{\nu_2-\nu_1}
 \mu(Q_\omega)
 e^{-\frac{\alpha}{4}|\omega-\lambda|^2}\leq
 \sum_{\omega\in\Omega}
 e^{-\frac{\alpha}{4}|\gamma-\omega|^2}
 m_\omega
 e^{-\frac{\alpha}{4}|\omega-\lambda|^2},
\]
because $Q_\omega\subset B(\omega,R_*)$.

\smallskip
\noindent
(c) Let
\(
 G_{\xi,\zeta}
 :=
 e^{-\frac{\alpha}{4}|\xi-\zeta|^2}
\) and \(M: = \|m\|_{\ell^{p_{12}, \infty}}\).
By part~(b),
\[
 |C^\mu x|\lesssim GM_mG|x|.
\]
Suppose that $\operatorname{supp}x\subset\Omega_k$. By
Proposition~\ref{prop:gaussian-matrices},
\[
 \|\mathbf 1_{\Omega_j}G|x|\|_{\ell^{p_1}}
 \lesssim
 e^{-\frac{\alpha}{8}|j-k|^2}
 \|x\|_{\ell^{p_1}}.
\]
By \eqref{eq:block-multiplier} in Proposition~\ref{prop:annular-multipliers},
\[
 \|\mathbf 1_{\Omega_j}M_mG|x|\|_{\ell^{p_2}}
 \leq
 \|m|_{\Omega_j}\|_{\ell^{p_{12}}}
 \|\mathbf 1_{\Omega_j}G|x|\|_{\ell^{p_1}} \lesssim
 M e^{-\frac{\alpha}{8}|j-k|^2}
 \|x\|_{\ell^{p_1}}.
\]
Set
\[
 s:=\min\{1,p_2\}.
\]
Applying Proposition~\ref{prop:gaussian-matrices} to the outer
Gaussian and using the $s$-triangle inequality,
\[
 \|\mathbf 1_{\Omega_\ell}C^\mu x\|_{\ell^{p_2}}^s
 \lesssim
 \sum_{j\geq0}
 e^{-\frac{\alpha s}{8}|\ell-j|^2}
 \|\mathbf 1_{\Omega_j}M_mG|x|\|_{\ell^{p_2}}^s \lesssim
 M^s\|x\|_{\ell^{p_1}}^s
 \sum_{j\geq0}
 e^{-\frac{\alpha s}{8}|\ell-j|^2}
 e^{-\frac{\alpha s}{8}|j-k|^2}.
\]
Since
\[
 \sum_{j\geq0}
 e^{-\frac{\alpha s}{8}|\ell-j|^2}
 e^{-\frac{\alpha s}{8}|j-k|^2}
 \lesssim
 e^{-\frac{\alpha s}{16}|\ell-k|^2},
\]
we conclude that
\[
 \|\mathbf 1_{\Omega_\ell}C^\mu x\|_{\ell^{p_2}}
 \lesssim
 M e^{-\frac{\alpha}{16}|\ell-k|^2}
 \|x\|_{\ell^{p_1}}.
\]

\smallskip
\noindent
Finally, let $x^{(j)}$ and $k_j$ be as in the
statement. By passing to a sufficiently sparse
subsequence, the off-diagonal estimate just proved
gives, for $i\ne j$,
\[
 \|\mathbf 1_{\Omega_{k_j}}C^\mu x^{(i)}\|_{\ell^{p_2}}
 \leq
 2^{-\frac{1}{s}}\varepsilon_0.
\]
Hence
\[
 \|
 \mathbf 1_{\Omega_{k_j}}
 (C^\mu x^{(j)}-C^\mu x^{(i)})
 \|_{\ell^{p_2}}^s
 \geq
 \|\mathbf 1_{\Omega_{k_j}}C^\mu x^{(j)}\|_{\ell^{p_2}}^s
 -
 \|\mathbf 1_{\Omega_{k_j}}C^\mu x^{(i)}\|_{\ell^{p_2}}^s\geq
 \frac12\varepsilon_0^s.
\]
Since the $\ell^{p_2,q_2}$ quasi-norm dominates each shell
$\ell^{p_2}$ norm,
\[
 \|C^\mu x^{(j)}-C^\mu x^{(i)}\|_{\ell^{p_2,q_2}}
 \geq
 2^{-\frac{1}{s}}\varepsilon_0.
\]
Thus the vectors $C^\mu x^{(j)}$ are pairwise separated in
$\ell^{p_2,q_2}(\Omega)$.
\end{proof}

\smallskip
\noindent
For mappings into the outer endpoint, we also use the matrix
associated with the canonical dual-atom reconstruction. Assume
$q_2=\infty$. Then
\(
 \nu_2=-\frac1{p_2}.
\)
Whenever the integral is defined, set
\[
 D^\mu_{\gamma,\lambda}
 :=
 T_\mu^\alpha
 \big((1+|\lambda|)^{-\nu_1}\varphi_\lambda\big)(\gamma)
 e^{-\frac{\alpha}{2}|\gamma|^2}
 (1+|\gamma|)^{\nu_2},
 \qquad \gamma,\lambda\in\Omega.
\]
We retain the weighted local-mass sequence
$m=(m_\omega)_{\omega\in\Omega}$
defined in \eqref{eq-weighted-local-mass}.

\begin{proposition}[Endpoint reconstruction matrix]
\label{prop:endpoint-reconstruction}
Let $D^\mu$ and $m=(m_\omega)_{\omega\in\Omega}$ be as above.

\begin{enumerate}[label=\textup{(\alph*)}]
\item For every $L>0$,
\begin{equation}\label{eq:endpoint-entrywise}
 |D^\mu_{\gamma,\lambda}|
 \lesssim
 \sum_{\omega\in\Omega}
 e^{-\frac{\alpha}{8}|\gamma-\omega|^2}
 m_\omega
 (1+|\omega-\lambda|)^{-L},
 \qquad \gamma,\lambda\in\Omega.
\end{equation}
Equivalently, if
\( G_{\gamma,\omega}
 :=
 e^{-\frac{\alpha}{8}|\gamma-\omega|^2}\),
and \(K_L(\omega,\lambda)
 :=
 (1+|\omega-\lambda|)^{-L}\),
then
\begin{equation}\label{eq:endpoint-matrix-majorant}
 |D^\mu|\lesssim GM_mK_L .
\end{equation}

\item Suppose that $m\in\ell^{p_{12},\infty}(\Omega)$. Then, for $L$ sufficiently large, $D^\mu$ is absolutely defined on
$\ell^{p_1,q_1}(\Omega)$ and
\(
 D^\mu:
 \ell^{p_1,q_1}(\Omega)
 \longrightarrow
 \ell^{p_2,\infty}(\Omega)
\)
is bounded. More precisely,
\[
 |D^\mu x|
 \lesssim
 GM_mK_L|x|.
\]
Moreover, for every
$f\in\mathcal F_\alpha^{p_1,q_1}$,
\begin{equation}\label{eq:endpoint-action}
 R_{\alpha,\Omega}^{p_2,\infty}T_\mu^\alpha f
 =
 D^\mu R_{\alpha,\Omega}^{p_1,q_1}f.
\end{equation}
The matrix series in \eqref{eq:endpoint-action} converges absolutely
in each coordinate, and the corresponding Toeplitz integral is
absolutely convergent.
\end{enumerate}
Here $L$ is a fixed localization order, chosen sufficiently
large in part~(b). The implicit constants depend only on
$L$, $\alpha$, the fixed exponent pairs, and the fixed
sampling-lattice data.
\end{proposition}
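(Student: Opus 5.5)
For part (a), I will start from the definition of $D^\mu_{\gamma,\lambda}$ and write out the Toeplitz integral:
\[
 D^\mu_{\gamma,\lambda}
 =\frac{\alpha}{\pi}(1+|\gamma|)^{\nu_2}(1+|\lambda|)^{-\nu_1}
 e^{-\frac{\alpha}{2}|\gamma|^2}
 \int_{\mathbb C}\varphi_\lambda(w)e^{\alpha\gamma\overline w-\alpha|w|^2}\,d\mu(w).
\]
Taking absolute values gives $|e^{\alpha\gamma\overline w}|e^{-\frac{\alpha}{2}|\gamma|^2-\frac\alpha2|w|^2}=e^{-\frac\alpha2|w-\gamma|^2}$. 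Rapid Fock localization of the canonical dual atoms (\cite[Proposition 2.2]{SI}) gives $|\varphi_\lambda(w)|e^{-\frac\alpha2|w|^2}\lesssim_N(1+|w-\lambda|)^{-N}$. Hence
\[
 |D^\mu_{\gamma,\lambda}|\lesssim (1+|\gamma|)^{\nu_2}(1+|\lambda|)^{-\nu_1}\int e^{-\frac\alpha2|w-\gamma|^2}(1+|w-\lambda|)^{-N}\,d\mu(w).
\]
I then decompose over the exact square partition $\mathbb C=\bigsqcup Q_\omega$, exactly as in Proposition~\ref{prop:toeplitz-testing-matrix}(b). For $w\in Q_\omega$, the displacement $|w-\omega|\le\Delta_\Omega$ together with polynomial moderation gives
\[
 (1+|\gamma|)^{\nu_2}e^{-\frac\alpha2|w-\gamma|^2}\lesssim(1+|\omega|)^{\nu_2}e^{-\frac\alpha8|\gamma-\omega|^2}.
\]
Here I absorb $(1+|\gamma|)^{\nu_2}/(1+|\omega|)^{\nu_2}\lesssim(1+|\gamma-\omega|)^{|\nu_2|}$ into the Gaussian, with room to spare. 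On the other side,
\[
 (1+|\lambda|)^{-\nu_1}(1+|w-\lambda|)^{-N}\lesssim(1+|\omega|)^{-\nu_1}(1+|\omega-\lambda|)^{-N+|\nu_1|}.
\]
Choosing $N=L+|\nu_1|$ and using $\mu(Q_\omega)\le\widehat\mu_{R_*}(\omega)$ then yields \eqref{eq:endpoint-entrywise}, and \eqref{eq:endpoint-matrix-majorant} is just its matrix form.

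For the boundedness claim in part (b), I will estimate $GM_mK_L|x|$ shell by shell. The Gaussian factor is handled by Proposition~\ref{prop:gaussian-matrices}, and the diagonal factor by the block identity \eqref{eq:block-multiplier} together with $m\in\ell^{p_{12},\infty}$. The new ingredient is an analogue of \eqref{eq:gaussian-shell} for the polynomial kernel $K_L$. For $\omega\in\Omega_\ell$ and $\lambda\in\Omega_k$ one has $|\omega-\lambda|\ge(|\ell-k|-1)_+$, so
\[
 K_L(\omega,\lambda)\lesssim(1+|\ell-k|)^{-L/2}(1+|\omega-\lambda|)^{-L/2}.
\]
For $L/2>2/\min\{1,p_1\}$, the Schur test (for $p_1\ge1$) or $p_1$-subadditivity (for $p_1<1$) then gives
\[
 \|\mathbf 1_{\Omega_\ell}K_L\mathbf 1_{\Omega_k}x\|_{\ell^{p_1}}\lesssim(1+|\ell-k|)^{-L/2}\|x|_{\Omega_k}\|_{\ell^{p_1}}.
\]

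Combining the three factors with the $s$-triangle inequality, the target shell norm satisfies
\[
 \|(D^\mu x)|_{\Omega_j}\|_{\ell^{p_2}}^s\lesssim \|m\|^s\sum_k(1+|j-k|)^{-Ls/2}\|x|_{\Omega_k}\|_{\ell^{p_1}}^s .
\]
The key point is the weight: $\|x|_{\Omega_k}\|_{\ell^{p_1}}$ is only controlled in $\ell^{q_1}$ with no growth in $k$, so I just need this polynomial convolution to map $\ell^{q_1/s}$ into $\ell^\infty$. Since $\ell^{q_1}\subset\ell^\infty$, that holds once $Ls/2>1$. The same computation also shows that the coordinate series converge absolutely.

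For \eqref{eq:endpoint-action}, I will insert the reconstruction $f=\sum_\lambda f(\lambda)e^{-\frac\alpha2|\lambda|^2}\varphi_\lambda$ (\cite[Proposition 3.6]{SI}) into the Toeplitz integral and interchange the sum and the $\mu$-integral. This is the main obstacle. The interchange needs the majorant
\[
 \int\sum_\lambda|f(\lambda)|e^{-\frac\alpha2|\lambda|^2}|\varphi_\lambda(w)|\,e^{-\frac\alpha2|w-\gamma|^2}e^{-\frac\alpha2|w|^2}\,d\mu(w)<\infty,
\]
which after the same cell decomposition is exactly the $\gamma$-coordinate of $GM_mK_L|R^{p_1,q_1}_{\alpha,\Omega}f|$. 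This is finite by the boundedness just proved together with the boundedness of $R_{\alpha,\Omega}^{p_1,q_1}$. Fubini--Tonelli then justifies both the absolute convergence of the Toeplitz integral and the coordinatewise identity, since the normalizations $(1+|\lambda|)^{\pm\nu_1}$ cancel between $R^{p_1,q_1}_{\alpha,\Omega}f$ and $D^\mu$. This argument needs only absolute, locally uniform convergence of the reconstruction and no norm density, so it works uniformly including $q_1=\infty$.
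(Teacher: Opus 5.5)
Your proposal is correct and follows essentially the same route as the paper. In (a) you use rapid localization of $\varphi_\lambda$, the exact square-cell decomposition, and polynomial moderation with $N\ge L+|\nu_1|$. In (b) you combine a shell-wise polynomial-decay estimate for $K_L$ with the block multiplier bound and the Gaussian shell estimate, then apply Tonelli with the majorant $GM_mK_L|x|$ to interchange the dual-atom reconstruction and the Toeplitz integral.

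The only minor difference is organizational. You fold all three factors into a single shell convolution controlled by $\sup_k\|x|_{\Omega_k}\|_{\ell^{p_1}}$. The paper instead first proves that $K_L$ is bounded on every $\ell^{p,q}(\Omega)$ and on $\ell_0^{p,\infty}(\Omega)$, and also notes locally uniform convergence of the Toeplitz integral in $z$; it reuses these facts later for the little-endpoint theorems.
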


\begin{proof}
(a) Fix $L>0$. By the rapid localization of the canonical dual atoms,
for every sufficiently large $N$,
\[
 |\varphi_\lambda(w)|
 e^{-\frac{\alpha}{2}|w|^2}
 \lesssim_N
 (1+|w-\lambda|)^{-N}.
\]
Consequently,
\[
 \begin{aligned}
 |D^\mu_{\gamma,\lambda}|
 &\lesssim
 (1+|\gamma|)^{\nu_2}
 (1+|\lambda|)^{-\nu_1}
 \int_{\mathbb C}
 e^{-\frac{\alpha}{2}|\gamma-w|^2}
 (1+|w-\lambda|)^{-N}\,d\mu(w).
 \end{aligned}
\]
Decompose the integral over the exact square partition
\[
 \mathbb C=\bigsqcup_{\omega\in\Omega}Q_\omega .
\]
Since $Q_\omega$ has uniformly bounded diameter, for
$w\in Q_\omega$,
\[
 e^{-\frac{\alpha}{2}|\gamma-w|^2}
 \lesssim
 e^{-\frac{\alpha}{4}|\gamma-\omega|^2},
 \qquad
 (1+|w-\lambda|)^{-N}
 \lesssim
 (1+|\omega-\lambda|)^{-N}.
\]
Hence
\[
 \begin{aligned}
 |D^\mu_{\gamma,\lambda}|
 &\lesssim
 (1+|\gamma|)^{\nu_2}
 (1+|\lambda|)^{-\nu_1}
 \sum_{\omega\in\Omega}
 e^{-\frac{\alpha}{4}|\gamma-\omega|^2}
 (1+|\omega-\lambda|)^{-N}
 \widehat\mu_{R_*}(\omega).
 \end{aligned}
\]
As in the proof of Proposition~\ref{prop:toeplitz-testing-matrix},
polynomial moderation gives
\[
 (1+|\gamma|)^{\nu_2}
 \lesssim
 (1+|\omega|)^{\nu_2}
 (1+|\gamma-\omega|)^{|\nu_2|}
\qquad \text{and} \qquad
 (1+|\lambda|)^{-\nu_1}
 \lesssim
 (1+|\omega|)^{-\nu_1}
 (1+|\omega-\lambda|)^{|\nu_1|}.
\]
The first polynomial loss is absorbed by the Gaussian, while the
second is absorbed by choosing $N > L + |\nu_1|$. Thus
\[
 |D^\mu_{\gamma,\lambda}|
 \lesssim
 \sum_{\omega\in\Omega}
 e^{-\frac{\alpha}{8}|\gamma-\omega|^2}
 m_\omega
 (1+|\omega-\lambda|)^{-L},
\]
which proves \eqref{eq:endpoint-entrywise} and
\eqref{eq:endpoint-matrix-majorant}.

\medskip
\noindent
(b) Choose $L$ sufficiently large. Given any prescribed $A>0$, $L$
may be chosen so that the residual polynomial kernel is summable on
the separated lattice, also after taking the fixed subunit powers
occurring below. If $\lambda\in\Omega_k$ and $\omega\in\Omega_j$,
then
\[
 |\omega-\lambda|
 \ge (|j-k|-1)_+.
\]
The same shell argument as in
Proposition~\ref{prop:gaussian-matrices} therefore gives
\[
 \|\mathbf1_{\Omega_j}
   K_L\mathbf1_{\Omega_k}x\|_{\ell^p}
 \lesssim
 (1+|j-k|)^{-A}
 \|x|_{\Omega_k}\|_{\ell^p}
\]
for every fixed exponent $0<p\le\infty$ occurring here. The implicit constant may depend on $A$, $L$, $p$,
and the fixed sampling-lattice geometry.
Convolution on the shell index yields
\[
 K_L:\ell^{p,q}(\Omega)\longrightarrow\ell^{p,q}(\Omega)
\]
for all relevant $p,q$, and
\[
 K_L:\ell_0^{p,\infty}(\Omega)
 \longrightarrow
 \ell_0^{p,\infty}(\Omega).
\]

\medskip
\noindent
Assume \(m \in \ell^{p_{12}, \infty}\). By
Proposition~\ref{prop:annular-multipliers},
\(
 M_m:
 \ell^{p_1,\infty}(\Omega)
 \longrightarrow
 \ell^{p_2,\infty}(\Omega)
\)
is bounded, with operator quasi-norm comparable to
\(
 \|m\|_{\ell^{p_{12}, \infty}}.
\)
Proposition~\ref{prop:gaussian-matrices} controls the final Gaussian
matrix $G$. If $x\in\ell^{p_1,q_1}(\Omega)$, then the boundedness
of $K_L$ established above gives
\[
 K_L|x|\in\ell^{p_1,q_1}(\Omega)
 \subset\ell^{p_1,\infty}(\Omega).
\]
Hence
\[
 GM_mK_L|x|\in\ell^{p_2,\infty}(\Omega).
\]
In particular, the right-hand side is finite in every coordinate.
The estimate used in the proof of part~(a), with absolute values
taken inside the Toeplitz integral, therefore shows first that the
entries of $D^\mu$ are absolutely defined and then that
\[
 |D^\mu x|
 \lesssim
 GM_mK_L|x|.
\]
Let now
\( x=R_{\alpha,\Omega}^{p_1,q_1}f.
\)
The canonical dual-atom reconstruction from
Subsection~\ref{subsec:lattice} gives
\[
 f
 =
 \sum_{\lambda\in\Omega}
 x_\lambda
 (1+|\lambda|)^{-\nu_1}\varphi_\lambda .
\]
For each $\gamma\in\Omega$, the preceding estimates before
integration give
\[
 \sum_{\lambda\in\Omega}
 |x_\lambda|(1+|\lambda|)^{-\nu_1}
 \int_{\mathbb C}
 |\varphi_\lambda(w)|
 e^{\alpha\operatorname{Re}(\gamma\overline w)
    -\alpha|w|^2-\frac{\alpha}{2}|\gamma|^2}
 (1+|\gamma|)^{\nu_2}\,d\mu(w) \lesssim
 (GM_mK_L|x|)_\gamma
 <\infty .
\]
Thus the sum of the absolute integrands is $\mu$-integrable.
Tonelli's theorem gives absolute convergence and permits
interchange of the dual-atom series and the Toeplitz integral. 
The same cell estimates, together with the
pointwise Fock growth bound and
$\mu(Q_\omega)\le
M(1+|\omega|)^{\nu_1-\nu_2}$,
also give locally uniform absolute convergence
of the Toeplitz integral for $z\in\mathbb C$.
Therefore
\[
 R_{\alpha,\Omega}^{p_2,\infty}T_\mu^\alpha f
 =
 D^\mu x
 =
 D^\mu R_{\alpha,\Omega}^{p_1,q_1}f,
\]
which proves \eqref{eq:endpoint-action}. In particular, no
norm-density argument is used when $q_1=\infty$.
\end{proof}

\section{Local masses and Gaussian Berezin profiles}
\label{sec:comparison}

This section passes between the continuous local-mass and Gaussian
Berezin criteria and their discrete counterparts, including their
asymptotic tails. The argument has three steps: first we remove the
dependence on the radius in the local-mass function, then discretize the
local masses on bounded-geometry cells, and finally compare the resulting
discrete profile with the Gaussian Berezin profile. For $R>0$ and $\sigma\in\mathbb R$, set
\[
 h_{R,\sigma}(z)
 :=
 (1+|z|)^\sigma \widehat\mu_R(z),
 \qquad
 g_{\alpha,\sigma}(z)
 :=
 (1+|z|)^\sigma \widetilde\mu_\alpha(z).
\]
We shall use exact bounded-geometry cell systems. By this we mean a
separated configuration $\Lambda\subset\mathbb C$ together with Borel
sets $(Q_\lambda)_{\lambda\in\Lambda}$ such that
\begin{equation}\label{eq:cell-system}
 \mathbb C=\bigsqcup_{\lambda\in\Lambda}Q_\lambda,
 \qquad
 B(\lambda,c_0)\subset Q_\lambda\subset B(\lambda,C_0),
 \qquad
 0<c_0<C_0<\infty.
\end{equation}
Both the pocket decomposition of Subsection~\ref{subsec:pocket} and the fixed
square cells associated with $\Omega$ in Subsection~\ref{subsec:lattice} have this property.

\begin{proposition}[Radius stability of local masses]
\label{prop:local-mass-radius}
Let $0<p,q\leq\infty$, $\sigma\in\mathbb R$, and let $\mu$ be a
positive locally finite Borel measure. For any fixed $R_1,R_2>0$,
\[
 \|h_{R_1,\sigma}\|_{L_{\rm sh}^{p,q}}
 \asymp
 \|h_{R_2,\sigma}\|_{L_{\rm sh}^{p,q}}
\quad \text{and} \quad
 \lim_{N\to\infty}
 \|\mathbf 1_{\{|z|\geq N\}}h_{R_1,\sigma}\|_{L_{\rm sh}^{p,q}}
 \asymp
 \lim_{N\to\infty}
 \|\mathbf 1_{\{|z|\geq N\}}h_{R_2,\sigma}\|_{L_{\rm sh}^{p,q}},
\]
whenever either limit is finite. 
The implicit constants depend only
on $p,q,\sigma,R_1,R_2$.
\end{proposition}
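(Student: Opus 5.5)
By symmetry it suffices to treat $R_1<R_2$. Monotonicity of $\widehat\mu_R$ in $R$ gives $h_{R_1,\sigma}\le h_{R_2,\sigma}$ pointwise, so the lower bounds for both the full quasi-norm and the tail limit are immediate. The work is the reverse estimate.

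For the reverse estimate, cover $B(0,R_2)$ by finitely many discs $B(a_i,R_1)$, $i=1,\dots,N_0$, with $|a_i|\le R_2$ and $N_0=N_0(R_1,R_2)$. Then
\[
 \widehat\mu_{R_2}(z)\le\sum_{i=1}^{N_0}\widehat\mu_{R_1}(z+a_i),
\]
and, since $|a_i|\le R_2$, polynomial moderation gives $(1+|z|)^\sigma\lesssim(1+|z+a_i|)^\sigma$ with constant depending on $\sigma,R_2$. Hence
\[
 h_{R_2,\sigma}(z)\lesssim\sum_{i=1}^{N_0}h_{R_1,\sigma}(z+a_i).
\]
Using the powered quasi-triangle inequality for $L_{\rm sh}^{p,q}$ (with exponent $\min\{1,p,q\}$) and $N_0<\infty$, it remains to show that translation by a bounded vector $a$ is bounded on $L_{\rm sh}^{p,q}$:
\[
 \|h(\cdot+a)\|_{L_{\rm sh}^{p,q}}\lesssim\|h\|_{L_{\rm sh}^{p,q}},\qquad |a|\le R_2 .
\]

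This translation bound is the only point where the annular, non-translation-invariant structure matters, and I expect it to be the main step. Since $A_k+a\subset\bigcup_{|j-k|\le\lceil R_2\rceil}A_j$, Lebesgue measure is translation invariant, and $A_k+a$ meets at most $2\lceil R_2\rceil+1$ shells, we get
\[
 \|h(\cdot+a)\|_{L^p(A_k)}\lesssim\sum_{|j-k|\le \lceil R_2\rceil}\|h\|_{L^p(A_j)}.
\]
This also holds for $p=\infty$ and for $p<1$, using $p$-subadditivity over the finitely many pieces. Taking the $\ell^q$ quasi-norm in $k$ of a sum of boundedly many shifted sequences gives the claim; this is the same shell-convolution argument as in Proposition~\ref{prop:gaussian-matrices}, with a finitely supported kernel.

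For the tails, apply the same estimate to $\mathbf 1_{\{|z|\ge N\}}h_{R_2,\sigma}$. For $|z|\ge N$ the points $z+a_i$ satisfy $|z+a_i|\ge N-R_2$, so
\[
 \mathbf 1_{\{|z|\ge N\}}h_{R_2,\sigma}(z)\lesssim\sum_i\big(\mathbf 1_{\{|\cdot|\ge N-R_2\}}h_{R_1,\sigma}\big)(z+a_i).
\]
The translation bound then yields
\[
 \|\mathbf 1_{\{|z|\ge N\}}h_{R_2,\sigma}\|_{L_{\rm sh}^{p,q}}\lesssim\|\mathbf 1_{\{|z|\ge N-R_2\}}h_{R_1,\sigma}\|_{L_{\rm sh}^{p,q}}.
\]
Both tail quantities are nonincreasing in $N$, so the limits exist in $[0,\infty]$, and letting $N\to\infty$ gives the comparison of limits. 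The constants depend only on $p,q,\sigma,R_1,R_2$, as claimed.
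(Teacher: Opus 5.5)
Your proposal is correct and follows essentially the same route as the paper. Both cover $B(0,R_2)$ by finitely many translates of $B(0,R_1)$ and dominate $h_{R_2,\sigma}$ by finitely many translates of $h_{R_1,\sigma}$ using polynomial moderation. Both then use boundedness of bounded translations on $L_{\rm sh}^{p,q}$, via finite shell overlap and the powered quasi-triangle inequality, and apply the truncated version with threshold $N-R_2$ to handle the tails. The only cosmetic difference is that you get the easy direction from monotonicity of $\widehat\mu_R$ in $R$, whereas the paper simply interchanges $R_1$ and $R_2$ in the covering argument.
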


\begin{proof}
We first record the effect of a fixed translation on the shell
quasi-norm. If $h$ is measurable and $w\in\mathbb C$ is fixed, then
a translate of an annulus $A_k$ meets only finitely many annuli $A_j$
with $|j-k|\leq J$, where $J$ depends only on $|w|$. Hence the
powered quasi-triangle inequality and finite overlap give
\[
 \|h(\cdot+w)\|_{L_{\rm sh}^{p,q}}
 \lesssim_w
 \|h\|_{L_{\rm sh}^{p,q}}.
\]
The same argument applied to the truncated function gives
\[
 \|\mathbf 1_{\{|z|\geq N\}}h(z+w)\|_{L_{\rm sh}^{p,q}}
 \lesssim_w
 \|\mathbf 1_{\{|z|\geq N-|w|\}}h(z)\|_{L_{\rm sh}^{p,q}}.
\]
Cover $B(0,R_2)$ by finitely many translates
\[
 B(0,R_2)\subset\bigcup_{\ell=1}^L B(w_\ell,R_1).
\]
Then
\[
 \widehat\mu_{R_2}(z)
 \leq
 \sum_{\ell=1}^L
 \widehat\mu_{R_1}(z+w_\ell).
\]
Since, for each fixed $w_\ell$,
\(
 (1+|z|)^\sigma
 \asymp
 (1+|z+w_\ell|)^\sigma,
\)
the preceding translation estimate yields
\[
 \|h_{R_2,\sigma}\|_{L_{\rm sh}^{p,q}}
 \lesssim
 \|h_{R_1,\sigma}\|_{L_{\rm sh}^{p,q}}.
\]
Interchanging $R_1$ and $R_2$ proves the norm equivalence.

\smallskip
\noindent
Using the truncated translation estimate in the same argument gives
the corresponding comparison of the asymptotic tails. Interchanging
$R_1$ and $R_2$ completes the proof.
\end{proof}

Having removed the dependence on the choice of radius, we now pass
between continuous local masses and their values on a bounded-geometry
cell system.

\begin{proposition}[Cell discretization and Gaussian majorization]
\label{prop:cell-discretization}
Let $\alpha>0$, $0<p,q\leq\infty$, $\sigma\in\mathbb R$, and let
$\mu$ be a positive locally finite Borel measure. Assume that
\[
 \mathbb C=\bigsqcup_{\lambda\in\Lambda}Q_\lambda,
 \qquad
 B(\lambda,c_0)\subset Q_\lambda\subset B(\lambda,C_0),
\]
with $0<c_0<C_0<\infty$, and let $R>C_0$. Put
\[
 m_{R,\sigma}(\lambda)
 :=
 (1+|\lambda|)^\sigma\widehat\mu_R(\lambda).
\]
The following assertions hold:
\begin{enumerate}[label=\textup{(\alph*)}]
\item
\[
 \|m_{R,\sigma}\|_{\ell^{p,q}(\Lambda)}
 \asymp
 \|h_{R,\sigma}\|_{L_{\rm sh}^{p,q}}.
\]

\item
\[
 \lim_{N\to\infty}
 \|\mathbf 1_{\{|\lambda|\geq N\}}m_{R,\sigma}\|_{\ell^{p,q}(\Lambda)}
 \asymp
 \lim_{N\to\infty}
 \|\mathbf 1_{\{|z|\geq N\}}h_{R,\sigma}\|_{L_{\rm sh}^{p,q}},
\]
whenever either limit is finite.

\item
\[
 g_{\alpha,\sigma}(z)
 \lesssim
 \sum_{\lambda\in\Lambda}
 e^{-\frac{\alpha}{4}|z-\lambda|^2}
 m_{R,\sigma}(\lambda),
 \qquad z\in\mathbb C.
\]
\end{enumerate}
The implicit constants depend only on
$\alpha,p,q,\sigma,R$ and the fixed cell geometry.
\end{proposition}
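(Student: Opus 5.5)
For (a) I will compare $m_{R,\sigma}$ on each cell with $h_{R',\sigma}$ on the cell for slightly different radii, and then use Proposition~\ref{prop:local-mass-radius} to return to the radius $R$. Fix $\lambda\in\Lambda$ and $z\in B(\lambda,c_0)\subset Q_\lambda$. Then
\[
 \widehat\mu_{R-c_0}(z)\le \widehat\mu_R(\lambda)\le \widehat\mu_{R+c_0}(z).
\]
Here $R-c_0>0$, since $R>C_0>c_0$. Also $(1+|z|)^\sigma\asymp(1+|\lambda|)^\sigma$ for such $z$.

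\emph{Lower bound for $h$.} Averaging the left inequality over the disk $B(\lambda,c_0)$ in the $L^p$ sense (or taking the essential supremum if $p=\infty$) gives
\[
 m_{R,\sigma}(\lambda)\lesssim \|h_{R+c_0,\sigma}\|_{L^p(B(\lambda,c_0))}.
\]
For a fixed annulus $A_k$, the points $\lambda\in\Lambda_k$ have disks $B(\lambda,c_0)$ that are pairwise disjoint, since the $Q_\lambda$ are. These disks lie in $A_{k-1}\cup A_k\cup A_{k+1}$ once $c_0<1$, and in general in boundedly many neighbouring annuli. Summing $p$-th powers over $\lambda\in\Lambda_k$ therefore gives
\[
 \|m|_{\Lambda_k}\|_{\ell^p}\lesssim \sum_{|j-k|\le J}\|h_{R+c_0,\sigma}\|_{L^p(A_j)}.
\]
Taking the $\ell^q$ quasi-norm in $k$ (finite overlap, powered triangle inequality) yields $\|m_{R,\sigma}\|_{\ell^{p,q}}\lesssim\|h_{R+c_0,\sigma}\|_{L^{p,q}_{\rm sh}}$. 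Proposition~\ref{prop:local-mass-radius} then replaces $R+c_0$ by $R$.

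\emph{Upper bound for $h$.} For $z\in Q_\lambda\subset B(\lambda,C_0)$ we have $B(z,R-C_0)\subset B(\lambda,R)$, so
\[
 h_{R-C_0,\sigma}(z)\lesssim m_{R,\sigma}(\lambda).
\]
Since the $Q_\lambda$ partition $\mathbb C$ and have area at most $\pi C_0^2$, this gives
\[
 \|h_{R-C_0,\sigma}\|_{L^p(A_k)}^p\lesssim\sum_{\lambda:\,Q_\lambda\cap A_k\neq\emptyset}m_{R,\sigma}(\lambda)^p,
\]
and the $\lambda$ involved lie in boundedly many shells $\Lambda_j$. Radius stability again converts $R-C_0$ back to $R$.

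For (b) I will run the same argument with the truncations $\mathbf 1_{\{|\lambda|\ge N\}}$ and $\mathbf 1_{\{|z|\ge N\}}$. The truncation levels shift by bounded amounts ($C_0$ and the shell spread $J$), which is harmless in the limit $N\to\infty$. The radius change is then handled by the tail part of Proposition~\ref{prop:local-mass-radius}. One bookkeeping point: that proposition is stated for the limits, so the shifts in $N$ have to be absorbed before passing to the limit; this is immediate by monotonicity in $N$.

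For (c) I will split the Gaussian integral over the exact partition:
\[
 \widetilde\mu_\alpha(z)=\sum_\lambda\int_{Q_\lambda}e^{-\alpha|z-w|^2}\,d\mu(w).
\]
For $w\in Q_\lambda$ we have $|w-\lambda|\le C_0$, and $|z-w|^2\ge \tfrac12|z-\lambda|^2-C_0^2$, so
\[
 e^{-\alpha|z-w|^2}\lesssim e^{-\frac\alpha2|z-\lambda|^2}.
\]
Also $\mu(Q_\lambda)\le\widehat\mu_R(\lambda)$ because $R>C_0$. Finally, $(1+|z|)^\sigma\le(1+|\lambda|)^\sigma(1+|z-\lambda|)^{|\sigma|}$, and this polynomial factor is absorbed by reducing the Gaussian exponent from $\alpha/2$ to $\alpha/4$, which gives (c).

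The main obstacle is the careful finite-overlap bookkeeping between cells and annuli in (a) and (b), especially at $p=\infty$ and in the quasi-Banach range. The radius changes themselves are straightforward, since Proposition~\ref{prop:local-mass-radius} removes them.
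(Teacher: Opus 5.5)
Your proposal is correct and follows essentially the paper's route: pointwise cell-to-point comparisons of local masses at shifted radii, finite-shell overlap with the powered quasi-triangle inequality, radius stability (its tail version for (b)), and the exact-partition Gaussian estimate with polynomial moderation for (c). The only cosmetic difference is in the direction $m\lesssim h$, where you average over the inscribed disks $B(\lambda,c_0)$ rather than comparing the sequence norm with the step function $\sum_\lambda m_{R,\sigma}(\lambda)\mathbf 1_{Q_\lambda}$ as the paper does. One small slip there: the inequality you average is the right-hand one, $\widehat\mu_R(\lambda)\le\widehat\mu_{R+c_0}(z)$, not the left-hand one.
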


\begin{proof}
We prove the three assertions in order.

\smallskip
\noindent
(a) If $z\in Q_\lambda$, then
\[
 B(z,R-C_0)
 \subset
 B(\lambda,R)
 \subset
 B(z,R+C_0),
\]
and
\[
 (1+|z|)^\sigma
 \asymp
 (1+|\lambda|)^\sigma.
\]
Consequently,
\begin{equation}\label{eq:cell-mass-comparison}
 h_{R-C_0,\sigma}(z)
 \lesssim
 m_{R,\sigma}(\lambda)
 \lesssim
 h_{R+C_0,\sigma}(z),
 \qquad z\in Q_\lambda.
\end{equation}
Define the step function
\[
 m_{R,\sigma}^{\sharp}(z)
 :=
 \sum_{\lambda\in\Lambda}
 m_{R,\sigma}(\lambda)\mathbf 1_{Q_\lambda}(z).
\]
The inclusions in \eqref{eq:cell-system} give uniform upper and lower
bounds for the areas of the cells. Moreover, if
$\lambda\in\Lambda_k$, then $Q_\lambda$ meets only finitely many
annuli $A_j$ with $|j-k|\leq J$, where $J$ depends only on $C_0$.
Thus finite-shell overlap, together with the powered
quasi-triangle inequality when necessary, gives
\[
 \|m_{R,\sigma}^{\sharp}\|_{L_{\rm sh}^{p,q}}
 \asymp
 \|m_{R,\sigma}\|_{\ell^{p,q}(\Lambda)}.
\]
Combining this with \eqref{eq:cell-mass-comparison} and
Proposition~\ref{prop:local-mass-radius} yields
\[
 \|m_{R,\sigma}\|_{\ell^{p,q}(\Lambda)}
 \asymp
 \|h_{R,\sigma}\|_{L_{\rm sh}^{p,q}}.
\]

\smallskip
\noindent
(b) If $z\in Q_\lambda$, then
\(
 \big||z|-|\lambda|\big|\leq C_0.
\)
Hence restricting $m_{R,\sigma}^{\sharp}$ to $\{|z|\geq N\}$ and
restricting its coefficient sequence to $\{|\lambda|\geq N\}$ change
the truncation threshold by at most a fixed amount. Applying the
finite-shell comparison from part~(a) to the truncated step functions
and sequences, and using Proposition~\ref{prop:local-mass-radius} for
the local-mass terms, gives
\[
 \lim_{N\to\infty}
 \|\mathbf 1_{\{|\lambda|\geq N\}}m_{R,\sigma}
 \|_{\ell^{p,q}(\Lambda)}
 \asymp
 \lim_{N\to\infty}
 \|\mathbf 1_{\{|z|\geq N\}}h_{R,\sigma}
 \|_{L_{\rm sh}^{p,q}},
\]
whenever either limit is finite.

\smallskip
\noindent
(c) For $w\in Q_\lambda$, the bounded diameter of $Q_\lambda$ gives
\(
 |w-\lambda|\leq C_0.
\)
Together with polynomial moderation, this yields
\[
 (1+|z|)^\sigma e^{-\alpha|z-w|^2}
 \lesssim
 (1+|\lambda|)^\sigma
 e^{-\frac{\alpha}{4}|z-\lambda|^2}.
\]
Using the exact cell decomposition, we therefore obtain
\[
 \begin{aligned}
 g_{\alpha,\sigma}(z)
 &=
 (1+|z|)^\sigma
 \sum_{\lambda\in\Lambda}
 \int_{Q_\lambda}
 e^{-\alpha|z-w|^2}\,d\mu(w)\\
 &\lesssim
 \sum_{\lambda\in\Lambda}
 e^{-\frac{\alpha}{4}|z-\lambda|^2}
 (1+|\lambda|)^\sigma\mu(Q_\lambda) \leq
 \sum_{\lambda\in\Lambda}
 e^{-\frac{\alpha}{4}|z-\lambda|^2}
 m_{R,\sigma}(\lambda),
 \end{aligned}
\]
because $Q_\lambda\subset B(\lambda,R)$. This proves~(c).
\end{proof}

The preceding discretization and Gaussian majorization now give the
equivalence of the two continuous profiles.

\begin{theorem}[Local masses and Gaussian Berezin profiles]
\label{thm:local-mass-berezin}
Let $\mu$ be a positive locally finite Borel measure, let
$\alpha,R>0$, $0<p,q\leq\infty$, and $\sigma\in\mathbb R$. Then
\[
 \|h_{R,\sigma}\|_{L_{\rm sh}^{p,q}}
 \asymp
 \|g_{\alpha,\sigma}\|_{L_{\rm sh}^{p,q}}
\qquad \text{and} \qquad
 \lim_{N\to\infty}
 \|\mathbf 1_{\{|z|\geq N\}}h_{R,\sigma}\|_{L_{\rm sh}^{p,q}}
 \asymp
 \lim_{N\to\infty}
 \|\mathbf 1_{\{|z|\geq N\}}g_{\alpha,\sigma}\|_{L_{\rm sh}^{p,q}},
\]
whenever either limit is finite. In particular, one tail vanishes
if and only if the other does. 
The implicit constants in both comparisons depend only on $\alpha,R,p,q,\sigma$ and the fixed
geometric discretization.
\end{theorem}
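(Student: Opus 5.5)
We prove the two comparisons separately. The lower bound $h\lesssim g$ is pointwise, and the upper bound $g\lesssim h$ goes through the cell discretization and the Gaussian matrix bounds.

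\emph{Lower bound.} For every $z$,
\[
 \widehat\mu_R(z)=\mu(B(z,R))\leq e^{\alpha R^2}\int_{B(z,R)}e^{-\alpha|z-w|^2}\,d\mu(w)\leq e^{\alpha R^2}\,\widetilde\mu_\alpha(z).
\]
Hence $h_{R,\sigma}\leq e^{\alpha R^2}g_{\alpha,\sigma}$ pointwise. This gives both $\|h_{R,\sigma}\|_{L^{p,q}_{\rm sh}}\lesssim\|g_{\alpha,\sigma}\|_{L^{p,q}_{\rm sh}}$ and the corresponding inequality for the truncations $\mathbf 1_{\{|z|\geq N\}}$, for every $N$.

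\emph{Upper bound.} By Proposition~\ref{prop:local-mass-radius} we may replace $R$ by any radius $R'>C_0$ for a fixed exact cell system $(\Lambda,(Q_\lambda))$; the square cells of $\Omega$ will do. Proposition~\ref{prop:cell-discretization}(c) then gives
\[
 g_{\alpha,\sigma}(z)\lesssim\sum_{\lambda\in\Lambda}e^{-\frac{\alpha}{4}|z-\lambda|^2}m_{R',\sigma}(\lambda).
\]
We discretize the left side by the same cells. For $z\in Q_\omega$, bounded displacement gives $e^{-\frac\alpha4|z-\lambda|^2}\lesssim e^{-\frac\alpha8|\omega-\lambda|^2}$. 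Therefore $g_{\alpha,\sigma}\lesssim (Gm_{R',\sigma})^\sharp$, the step function with value $(Gm_{R',\sigma})(\omega)$ on $Q_\omega$, where $G_{\omega,\lambda}=e^{-\frac\alpha8|\omega-\lambda|^2}$. The finite-shell comparison from the proof of Proposition~\ref{prop:cell-discretization}(a) gives $\|(Gm)^\sharp\|_{L^{p,q}_{\rm sh}}\asymp\|Gm\|_{\ell^{p,q}(\Lambda)}$. Proposition~\ref{prop:gaussian-matrices} bounds this by $\|m_{R',\sigma}\|_{\ell^{p,q}(\Lambda)}$, and Proposition~\ref{prop:cell-discretization}(a) bounds that in turn by $\|h_{R',\sigma}\|_{L^{p,q}_{\rm sh}}\asymp\|h_{R,\sigma}\|_{L^{p,q}_{\rm sh}}$. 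This is the norm equivalence. Note that no finiteness assumption is needed, since all quantities are nonnegative.

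\emph{Tails.} The tail comparison is the main point needing care, because $G$ mixes shells. Write $m=m\mathbf 1_{\{|\lambda|<M\}}+m\mathbf 1_{\{|\lambda|\geq M\}}$ with $M=N/2$.
\begin{itemize}
\item The head contributes to $\mathbf 1_{\{|\omega|\geq N\}}Gm$ only through the shell estimate \eqref{eq:gaussian-shell}. Each shell $k<M$ with $m|_{\Lambda_k}$ finite is transported to shells $\ell\geq N$ with factor $e^{-\frac{c}{2}(\ell-k)^2}$. Summing with the $s$-triangle inequality, the head's contribution is bounded by $e^{-cN^2/16}$ times a finite expression and tends to $0$ as $N\to\infty$. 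Finiteness holds because we assume one of the limits is finite, and then $m$ lies in $\ell^{p,q}$; if the $g$-limit is finite, the already proved lower bound gives this.
\item The tail $\mathbf 1_{\{|\lambda|\geq M\}}m$ is bounded by Proposition~\ref{prop:gaussian-matrices}. Its norm tends to the $m$-tail limit as $N\to\infty$, which by Proposition~\ref{prop:cell-discretization}(b) is comparable to the $h$-tail limit.
\end{itemize}
Combining the head and tail via the powered quasi-triangle inequality, and accounting for the bounded shift between $|z|\geq N$ and $|\omega|\geq N-C_0$, gives $\lim\|\mathbf 1_{\{|z|\geq N\}}g\|\lesssim\lim\|\mathbf 1_{\{|z|\geq N\}}h\|$. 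Both limits exist by monotonicity in $N$. Together with the lower bound, this shows that one tail vanishes exactly when the other does.
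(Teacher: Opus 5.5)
Your proposal is correct and follows essentially the paper's route: the pointwise bound $\widehat\mu_R\le e^{\alpha R^2}\widetilde\mu_\alpha$ gives one direction, and Proposition~\ref{prop:cell-discretization}(c) with Gaussian-matrix boundedness and the cell-to-shell comparison gives the other. For the tails you also split the coefficient sequence into near and far parts; you split at $|\lambda|=N/2$ and let the shell decay \eqref{eq:gaussian-shell} kill the near part in a single limit, whereas the paper splits at $N-L$, extracts $e^{-\frac{\tau}{2}L^2}$ pointwise, and lets $N\to\infty$ and then $L\to\infty$. The one step you assert without comment, that a finite tail limit forces $m\in\ell^{p,q}$, follows as in the paper from the local finiteness of $\mu$ on the finitely many initial shells.
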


\begin{proof}
By Proposition~\ref{prop:local-mass-radius}, we may work at the fixed
radius $R_*$ and with the square cell system
$(Q_\omega)_{\omega\in\Omega}$ from Subsection~\ref{subsec:lattice}. Put
\[
 m_\omega
 :=
 (1+|\omega|)^\sigma\widehat\mu_{R_*}(\omega).
\]
For $\tau>0$ and a nonnegative sequence
$d=(d_\omega)_{\omega\in\Omega}$, set
\[
 \mathcal G_\tau d(z)
 :=
 \sum_{\omega\in\Omega}e^{-\tau|z-\omega|^2}d_\omega.
\]
If $z\in Q_\gamma$, then the bounded diameter of the square cells
gives
\[
 \mathcal G_\tau d(z)
 \lesssim
 \sum_{\omega\in\Omega}
 e^{-\frac{\tau}{2}|\gamma-\omega|^2}d_\omega.
\]
Proposition~\ref{prop:gaussian-matrices}, together with the
cell-to-shell comparison, therefore yields
\begin{equation}\label{eq:gaussian-extension}
 \|\mathcal G_\tau d\|_{L_{\rm sh}^{p,q}}
 \lesssim_\tau
 \|d\|_{\ell^{p,q}}.
\end{equation}
We also record the corresponding tail estimate. Let $N,L\geq1$ and
split
\[
 d
 =
 \mathbf 1_{\{|\omega|\geq N-L\}}d
 +
 \mathbf 1_{\{|\omega|<N-L\}}d.
\]
The first part is controlled by \eqref{eq:gaussian-extension}. For
the second part, if $|z|\geq N$ and $|\omega|<N-L$, then
$|z-\omega|\geq L$. Hence
\[
 e^{-\tau|z-\omega|^2}
 \leq
 e^{-\frac{\tau}{2}L^2}
 e^{-\frac{\tau}{2}|z-\omega|^2}.
\]
Applying \eqref{eq:gaussian-extension} with $\frac{\tau}{2}$ gives
\begin{equation}\label{eq:gaussian-extension-tail}
 \|\mathbf 1_{\{|z|\geq N\}}\mathcal G_\tau d\|_{L_{\rm sh}^{p,q}}
 \lesssim_\tau
 \|\mathbf 1_{\{|\omega|\geq N-L\}}d\|_{\ell^{p,q}}
 +
 e^{-\frac{\tau}{2}L^2}\|d\|_{\ell^{p,q}}.
\end{equation}

\smallskip
\noindent
Proposition~\ref{prop:cell-discretization}(c), with
$\tau=\frac{\alpha}{4}$, gives
\[
 g_{\alpha,\sigma}
 \lesssim
 \mathcal G_{\frac{\alpha}{4}}m.
\]
Hence, by \eqref{eq:gaussian-extension} and
Proposition~\ref{prop:cell-discretization}(a),
\[
 \|g_{\alpha,\sigma}\|_{L_{\rm sh}^{p,q}}
 \lesssim
 \|m\|_{\ell^{p,q}}
 \asymp
 \|h_{R_*,\sigma}\|_{L_{\rm sh}^{p,q}}
 \asymp
 \|h_{R,\sigma}\|_{L_{\rm sh}^{p,q}}.
\]
Conversely, if $w\in B(z,R)$, then
\(
 e^{-\alpha|z-w|^2}\geq e^{-\alpha R^2},
\)
and therefore
\begin{equation}\label{eq-local-mass-berezin-pointwise}
\widehat\mu_R(z)
\le e^{\alpha R^2}\widetilde\mu_\alpha(z),
\qquad z\in\mathbb C,\quad R>0.
\end{equation}
Thus
\(
 h_{R,\sigma}(z)
 \leq
 e^{\alpha R^2}g_{\alpha,\sigma}(z),
\)
which proves the first assertion.

\smallskip
\noindent
Let
\[
 E_R
 :=
 \lim_{N\to\infty}
 \|\mathbf 1_{\{|z|\geq N\}}h_{R,\sigma}\|_{L_{\rm sh}^{p,q}},
\qquad \text{and} \qquad
 E_B
 :=
 \lim_{N\to\infty}
 \|\mathbf 1_{\{|z|\geq N\}}g_{\alpha,\sigma}\|_{L_{\rm sh}^{p,q}}.
\]
The preceding pointwise estimate immediately gives
\[
 E_R\lesssim E_B.
\]
Suppose that either limit is finite. Then $E_R<\infty$, and
Proposition~\ref{prop:local-mass-radius} shows that the
corresponding $R_*$-tail is finite. Since $\mu$ is locally
finite, $h_{R_*,\sigma}$ has finite shell quasi-norm on every
bounded region. Hence
\(
  h_{R_*,\sigma}\in L_{\rm sh}^{p,q}.
\)
By Proposition~\ref{prop:cell-discretization}(a),
\[
 m\in\ell^{p,q}(\Omega).
\]
Using Proposition~\ref{prop:cell-discretization}(c) and
\eqref{eq:gaussian-extension-tail} with $\tau= \frac{\alpha}{4}$, we obtain
\[
 \|\mathbf 1_{\{|z|\geq N\}}g_{\alpha,\sigma}\|_{L_{\rm sh}^{p,q}}
 \lesssim
 \|\mathbf 1_{\{|\omega|\geq N-L\}}m
 \|_{\ell^{p,q}}
 +
 e^{-\frac{\alpha}{8}L^2}
 \|m\|_{\ell^{p,q}}.
\]
Letting $N\to\infty$ and using
Proposition~\ref{prop:cell-discretization}(b) together with
Proposition~\ref{prop:local-mass-radius}, we get
\[
 E_B
 \lesssim
 E_R
 +
 e^{-\frac{\alpha}{8}L^2}
 \|m\|_{\ell^{p,q}}.
\]
Finally, letting $L\to\infty$ yields
\(
 E_B\lesssim E_R.
\)
Together with the reverse estimate, this proves the second assertion.
\end{proof}

\section{Proofs of the main operator results and particular cases}
\label{sec:operators}

We now combine the analytic discretizations of Section~\ref{sec:tools}, the
sequence-space mechanisms of Section~\ref{sec:discrete}, and the profile comparisons
of Section~\ref{sec:comparison}. We first prove the boundedness criterion between the
large mixed-norm spaces in the full exponent range. We then turn to
compactness and the essential norm, where the cases $q_1\leq q_2$
and $q_2<q_1$ separate naturally. Mappings involving the little outer
endpoint are treated afterward, and the final subsection records several particular cases.

\subsection{Boundedness between the large mixed-norm spaces}
\label{subsec:large-boundedness}

By Theorem~\ref{thm:local-mass-berezin}, the local-mass and
Gaussian Berezin conditions in Theorems~\ref{thm:q1leq-q2}
and~\ref{thm:q2lt-q1} are equivalent. It therefore suffices to
characterize boundedness in terms of the local-mass condition. We do
this simultaneously in the two exponent ranges.
Thus throughout this subsection
\[
 0<p_1,q_1,p_2,q_2\leq\infty,
 \qquad
 \frac1{p_{12}}
 =
 \left(\frac1{p_2}-\frac1{p_1}\right)_+,
 \qquad
 \frac1{q_{12}}
 =
 \left(\frac1{q_2}-\frac1{q_1}\right)_+.
\]

\medskip
\noindent
\textit{Step 1: Discretization and Gaussian reduction.}
Suppose first that, for some fixed $R>0$,
\[
 (1+|\cdot|)^{\nu_2-\nu_1}\widehat\mu_R
 \in L_{\rm sh}^{p_{12},q_{12}}.
\]
By Proposition~\ref{prop:local-mass-radius}, we may work at the radius
$R_*$ fixed in~\eqref{eq-R*}. For the pocket centers $\zeta_Q$ of
Subsection~\ref{subsec:pocket}, put
\[
 m_Q
 :=
 (1+|\zeta_Q|)^{\nu_2-\nu_1}
 \widehat\mu_{R_*}(\zeta_Q).
\]
Since $R_*>C_{\rm out}\delta$, Proposition~\ref{prop:cell-discretization}(a),
applied to the pocket cell system, gives
\begin{equation}\label{eq:5-pocket-mass}
 \|m\|_{\ell^{p_{12},q_{12}}(\mathcal P_\delta)}
 \lesssim
 \big\|
 (1+|\cdot|)^{\nu_2-\nu_1}
 \widehat\mu_{R_*}
 \big\|_{L_{\rm sh}^{p_{12},q_{12}}}.
\end{equation}
Let $f\in\mathcal F_\alpha^{p_1,q_1}$ and set
\[
 d:=A_\delta^{p_1,q_1}f.
\]
By the pocket recovery of Subsection~\ref{subsec:pocket},
\[
 \|d\|_{\ell^{p_1,q_1}(\mathcal P_\delta)}
 \lesssim
 \|f\|_{\mathcal F_\alpha^{p_1,q_1}},
 \qquad
 f
 =
 \sum_{Q\in\mathcal P_\delta}
 d_Q\,\widetilde\kappa_{\alpha,\zeta_Q}.
\]
Since
\(
 \|\kappa_{\alpha,\zeta_Q}\|_{\mathcal F_\alpha^{p_1,q_1}}
 \asymp
 (1+|\zeta_Q|)^{\nu_1}
\)
and
\(
 |\kappa_{\alpha,\zeta_Q}(w)|
 e^{-\frac{\alpha}{2}|w|^2}
 =
 e^{-\frac{\alpha}{2}|w-\zeta_Q|^2},
\)
we obtain
\begin{equation}\label{eq:5-source-envelope}
 |f(w)|e^{-\frac{\alpha}{2}|w|^2}
 \lesssim
 \sum_{Q\in\mathcal P_\delta}
 |d_Q|
 (1+|\zeta_Q|)^{-\nu_1}
 e^{-\frac{\alpha}{2}|w-\zeta_Q|^2}.
\end{equation}
The series on the right converges locally uniformly: the coefficients
are bounded by the sequence quasi-norm of $d$, the polynomial factor
has at most fixed polynomial growth, and the pocket centers have
bounded geometry.

\smallskip
\noindent
For $z\in\mathbb C$, consider the possibly infinite positive integral
\[
 I_f(z)
 :=
 \frac{\alpha}{\pi}
 \int_{\mathbb C}
 |f(w)|
 e^{\alpha\operatorname{Re}(z\overline w)-\alpha|w|^2}
 \,d\mu(w).
\]
Then using \eqref{eq:5-source-envelope}, Tonelli's theorem, and
the exact pocket decomposition, we obtain
\[
 I_f(z)e^{-\frac{\alpha}{2}|z|^2}(1+|z|)^{\nu_2}
 \lesssim
 \sum_{P,Q\in\mathcal P_\delta}
 e^{-\frac{\alpha}{8}|z-\zeta_P|^2}
 m_P
 e^{-\frac{\alpha}{8}|\zeta_P-\zeta_Q|^2}
 |d_Q|.
\]
Indeed, on the pocket $P$ the two polynomial factors may be transferred
to $\zeta_P$, since
\(
 (-\nu_1)+\nu_2=\nu_2-\nu_1,
\)
and the resulting fixed polynomial losses are absorbed by slightly
weakening the Gaussian factors.

\smallskip
\noindent
Let
\[
 G_{P,Q}:=e^{-\frac{\alpha}{8}|\zeta_P-\zeta_Q|^2},
 \qquad
 y:=M_mG|d|.
\]
Then
\begin{equation}\label{eq:5-target-envelope}
 I_f(z)e^{-\frac{\alpha}{2}|z|^2}(1+|z|)^{\nu_2}
 \lesssim
 \sum_{P\in\mathcal P_\delta}
 e^{-\frac{\alpha}{8}|z-\zeta_P|^2}y_P.
\end{equation}
Propositions~\ref{prop:gaussian-matrices} and
\ref{prop:annular-multipliers} give
\begin{equation}\label{eq:5-y-bound}
 \|y\|_{\ell^{p_2,q_2}}
 \lesssim
 \|m\|_{\ell^{p_{12},q_{12}}}
 \|d\|_{\ell^{p_1,q_1}}.
\end{equation}

\medskip
\noindent
\textit{Step 2: Well-definedness and the target estimate.}
Restrict the integral defining $I_f(z)$ to $|w|>N$. If $w\in P$,
then
\[
 |\zeta_P|
 \geq
 |w|-|w-\zeta_P|
 >
 N-C_{\rm out}\delta.
\]
Hence~\eqref{eq:5-target-envelope} bounds this tail by
\[
 \sum_{|\zeta_P|>N-C_{\rm out}\delta}
 e^{-\frac{\alpha}{8}|z-\zeta_P|^2}y_P.
\]
Every coordinate $y_P$ is bounded by a fixed multiple of
$\|y\|_{\ell^{p_2,q_2}}$. Since the pocket centers have bounded
geometry, Gaussian decay therefore gives, for every compact
$K\subset\mathbb C$,
\[
 \sup_{z\in K}
 \sum_{|\zeta_P|>N-C_{\rm out}\delta}
 e^{-\frac{\alpha}{8}|z-\zeta_P|^2}y_P
 \longrightarrow0
 \qquad (N\to\infty).
\]
Thus the absolute integral converges locally uniformly in $z$.

\smallskip
\noindent
For each $N$, the integral restricted to $|w|\leq N$ is entire:
on every compact set of $z$ the integrand and all its $z$-derivatives
are uniformly dominated, since $f$ is bounded on $B(0,N)$ and
$\mu(B(0,N))<\infty$.
These truncations converge locally
uniformly, and hence their limit is an entire function. Consequently
the Toeplitz integral defines $T_\mu^\alpha f$ and
\begin{equation}\label{eq:5-T-bounded-by-I}
 |T_\mu^\alpha f(z)|\leq I_f(z),
 \qquad z\in\mathbb C.
\end{equation}
Next, put
\[
 a_k
 :=
 \|y|_{\mathcal P_{\delta,k}}\|_{\ell^{p_2}},
 \qquad k\geq0.
\]
The one-shell Gaussian estimate from the proofs of
\cite[Lemmas~5.13 and~5.22]{FT}, applied to the Gaussian family
$e^{-\frac{\alpha}{8}|\cdot-\zeta_P|^2}$, gives
\[
 \sup_{\ell\leq r<\ell+1}
 M_{p_2}\!\left(
   \sum_{P\in\mathcal P_{\delta,k}}
   e^{-\frac{\alpha}{8}|\cdot-\zeta_P|^2}y_P,r
 \right)
 \lesssim
 (1+k)^{-\frac1{p_2}}
 e^{-\frac{\alpha}{32}|\ell-k|^2}a_k,
\]
with the usual interpretation when $p_2=\infty$. For $0<p_2<1$, the same estimate follows from the corresponding
$p_2$-subadditive one-shell argument in the proof of
\cite[Lemma~5.22]{FT}.

\smallskip
\noindent
Set
\[
 s:=\min\{1,p_2\}.
\]
Decomposing the Gaussian sum according to the annular blocks, using
Minkowski's inequality when $p_2\geq1$ and $p_2$-subadditivity when
$p_2<1$, and absorbing
\(
 \left(\frac{1+\ell}{1+k}\right)^{\frac{1}{p_2}}
\)
into a slightly weaker Gaussian, we obtain
\begin{equation}\label{eq:5-shell-envelope}
 \left[
 (1+\ell)^{\frac{1}{p_2}}
 \sup_{\ell\leq r<\ell+1}
 M_{p_2}\!\left(
   \sum_{P\in\mathcal P_\delta}
   e^{-\frac{\alpha}{8}|\cdot-\zeta_P|^2}y_P,r
 \right)
 \right]^s
 \lesssim
 \sum_{k\geq0}
 e^{-\frac{\alpha s}{64}|\ell-k|^2}a_k^s .
\end{equation}
By~\eqref{eq:5-T-bounded-by-I} and
\eqref{eq:5-target-envelope},
\[
 B_{p_2}(T_\mu^\alpha f;\ell)
 \lesssim
 (1+\ell)^{-\nu_2}
 \sup_{\ell\leq r<\ell+1}
 M_{p_2}\!\left(
   \sum_{P\in\mathcal P_\delta}
   e^{-\frac{\alpha}{8}|\cdot-\zeta_P|^2}y_P,r
 \right).
\]
Since
\(
 \frac1{q_2}-\nu_2=\frac1{p_2},
\)
the annular factor $(1+\ell)^{\frac{1}{q_2}}$ appearing in
\eqref{eq:annular} converts the preceding right-hand side exactly
into the quantity on the left of~\eqref{eq:5-shell-envelope}.

\smallskip
\noindent
To complete the outer estimate, put
\[
 b_\ell
 :=
 (1+\ell)^{\frac{1}{p_2}}
 \sup_{\ell\leq r<\ell+1}
 M_{p_2}\!\left(
   \sum_{P\in\mathcal P_\delta}
   e^{-\frac{\alpha}{8}|\cdot-\zeta_P|^2}y_P,r
 \right).
\]
Then~\eqref{eq:5-shell-envelope} gives
\[
 b_\ell^s
 \lesssim
 \sum_{k\geq0}
 h_{\ell-k}a_k^s,
 \qquad
 h_j:=e^{-\frac{\alpha s}{64}|j|^2}.
\]
If $q_2<\infty$, then
\[
 \|b\|_{\ell^{q_2}}^s
 =
 \|(b_\ell^s)_{\ell \geq 0}\|_{\ell^{\frac{q_2}{s}}}.
\]
When $\frac{q_2}{s}\geq1$, Young's inequality gives
\[
 \|b\|_{\ell^{q_2}}
 \lesssim
 \|a\|_{\ell^{q_2}},
\]
while for $\frac{q_2}{s}<1$ the same conclusion follows from the
$\frac{q_2}{s}$-subadditivity and the summability of every positive power
of $h$. If $q_2=\infty$, one simply takes suprema. Hence
\[
 \|b\|_{\ell^{q_2}}
 \lesssim
 \|a\|_{\ell^{q_2}}
 =
 \|y\|_{\ell^{p_2,q_2}}.
\]
The annular discretization therefore yields
\begin{equation}\label{eq:5-envelope-fock}
 \|T_\mu^\alpha f\|_{\mathcal F_\alpha^{p_2,q_2}}
 \lesssim
 \|y\|_{\ell^{p_2,q_2}}.
\end{equation}
Combining~\eqref{eq:5-pocket-mass},
\eqref{eq:5-y-bound}, and~\eqref{eq:5-envelope-fock}, we conclude that
\[ \|T_\mu^\alpha\|_{\mathcal F_\alpha^{p_1,q_1}
 \to\mathcal F_\alpha^{p_2,q_2}}
 \lesssim
 \big\|
 (1+|\cdot|)^{\nu_2-\nu_1}
 \widehat\mu_{R_*}
 \big\|_{L_{\rm sh}^{p_{12},q_{12}}}.
\]
This proves sufficiency and the upper norm estimate.

\medskip
\noindent
\textit{Step 3: Necessity via the square testing matrix.}
Conversely, suppose that
\(
 T_\mu^\alpha:
 \mathcal F_\alpha^{p_1,q_1}
 \longrightarrow
 \mathcal F_\alpha^{p_2,q_2}
\)
is bounded. Consider
\[
 B
 :=
 R_{\alpha,\Omega}^{p_2,q_2}
 T_\mu^\alpha
 S_{\alpha,\Omega}^{p_1,q_1}
 :
 \ell^{p_1,q_1}(\Omega)
 \longrightarrow
 \ell^{p_2,q_2}(\Omega).
\]
On finitely supported sequences its matrix is the square testing
matrix $C^\mu$ of Subsection~\ref{subsec:toeplitz-matrices}. By
Proposition~\ref{prop:toeplitz-testing-matrix},
\[
 C^\mu_{\lambda,\lambda}
 =
 \frac{\alpha}{\pi}
 (1+|\lambda|)^{\nu_2-\nu_1}
 \widetilde\mu_\alpha(\lambda).
\]
Lemma~\ref{lem:diagonal-extraction} and
Proposition~\ref{prop:annular-multipliers} therefore give
\begin{equation}\label{eq:5-diagonal-necessity}
 \left\|
 \big(
 (1+|\lambda|)^{\nu_2-\nu_1}
 \widetilde\mu_\alpha(\lambda)
 \big)_{\lambda\in\Omega}
 \right\|_{\ell^{p_{12},q_{12}}}
 \lesssim
 \|T_\mu^\alpha\|_{\mathcal F_\alpha^{p_1,q_1}
 \to\mathcal F_\alpha^{p_2,q_2}}.
\end{equation}
Only finitely supported inputs are used before diagonal extraction,
so no norm-density argument at the large outer endpoint is involved.

\medskip
\noindent
\textit{Step 4: Return to the continuous profiles.}
By \eqref{eq-local-mass-berezin-pointwise},
applied with $R=R_*$, the estimate~\eqref{eq:5-diagonal-necessity} gives
\[
 \left\|
 \big(
 (1+|\lambda|)^{\nu_2-\nu_1}
 \widehat\mu_{R_*}(\lambda)
 \big)_{\lambda\in\Omega}
 \right\|_{\ell^{p_{12},q_{12}}}
 \lesssim
 \|T_\mu^\alpha\|_{\mathcal F_\alpha^{p_1,q_1}
 \to\mathcal F_\alpha^{p_2,q_2}}.
\]
Since $R_*>\Delta_\Omega$, Proposition~\ref{prop:cell-discretization}(a),
now applied to the square cell system associated with $\Omega$, yields
\[
 \big\|
 (1+|\cdot|)^{\nu_2-\nu_1}
 \widehat\mu_{R_*}
 \big\|_{L_{\rm sh}^{p_{12},q_{12}}}
 \lesssim
 \|T_\mu^\alpha\|_{\mathcal F_\alpha^{p_1,q_1}
 \to\mathcal F_\alpha^{p_2,q_2}}.
\]
Proposition~\ref{prop:local-mass-radius} removes the choice of radius,
and Theorem~\ref{thm:local-mass-berezin} gives the equivalent Gaussian
Berezin condition. Consequently, for every fixed $R>0$,
\[
 \|T_\mu^\alpha\|_{\mathcal F_\alpha^{p_1,q_1}
 \to\mathcal F_\alpha^{p_2,q_2}}
 \asymp
 \big\|
 (1+|\cdot|)^{\nu_2-\nu_1}
 \widehat\mu_R
 \big\|_{L_{\rm sh}^{p_{12},q_{12}}}
 \asymp
 \big\|
 (1+|\cdot|)^{\nu_2-\nu_1}
 \widetilde\mu_\alpha
 \big\|_{L_{\rm sh}^{p_{12},q_{12}}}.
\]
Since $q_{12}=\infty$ when $q_1\leq q_2$, whereas $q_{12}<\infty$
when $q_2<q_1$, this proves the required local-mass characterization
in both exponent ranges. Together with
Theorem~\ref{thm:local-mass-berezin}, it completes the boundedness
parts and the asserted norm estimates in
Theorems~\ref{thm:q1leq-q2} and~\ref{thm:q2lt-q1}.

\subsection{Compactness and essential norm}
\label{subsec:compactness-essential}

For a bounded linear map $A:X\to Y$, set
\[
 \|A\|_{\rm ess}
 :=
 \inf\{
   \|A-K\|_{X\to Y}:
   K:X\to Y \text{ is compact and linear}
 \}.
\]
Throughout this subsection, assume that
\[
 T_\mu^\alpha:
 \mathcal F_\alpha^{p_1,q_1}
 \longrightarrow
 \mathcal F_\alpha^{p_2,q_2}
\]
is bounded. By Proposition~\ref{prop:local-mass-radius} and
Theorem~\ref{thm:local-mass-berezin}, the asymptotic local-mass
and Gaussian Berezin tails are independent, up to equivalence, of
the fixed radius and of the choice of profile. We may therefore work
at the radius $R_*$ fixed in~\eqref{eq-R*}. Put
\[
 h_*(z)
 :=
 (1+|z|)^{\nu_2-\nu_1}\widehat\mu_{R_*}(z)
\qquad
\text{and}
\qquad
 E_*
 :=
 \lim_{N\to\infty}
 \big\|
 \mathbf 1_{\{|z|\geq N\}}h_*
 \big\|_{L_{\rm sh}^{p_{12},q_{12}}}.
\]
The limit exists by monotonicity.

\medskip
\noindent
\textit{Step 1: Compactly supported symbols and tail truncation.}
Suppose first that $\mu$ is supported in a fixed disk $B(0,R_0)$.
Then $\mu(B(0,R_0))<\infty$. Uniform point evaluations on this disk
and the finiteness of $\mu$ give
\[
 \left|
 \int_{\mathbb C}
 f(w)\overline w^{\,n}e^{-\alpha|w|^2}\,d\mu(w)
 \right|
 \lesssim_{R_0,\mu}
 R_0^n\,
 \|f\|_{\mathcal F_\alpha^{p_1,q_1}},
 \qquad n\geq0.
\]
For $J\geq0$, define
\[
 T_{\mu,J}^\alpha f(z)
 :=
 \frac{\alpha}{\pi}
 \sum_{n=0}^J
 \frac{\alpha^n z^n}{n!}
 \int_{\mathbb C}
 f(w)\overline w^{\,n}
 e^{-\alpha|w|^2}\,d\mu(w).
\]
Its range is contained in the polynomials of degree at most $J$, so
$T_{\mu,J}^\alpha$ has finite rank.

\smallskip
\noindent
Set
\[
 s_2:=s(p_2,q_2)=\min\{1,p_2,q_2\}.
\]
The monomial estimate and Stirling's formula give
\[
 \|z^n\|_{\mathcal F_\alpha^{p_2,q_2}}
 \asymp 
 \alpha^{-\frac{n}{2}}\sqrt{n!}\,(1+n)^{\frac{1}{2q_2} - \frac{1}{4}}.
\]
Hence the powered
quasi-triangle inequality yields
\[
 \|T_\mu^\alpha-T_{\mu,J}^\alpha\|^{s_2}
 \lesssim_{R_0,\mu}
 \sum_{n>J}
 \left[
   \frac{\alpha^{\frac{n}{2}}R_0^n}{\sqrt{n!}}
   (1+n)^{\frac{1}{2q_2} - \frac{1}{4}}
 \right]^{s_2}
 \longrightarrow0.
\]
Thus compactly supported positive symbols give compact operators.

\smallskip
\noindent
For a general bounded $T_\mu^\alpha$, write
\[
 \mu_N:=\mathbf 1_{B(0,N)}\mu,
 \qquad
\mu_N^{\mathrm{tail}}:=\mu-\mu_N.
\]
The operator $T_{\mu_N}^\alpha$ is compact by the preceding argument.
Moreover,
\[
 \widehat{\mu_N^{\mathrm{tail}}}_{R_*}(z)
 \leq
 \mathbf 1_{\{|z|\geq N-R_*\}}
 \widehat\mu_{R_*}(z).
\]
The boundedness estimate from
Subsection~\ref{subsec:large-boundedness} therefore gives
\[
 \|T_\mu^\alpha\|_{\rm ess}
 \leq
 \|T_{\mu_N^{\mathrm{tail}}}^\alpha\|_{\mathcal F_\alpha^{p_1,q_1}
 \to\mathcal F_\alpha^{p_2,q_2}}
 \lesssim
 \big\|
 \mathbf 1_{\{|z|\geq N-R_*\}}h_*
 \big\|_{L_{\rm sh}^{p_{12},q_{12}}}.
\]
Letting $N\to\infty$, we obtain
\begin{equation}\label{eq:5-essential-upper}
 \|T_\mu^\alpha\|_{\rm ess}
 \lesssim E_*.
\end{equation}
In particular, $E_*=0$ implies compactness.

\medskip
\noindent
\textit{Step 2: The case $q_2<q_1$.}
In this range,
\(
 q_{12}<\infty.
\)
By Subsection~\ref{subsec:large-boundedness},
\[
 h_*\in L_{\rm sh}^{p_{12},q_{12}},
\]
and therefore its outer $\ell^{q_{12}}$-tail tends to zero. Hence
$E_*=0$, and~\eqref{eq:5-essential-upper} shows that every bounded
operator
\[
 T_\mu^\alpha:
 \mathcal F_\alpha^{p_1,q_1}
 \longrightarrow
 \mathcal F_\alpha^{p_2,q_2}
\]
is compact. Since compactness implies boundedness, this completes the
compactness assertion in Theorem~\ref{thm:q2lt-q1}.

\medskip
\noindent
\textit{Step 3: The case $q_1\leq q_2$.}
We now have
\(
 q_{12}=\infty.
\)
Consequently,
\[
 E_*
 =
 \limsup_{k\to\infty}
 \|h_*\|_{L^{p_{12}}(A_k,dA)}.
\]
The upper essential-norm estimate has already been obtained in
Step~1. It remains to prove the reverse inequality.

\smallskip
\noindent
Let $m=(m_\lambda)_{\lambda\in\Omega}$ be the
weighted local-mass sequence defined in
\eqref{eq-weighted-local-mass}.
Proposition~\ref{prop:cell-discretization}(b), applied to the square
cell system associated with $\Omega$, gives
\begin{equation}\label{eq:5-discrete-tail}
 E_*
 \asymp
 \limsup_{k\to\infty}\|m|_{\Omega_k}\|_{\ell^{p_{12}}}.
\end{equation}
Moreover,
\[
 M:=\sup_{k\geq0}\|m|_{\Omega_k}\|_{\ell^{p_{12}}}<\infty.
\]
If $E_*=0$, there is nothing further to prove.
Assume therefore that $E_*>0$. By \eqref{eq:5-discrete-tail}, there exist
a constant $c_*>0$, depending only on the fixed
structural and geometric parameters, and shell indices
$k_j\to\infty$ such that
\[
\|m|_{\Omega_{k_j}}\|_{\ell^{p_{12}}}
\ge c_*E_*,
\qquad j\ge1.
\]
Let
\[
 d_\lambda
 :=
 C^\mu_{\lambda,\lambda}
 =
 \frac{\alpha}{\pi}
 (1+|\lambda|)^{\nu_2-\nu_1}
 \widetilde\mu_\alpha(\lambda).
\]
By \eqref{eq-local-mass-berezin-pointwise},
applied with $R=R_*$, we have
\[
 \|d|_{\Omega_{k_j}}\|_{\ell^{p_{12}}}
 \gtrsim E_*.
\]
By the one-shell multiplier identity in
Proposition~\ref{prop:annular-multipliers}, for each $j$ there is
$u^{(j)}$ supported in $\Omega_{k_j}$ such that
\[
 \|u^{(j)}\|_{\ell^{p_1}}\leq1,
 \qquad
 \|M_du^{(j)}\|_{\ell^{p_2}}
 \gtrsim E_*.
\]
Applying the finite-set assertion of
Lemma~\ref{lem:diagonal-extraction}, and absorbing the resulting
signs into $u^{(j)}$, we obtain vectors $x^{(j)}$ supported in
$\Omega_{k_j}$ satisfying
\[ \|x^{(j)}\|_{\ell^{p_1}}\leq1,
 \qquad
 \|\mathbf 1_{\Omega_{k_j}}C^\mu x^{(j)}\|_{\ell^{p_2}}
 \gtrsim E_*.
\]
Since each $x^{(j)}$ is supported on a single shell,
\[
 \|x^{(j)}\|_{\ell^{p_1,q_1}}\leq1.
\]
By Proposition~\ref{prop:toeplitz-testing-matrix}, after passing to a
sufficiently sparse subsequence of the shells,
\begin{equation}\label{eq:5-separated-testing}
 \|C^\mu x^{(i)}-C^\mu x^{(j)}\|_{\ell^{p_2,q_2}}
 \gtrsim E_*,
 \qquad i\neq j.
\end{equation}
Here Proposition~\ref{prop:toeplitz-testing-matrix}(c) is applied with a fixed lower
bound $\varepsilon_0=cE_*$, where $c>0$ depends only
on the fixed structural and geometric parameters.
The required spacing of the selected shells may depend
on $\frac{M}{E_*}$, but the implicit constant in \eqref{eq:5-separated-testing}
is independent of $\mu$, the selected shell indices,
and the compact operator $K$ considered below.

\smallskip
\noindent
Set
\[
 B
 :=
 R_{\alpha,\Omega}^{p_2,q_2}
 T_\mu^\alpha
 S_{\alpha,\Omega}^{p_1,q_1}.
\]
For the finitely supported vectors $x^{(j)}$,
\[
 Bx^{(j)}=C^\mu x^{(j)}.
\]
Let
\(
 K:
 \mathcal F_\alpha^{p_1,q_1}
 \longrightarrow
 \mathcal F_\alpha^{p_2,q_2}
\)
be compact and put
\(
 L
 :=
 R_{\alpha,\Omega}^{p_2,q_2}
 K
 S_{\alpha,\Omega}^{p_1,q_1}.
\)
Then $L$ is compact. Hence, after passing to a subsequence,
$(Lx^{(j)})_{j \geq 1}$ is Cauchy in $\ell^{p_2,q_2}(\Omega)$. By
\eqref{eq:5-separated-testing} and the $s_2$-triangle inequality,
\[
 E_*^{\,s_2}
 \lesssim
 2\|B-L\|^{s_2}
 +
 \|Lx^{(i)}-Lx^{(j)}\|_{\ell^{p_2,q_2}}^{s_2}.
\]
Letting $i,j\to\infty$ gives
\[
 \|B-L\|_{\ell^{p_1,q_1}\to\ell^{p_2,q_2}}
 \gtrsim E_*.
\]
Since the restriction and synthesis maps are bounded,
\[
 \|B-L\|
 \lesssim
 \|T_\mu^\alpha-K\|_{\mathcal F_\alpha^{p_1,q_1}
 \to\mathcal F_\alpha^{p_2,q_2}}.
\]
The implicit constants are independent of $K$:
although the Cauchy subsequence extracted from
compactness may depend on $K$, the separation
estimate~\eqref{eq:5-separated-testing} holds uniformly on every subsequence.
Taking the infimum over all compact $K$ therefore yields
\begin{equation}\label{eq:5-essential-lower}
 \|T_\mu^\alpha\|_{\rm ess}
 \gtrsim E_*.
\end{equation}
Combining~\eqref{eq:5-essential-upper} and
\eqref{eq:5-essential-lower}, we obtain
\[
 \|T_\mu^\alpha\|_{\rm ess}\asymp E_*.
\]
Finally, Proposition~\ref{prop:local-mass-radius} and
Theorem~\ref{thm:local-mass-berezin} give, for every fixed $R>0$,
\[
  \|T_\mu^\alpha\|_{\rm ess}
 \asymp
 \limsup_{k\to\infty}
 \big\|
 (1+|\cdot|)^{\nu_2-\nu_1}\widehat\mu_R
 \big\|_{L^{p_{12}}(A_k,dA)}
 \asymp
 \limsup_{k\to\infty}
 \big\|
 (1+|\cdot|)^{\nu_2-\nu_1}\widetilde\mu_\alpha
 \big\|_{L^{p_{12}}(A_k,dA)}.
\]
In particular, $T_\mu^\alpha$ is compact if and only if either of
these shell norms tends to zero. This completes the compactness and
essential-norm assertions of Theorem~\ref{thm:q1leq-q2}.

\subsection{Mappings involving the little outer endpoint}

Restriction and reconstruction from Subsection~\ref{subsec:lattice}
give the characterization
\begin{equation}\label{eq:little-endpoint-characterization}
 f\in f_\alpha^{p,\infty}
 \quad\Longleftrightarrow\quad
 f\in\mathcal F_\alpha^{p,\infty}
 \ \text{and}\
 R_{\alpha,\Omega}^{p,\infty}f
 \in\ell_0^{p,\infty}(\Omega).
\end{equation}
Indeed, the forward implication is the little-endpoint restriction
estimate, while the reverse follows from the canonical dual-atom
reconstruction.

\begin{proof}[Proof of Theorem~1.3]

\emph{(a) Boundedness.} Suppose that
\(
 T_\mu^\alpha:
 f_\alpha^{p_1,\infty}
 \longrightarrow
 \mathcal F_\alpha^{p_2,q_2}
\)
is bounded. Since $q_1=\infty$, one has
\(
 q_{12}=q_2.
\)
Let
\[
 d_\lambda
 :=
 \frac{\alpha}{\pi}
 (1+|\lambda|)^{\nu_2-\nu_1}
 \widetilde\mu_\alpha(\lambda),
 \qquad \lambda\in\Omega.
\]
Fix a finite union $E$ of lattice shells and let $x$ be supported in
$E$. Then
\(S_{\alpha,\Omega}^{p_1,\infty}x\)
is a finite normalized-kernel sum and therefore belongs to
$f_\alpha^{p_1,\infty}$. On such inputs,
\[
 R_{\alpha,\Omega}^{p_2,q_2}
 T_\mu^\alpha
 S_{\alpha,\Omega}^{p_1,\infty}x
 =
 C^\mu x.
\]
By \eqref{eq:testing-diagonal}, the diagonal of $C^\mu$ is
$(d_\lambda)_{\lambda\in\Omega}$. The finite-set Rademacher
extraction from Subsection~\ref{subsec:diagonal}, followed by
Proposition~\ref{prop:annular-multipliers}, gives
\[
 \|d\,\mathbf1_E\|_{\ell^{p_{12},q_2}}
 \lesssim
 \|T_\mu^\alpha\|_{
 f_\alpha^{p_1,\infty}\to\mathcal F_\alpha^{p_2,q_2}}.
\]
The implicit constant is independent of $E$ and $\mu$.
Increasing $E$ through finite unions of shells and using monotonicity
of the shell quasi-norms yields
\(
 d\in\ell^{p_{12},q_2}(\Omega)
\)
with the same bound.

\smallskip
\noindent
By \eqref{eq-local-mass-berezin-pointwise},
the sequence $m = (m_\lambda)_{\lambda \in \Omega}$ defined in
\eqref{eq-weighted-local-mass}
belongs to $\ell^{p_{12},q_2}(\Omega)$.
Proposition~\ref{prop:cell-discretization} and
Theorem~\ref{thm:local-mass-berezin} therefore give the corresponding continuous
local-mass and Berezin criteria. Thus the criterion for the
little-source mapping is necessary.

\smallskip
\noindent
Conversely, if this criterion holds, Theorem~\ref{thm:q1leq-q2} when $q_2=\infty$
and Theorem~~\ref{thm:q2lt-q1} when $q_2<\infty$ give the corresponding bounded
operator
\[
 T_\mu^\alpha:
 \mathcal F_\alpha^{p_1,\infty}
 \longrightarrow
 \mathcal F_\alpha^{p_2,q_2}.
\]
Restriction to $f_\alpha^{p_1,\infty}$ proves sufficiency. The
preceding finite-set argument gives the reverse norm estimate, so
the same norm equivalences as in Theorems~\ref{thm:q1leq-q2}--~\ref{thm:q2lt-q1} remain valid.

\smallskip
\noindent
Assume now that $q_2=\infty$. Under the boundedness criterion,
\(
 m\in\ell^{p_{12},\infty}(\Omega).
\)
Let
\( f\in f_\alpha^{p_1,\infty}\) and \(x=R_{\alpha,\Omega}^{p_1,\infty}f\).
By \eqref{eq:little-endpoint-characterization},
\(
 x\in\ell_0^{p_1,\infty}(\Omega).
\)
Proposition~\ref{prop:endpoint-reconstruction} gives
\[
 R_{\alpha,\Omega}^{p_2,\infty}T_\mu^\alpha f
 =
 D^\mu x,
 \qquad
 |D^\mu x|
 \lesssim
 GM_mK_L|x|.
\]
The shell estimate established in the proof of that proposition
shows that $K_L$ preserves $\ell_0^{p_1,\infty}$. Moreover,
\[
 \|(M_m y)|_{\Omega_k}\|_{\ell^{p_2}}
 \leq
 \|m|_{\Omega_k}\|_{\ell^{p_{12}}}
 \|y|_{\Omega_k}\|_{\ell^{p_1}},
\]
so boundedness of the block profile implies that $M_m$ preserves
the little outer endpoint. Finally,
Proposition~\ref{prop:gaussian-matrices} shows that $G$ preserves
it as well. Hence
\(
 R_{\alpha,\Omega}^{p_2,\infty}T_\mu^\alpha f
 \in\ell_0^{p_2,\infty},
\)
and \eqref{eq:little-endpoint-characterization} gives
\(
 T_\mu^\alpha f\in f_\alpha^{p_2,\infty}.
\)
Thus the same boundedness criterion also characterizes
\[
 T_\mu^\alpha:
 f_\alpha^{p_1,\infty}
 \longrightarrow
 f_\alpha^{p_2,\infty}.
\]

\smallskip
\noindent
\emph{(b) Compactness.}
 If $q_2<\infty$, the criterion
already gives compactness of the corresponding large-source
operator by Theorem~\ref{thm:q2lt-q1}, and hence compactness of its restriction
to the little source.

\smallskip
\noindent
Let $q_2=\infty$.
By Proposition~\ref{prop:cell-discretization} and Theorem~\ref{thm:local-mass-berezin}, the compactness criterion is
equivalent to
\[
 \|m|_{\Omega_k}\|_{\ell^{p_{12}}}\longrightarrow0.
\]
If this holds, Theorem~\ref{thm:q1leq-q2} gives compactness of the corresponding
large-to-large operator, hence also of its restriction to the
little source. For the little target, the preceding boundedness
argument places the range in the closed subspace
$f_\alpha^{p_2,\infty}$; compactness in the inherited quasi-norm
therefore follows.

\smallskip
\noindent
Conversely, suppose that the little-source operator is compact and
that $\|m|_{\Omega_k}\|_{\ell^{p_{12}}}$ does not tend to zero. Since compactness implies
boundedness, the preceding argument gives
\[
 M:= \|m\|_{\ell^{p_{12}, \infty}} = \sup_{k \geq 0}  \|m|_{\Omega_k}\|_{\ell^{p_{12}}}<\infty.
\]
Choose $k_j\to\infty$ and $\varepsilon>0$ such that
\( \|m|_{\Omega_{k_j}}\|_{\ell^{p_{12}}} \geq  \varepsilon.
\)
The pointwise comparison
\eqref{eq-local-mass-berezin-pointwise},
together with \eqref{eq:testing-diagonal}, the block multiplier
identity, and finite-set Rademacher extraction, gives vectors
$x^{(j)}$ supported in $\Omega_{k_j}$ such that
\[
 \|x^{(j)}\|_{\ell^{p_1}}\leq1,
 \qquad
 \|\mathbf1_{\Omega_{k_j}}
 C^\mu x^{(j)}\|_{\ell^{p_2}}
 \geq c\varepsilon.
\]
After passing to a sufficiently sparse subsequence,
Proposition~\ref{prop:toeplitz-testing-matrix} makes the vectors
$C^\mu x^{(j)}$ pairwise separated in
$\ell^{p_2,\infty}(\Omega)$.

\smallskip
\noindent
Set
\[
 f_j
 :=
 S_{\alpha,\Omega}^{p_1,\infty}x^{(j)}.
\]
Each $f_j$ is a finite kernel sum, hence belongs to
$f_\alpha^{p_1,\infty}$, and the sequence $(f_j)_{j \geq 1}$ is bounded there.
Since
\(
 R_{\alpha,\Omega}^{p_2,\infty}T_\mu^\alpha f_j
 =
 C^\mu x^{(j)},
\)
boundedness of the normalized restriction gives
\[
 \|T_\mu^\alpha f_i-T_\mu^\alpha f_j\|_
 {\mathcal F_\alpha^{p_2,\infty}}
 \gtrsim \varepsilon
 \qquad (i\neq j).
\]
Thus the image sequence has no Cauchy subsequence, contradicting
compactness. The same argument applies when the target is the
little space, since it carries the inherited quasi-norm. This proves
the compactness assertions.
\end{proof}

\begin{proof}[Proof of Theorem~1.4\textup{(a)}]
Assume $0<q_1<\infty$ and $q_2=\infty$. Necessity follows immediately
from the continuous inclusion
\(
 f_\alpha^{p_2,\infty}
 \subset
 \mathcal F_\alpha^{p_2,\infty}
\)
and Theorem~\ref{thm:q1leq-q2}.

\smallskip
\noindent
Conversely, suppose that the large-target boundedness criterion
holds. By Proposition~\ref{prop:local-mass-radius} we may work at the radius $R_*$, and
Proposition~\ref{prop:cell-discretization}\textup{(a)}, applied to the square cell system
associated with $\Omega$, gives
\[
m = (m_{\lambda})_{\lambda \in \Omega}\in\ell^{p_{12},\infty}(\Omega),
\]
where $m$ is defined in
\eqref{eq-weighted-local-mass}.
Let
\( f\in\mathcal F_\alpha^{p_1,q_1}\) and \(x=R_{\alpha,\Omega}^{p_1,q_1}f\).
Then
\[
 x\in\ell^{p_1,q_1}(\Omega).
\]
Since $q_1<\infty$,
\(
 \|x|_{\Omega_k}\|_{\ell^{p_1}}\longrightarrow0.
\)
By Proposition~\ref{prop:endpoint-reconstruction}, for $L$
sufficiently large,
\[
 R_{\alpha,\Omega}^{p_2,\infty}T_\mu^\alpha f
 =
 D^\mu x,
 \qquad
 |D^\mu x|
 \lesssim
 GM_mK_L|x|.
\]
The same proposition gives the required boundedness of $K_L$ on
$\ell^{p_1,q_1}(\Omega)$. Hence
\(
 K_L|x|\in\ell^{p_1,q_1}(\Omega),
\)
and, since $q_1<\infty$,
\[
 \|(K_L|x|)|_{\Omega_k}\|_{\ell^{p_1}}
 \longrightarrow0.
\]
Now apply the block estimate
\eqref{eq:block-multiplier} from
Proposition~\ref{prop:annular-multipliers}. Since
$m\in\ell^{p_{12},\infty}(\Omega)$,
\[
 \begin{aligned}
 \|(M_mK_L|x|)|_{\Omega_k}\|_{\ell^{p_2}}
 &\leq
 \|m|_{\Omega_k}\|_{\ell^{p_{12}}}
 \|(K_L|x|)|_{\Omega_k}\|_{\ell^{p_1}}
 \longrightarrow0.
 \end{aligned}
\]
Thus
\[
 M_mK_L|x|\in\ell_0^{p_2,\infty}(\Omega).
\]
By Proposition~\ref{prop:gaussian-matrices}, the Gaussian matrix
$G$ preserves the little outer endpoint, and therefore
\[
 D^\mu x\in\ell_0^{p_2,\infty}(\Omega).
\]
Hence
\( R_{\alpha,\Omega}^{p_2,\infty}T_\mu^\alpha f
 \in\ell_0^{p_2,\infty}(\Omega),
\)
and \eqref{eq:little-endpoint-characterization} yields
\[
 T_\mu^\alpha f\in f_\alpha^{p_2,\infty}.
\]
The norm estimate is the same as for the corresponding large-target
mapping, by Proposition~\ref{prop:endpoint-reconstruction} together
with Propositions~\ref{prop:annular-multipliers} and
\ref{prop:gaussian-matrices}.

\smallskip
\noindent
For compactness, by Proposition~\ref{prop:local-mass-radius} and
Proposition~\ref{prop:cell-discretization}\textup{(b)}, the large-target compactness criterion
is equivalent to the vanishing of the corresponding discrete shell
profile,
\[
 \|m|_{\Omega_k}\|_{\ell^{p_{12}}}\longrightarrow0.
\]
Equivalently, by Theorem~\ref{thm:local-mass-berezin}, one may use the corresponding
Gaussian Berezin tail.

\smallskip
\noindent
If this condition holds, Theorem~\ref{thm:q1leq-q2} gives compactness
into $\mathcal F_\alpha^{p_2,\infty}$, while the preceding boundedness
argument places the whole range in the closed subspace
$f_\alpha^{p_2,\infty}$. Hence the operator is compact into the
little target.

\smallskip
\noindent
Conversely, compactness into $f_\alpha^{p_2,\infty}$ implies
compactness into $\mathcal F_\alpha^{p_2,\infty}$ through the
continuous inclusion
\(
 f_\alpha^{p_2,\infty}
 \hookrightarrow
 \mathcal F_\alpha^{p_2,\infty}.
\)
Theorem~\ref{thm:q1leq-q2}, together with
Propositions~\ref{prop:local-mass-radius} and~\ref{prop:cell-discretization}\textup{(b)}, then forces the same vanishing
shell profile.
\end{proof}

\begin{proof}[Proof of Theorem~1.4\textup{(b)}]
Now let $q_1=q_2=\infty$. Then
\[
 \nu_2-\nu_1
 =
 \frac1{p_1}-\frac1{p_2},
 \qquad
 \frac1{p_{12}}
 =
 \left(\frac1{p_2}-\frac1{p_1}\right)_+.
\]
Let $m = (m_{\lambda})_{\lambda \in \Omega}$ be the weighted local-mass sequence
defined in \eqref{eq-weighted-local-mass}.
By Proposition~\ref{prop:cell-discretization} and Theorem~\ref{thm:local-mass-berezin}, the local-mass and Gaussian
Berezin conditions in the theorem are both equivalent to
\begin{equation}\label{eq:large-little-discrete}
 \|m|_{\Omega_k}\|_{\ell^{p_{12}}}\longrightarrow0.
\end{equation}
Assume first \eqref{eq:large-little-discrete}. Then
$m\in\ell^{p_{12},\infty}(\Omega)$, so the corresponding
large-to-large operator is bounded. Let
\( f\in\mathcal F_\alpha^{p_1,\infty}\) and \(x=R_{\alpha,\Omega}^{p_1,\infty}f\).
By Proposition~\ref{prop:endpoint-reconstruction},
\[
 R_{\alpha,\Omega}^{p_2,\infty}T_\mu^\alpha f
 =
 D^\mu x,
 \qquad
 |D^\mu x|
 \lesssim
 GM_mK_L|x|.
\]
The sequence $K_L|x|$ belongs to $\ell^{p_1,\infty}$. Since
$\|m|_{\Omega_k}\|_{\ell^{p_{12}}}\to0$,
\[
 \|(M_mK_L|x|)|_{\Omega_k}\|_{\ell^{p_2}}
 \leq
 \|m|_{\Omega_k}\|_{\ell^{p_{12}}}
 \|(K_L|x|)|_{\Omega_k}\|_{\ell^{p_1}}
 \longrightarrow0.
\]
Hence
\( M_mK_L|x|\in\ell_0^{p_2,\infty}(\Omega),
\)
and Proposition~\ref{prop:gaussian-matrices} gives
\(
 D^\mu x\in\ell_0^{p_2,\infty}(\Omega).
\)
By \eqref{eq:little-endpoint-characterization},
\[
 T_\mu^\alpha f\in f_\alpha^{p_2,\infty}.
\]
This proves boundedness into the little target, with the same norm
estimate as the corresponding large-target operator.

\smallskip
\noindent
Moreover, \eqref{eq:large-little-discrete} is precisely the
large-to-large compactness criterion of Theorem~\ref{thm:q1leq-q2}. Thus
$T_\mu^\alpha$ is compact into
$\mathcal F_\alpha^{p_2,\infty}$. Since its range lies in the closed
subspace $f_\alpha^{p_2,\infty}$, it is compact into the little
target as well.

\smallskip
\noindent
Conversely, suppose that
\(
 T_\mu^\alpha:
 \mathcal F_\alpha^{p_1,\infty}
 \longrightarrow
 f_\alpha^{p_2,\infty}
\)
is bounded. By inclusion into the large target and Theorem~\ref{thm:q1leq-q2},
\[
 M:= \|m\|_{\ell^{p_{12}, \infty}} = \sup_{k \geq 0} \|m|_{\Omega_k}\|_{\ell^{p_{12}}} <\infty.
\]
Assume, toward a contradiction, that
$\|m|_{\Omega_k}\|_{\ell^{p_{12}}}\not\to0$. Choose $k_j\to\infty$ and $\varepsilon>0$ such
that
\[
 \|m|_{\Omega_{k_j}}\|_{\ell^{p_{12}}}\geq\varepsilon
 \qquad\text{for all }j.
\]
As above, \eqref{eq:testing-diagonal}, comparison \eqref{eq-local-mass-berezin-pointwise},
the block multiplier identity, and finite-set Rademacher extraction
give a constant
\(
 c=c(\alpha,R_*,p_1,p_2)>0
\)
and vectors $x^{(j)}$ supported in $\Omega_{k_j}$ such that
\begin{equation}\label{eq:large-little-tests}
 \|x^{(j)}\|_{\ell^{p_1}}\leq1,
 \qquad
 \|\mathbf1_{\Omega_{k_j}}C^\mu x^{(j)}\|_{\ell^{p_2}}
 \geq c\varepsilon .
\end{equation}
The constant $c$ is independent of $\mu$, $\varepsilon$, $j$, and
the chosen shells.

\smallskip
\noindent
Let
\(
 s:=\min\{1,p_2\}.
\)
Let $C_0>0$ be the constant in
\eqref{eq:testing-shell}; thus
\[
 \|\mathbf1_{\Omega_\ell}C^\mu x\|_{\ell^{p_2}}
 \leq
 C_0 M
 e^{-\frac{\alpha}{16}|\ell-k|^2}
 \|x\|_{\ell^{p_1}},
 \qquad
 \operatorname{supp}x\subset\Omega_k.
\]
Here $C_0=C_0(\alpha,p_1,p_2,\Omega)>0$ depends only on $\alpha$, the fixed exponents, and the
fixed sampling-lattice geometry, and is independent of $\mu$, $M$,
$k$, $\ell$, and $x$.

\smallskip
\noindent
Passing to a sufficiently sparse subsequence of $(k_j)_{j \geq 1}$, we may
therefore arrange that
\begin{equation}\label{eq:summable-leakage}
 (C_0M)^s
 \sup_{j\geq1}
 \sum_{i\neq j}
 e^{-\frac{\alpha s}{16}|k_i-k_j|^2}
 \leq
 \frac12(c\varepsilon)^s .
\end{equation}
Because the supports of the $x^{(j)}$ lie in distinct shells, the
sequence
\[
 x:=\sum_{j \geq 1} x^{(j)}
\]
belongs to $\ell^{p_1,\infty}(\Omega)$ with
$\|x\|_{\ell^{p_1,\infty}}\leq1$. Put
\[
 f:=S_{\alpha,\Omega}^{p_1,\infty}x
 \in\mathcal F_\alpha^{p_1,\infty}.
\]
The Gaussian majorant in
Proposition~\ref{prop:toeplitz-testing-matrix}, applied before
integration as in its proof, shows that the infinite matrix action is
absolutely defined coordinatewise and justifies interchange of the
kernel synthesis series with the Toeplitz integral. Consequently,
\[
 R_{\alpha,\Omega}^{p_2,\infty}T_\mu^\alpha f
 =
 C^\mu x
 =
 \sum_{i \geq 1} C^\mu x^{(i)}.
\]
For each $j$, the $s$-triangle inequality,
\eqref{eq:large-little-tests}, \eqref{eq:testing-shell}, and
\eqref{eq:summable-leakage} give
\[
 \|
 \mathbf1_{\Omega_{k_j}}
 R_{\alpha,\Omega}^{p_2,\infty}T_\mu^\alpha f
 \|_{\ell^{p_2}}^s
 \geq
 \|\mathbf1_{\Omega_{k_j}}
 C^\mu x^{(j)}\|_{\ell^{p_2}}^s
 -
 \sum_{i\neq j}
 \|\mathbf1_{\Omega_{k_j}}
 C^\mu x^{(i)}\|_{\ell^{p_2}}^s
\geq
 \frac12(c\varepsilon)^s .
\]
Thus the shell norms of
$R_{\alpha,\Omega}^{p_2,\infty}T_\mu^\alpha f$ do not tend to zero,
contradicting \eqref{eq:little-endpoint-characterization}. Hence
$\|m|_{\Omega_{k}}\|_{\ell^{p_{12}}}\to0$.

\smallskip
\noindent
We have therefore proved that boundedness into the little target is
equivalent to \eqref{eq:large-little-discrete}, and this condition
also implies compactness. The converse implication
compactness $\Rightarrow$ boundedness is immediate. This completes
the proof.
\end{proof}

\subsection{Particular cases}

We conclude this section by recording several particular cases of the
preceding results. These cases connect the mixed-norm theory with
familiar situations from the classical Fock-space setting, while also
illustrating features that are specific to the mixed-norm framework.
We consider the one-index scale, radial power densities, drifting bump
measures, and positive function symbols. The first and last cases allow
direct comparison with previously studied Toeplitz operators on Fock
spaces, whereas the intermediate cases make the growth thresholds,
compactness criteria, and endpoint bounded-but-noncompact behavior
explicit.

\subsubsection{The one-index scale.}
Let
\(p_1=q_1=p\) and \( p_2=q_2=q\).
Then
\[
 \nu_1=  \nu_2=0, 
\text{and}
\qquad
\frac1{p_{12}}
 =
 \frac1{q_{12}}
 =
 \left(\frac1q-\frac1p\right)_+.
\]
If $q<p$, then $p_{12}=q_{12}<\infty$, and the boundedness
criterion reduces to
\(
 \widehat\mu_R\in L^{p_{12}}(\mathbb C).
\)
In this range boundedness and compactness are equivalent.

\smallskip
\noindent
Since $z\mapsto\widehat\mu_R(z)$ is lower semicontinuous, the
$L^\infty$ conditions may equivalently be stated pointwise.
If $p\leq q$, then
\(
 p_{12}=q_{12}=\infty,
\)
and boundedness is equivalent to
\[
 \sup_{z\in\mathbb C}\widehat\mu_R(z)<\infty.
\]
Moreover, compactness is equivalent to
\[
 \widehat\mu_R(z)\longrightarrow0 \qquad \text{as} \qquad |z| \to\infty,
\]
and the essential norm satisfies
\[
 \|T_\mu^\alpha\|_{\mathrm{ess}}
 \asymp
 \limsup_{|z|\to\infty}
 \widehat\mu_R(z).
\]
For $1\leq p,q\leq\infty$, the boundedness and compactness
criteria recover the classical results in
\cite{HuLv11,Mengestie}. Essential norms of Toeplitz operators
on Fock spaces have also been studied in
\cite{HuLu, IsralowitzEssential}, while related off-diagonal
essential-norm results for large Fock spaces were obtained in
\cite{Arroussi}.

\subsubsection{Radial power densities.}
For $\beta\in\mathbb R$, let
\[
 d\mu_\beta(z)=(1+|z|)^\beta\,dA(z).
\]
For every fixed $R>0$,
\[
 \widehat\mu_{\beta,R}(z)
 \asymp
 (1+|z|)^\beta.
\]
Hence
\[
 \|(1+|z|)^{\nu_2-\nu_1}
   \widehat\mu_{\beta,R}\|_{L^{p_{12}}(A_k)}
 \asymp
 (1+k)^{\beta+\nu_2-\nu_1+ \frac{1}{p_{12}}}.
\]
If $q_{12}<\infty$, boundedness is equivalent to
\[
\beta+\nu_2-\nu_1+\frac1{p_{12}}<-\frac1{q_{12}}.
\]
In this case boundedness and compactness are equivalent. If
$q_{12}=\infty$, boundedness is equivalent to
\[
 \beta+\nu_2-\nu_1+\frac1{p_{12}}\leq0,
\]
while compactness is equivalent to
\[
 \beta+\nu_2-\nu_1+\frac1{p_{12}}<0.
\]
Thus the borderline case $\beta+\nu_2-\nu_1+\frac1{p_{12}}=0$ gives a bounded noncompact
Toeplitz operator, with  $\|T_{\mu_\beta}^\alpha\|_{\mathrm{ess}}\asymp1$.

\smallskip
\noindent
The choice $\beta=0$ gives Lebesgue measure. In this case
\[
 d\mu_0=dA,
 \qquad
 T_{\mu_0}^\alpha=I
\]
by the reproducing formula \cite[Proposition~2.8]{FT}. Hence the
preceding criterion reduces to the embedding criterion for the
same-Gaussian mixed-norm Fock spaces.

\smallskip
\noindent
If $q_2<q_1$, then $q_{12}<\infty$ and
\[
 \nu_2-\nu_1+\frac1{p_{12}}
 =
 \left(\frac1{q_2}-\frac1{q_1}\right)
 +
 \left(\frac1{p_1}-\frac1{p_2}\right)
 +
 \left(\frac1{p_2}-\frac1{p_1}\right)_+ \geq
 \frac1{q_2}-\frac1{q_1}
 =
 \frac1{q_{12}}>0,
\]
so the boundedness condition cannot hold.

\smallskip
\noindent
If $q_1\leq q_2$, then $q_{12}=\infty$, and boundedness is
equivalent to
\[
 \nu_2-\nu_1+\frac1{p_{12}}\leq0.
\]
If $p_1\leq p_2$, then $p_{12}=\infty$, and this is precisely
\(
 \nu_2\leq\nu_1.
\)
If $p_2<p_1$, then
\[
 \nu_2-\nu_1+\frac1{p_{12}}
 =
 \frac1{q_2}-\frac1{q_1}\leq0,
\]
while $\nu_2<\nu_1$ follows automatically. Consequently,
\(
 T_{\mu_0}^\alpha=I:
 \mathcal F_\alpha^{p_1,q_1}
 \longrightarrow
 \mathcal F_\alpha^{p_2,q_2}
\)
is bounded precisely when
\( q_1\leq q_2\) and \( \nu_2\leq\nu_1\),
which is exactly the same-Gaussian embedding criterion
\cite[Theorem~1.2]{FT}. 

\subsubsection{Drifting bump measures.}
Fix $R_0>0$ and let $(z_j)_{j \geq 1}$ be uniformly separated with
$|z_j|\to\infty$. For $a_j\geq0$, set
\[
 d\mu
 =
 \sum_{j \geq 1}  a_j\mathbf1_{B(z_j,R_0)}\,dA,
 \qquad
 b_j
 :=
 (1+|z_j|)^{\nu_2-\nu_1}a_j.
\]
Let
\[
 Z:=\{z_j:j\geq1\},
 \qquad
 Z_k:=Z\cap A_k, \qquad  J_k:=\{j:k\leq |z_j|<k+1\},
\]
and identify a sequence on $Z$ with $(c_j)_{j \geq 1}$ by writing
\[
 \|c\|_{\ell^{p,q}(Z)}
 :=
 \left\|
 \left(
 \|(c_j)_{j\in J_k}\|_{\ell^p}
 \right)_{k\geq0}
 \right\|_{\ell^q}.
\]
Then, for every fixed $R>0$,
\begin{equation}\label{eq:bumps}
 \|(1+|z|)^{\nu_2-\nu_1}\widehat\mu_R\|
 _{L_{\rm sh}^{p_{12},q_{12}}}
 \asymp  \|b\|_{\ell^{p_{12},q_{12}}(Z)}.
\end{equation}
Indeed,
\[
 \widehat\mu_R(z)
 \lesssim
 \sum_{|z-z_j|<R+R_0}a_j.
\]
Uniform separation gives bounded overlap of the enlarged disks, while
\[
 1+|z|\asymp1+|z_j|
 \qquad\text{whenever}\qquad
 |z-z_j|<R+R_0.
\]
This yields the upper estimate in \eqref{eq:bumps}. For the reverse
estimate, choose $r>0$ so small that the disks $B(z_j,r)$ are
disjoint and
\[
 r<\frac14\min\{R,R_0\}.
\]
For $z\in B(z_j,r)$, the intersection
$B(z,R)\cap B(z_j,R_0)$ contains a disk of fixed positive radius.
Hence
\[
 (1+|z|)^{\nu_2-\nu_1}\widehat\mu_R(z)
 \gtrsim b_j
 \qquad
 (z\in B(z_j,r)).
\]
Since these test disks are disjoint and have uniformly positive
area, regrouping over finitely many neighboring shells gives the
reverse estimate in \eqref{eq:bumps}. Since the enlarged and test disks meet only a fixed number of
neighboring shells, the same estimates hold after deleting the first
$N$ shells, up to shifting $N$ by a fixed amount. Hence the
corresponding tail quasi-norms are equivalent as well. The implicit constants in \eqref{eq:bumps} and the corresponding
tail comparisons depend only on the fixed radii $R,R_0$,
the exponent pairs, and the separation constant of $Z$.

\smallskip
\noindent
Consequently,
\(
 T_\mu^\alpha:
 \mathcal F_\alpha^{p_1,q_1}
 \longrightarrow
 \mathcal F_\alpha^{p_2,q_2}
\)
is bounded if and only if
\[
 b\in\ell^{p_{12},q_{12}}(Z),
\qquad \text{or equivalently,} \qquad
 \big((1+|z_j|)^{\nu_2-\nu_1}a_j\big)_{j \geq 1}
 \in\ell^{p_{12},q_{12}}(Z).
\]
If $q_{2}<q_1$, this condition already implies compactness.
If $q_{1} \leq q_2$, boundedness is equivalent to
\( b\in\ell^{p_{12},\infty}(Z),\)
whereas compactness is equivalent to
\( b\in\ell_0^{p_{12},\infty}(Z).\)
Moreover,
\[
 \|T_\mu^\alpha\|_{\mathrm{ess}}
 \asymp
 \limsup_{k\to\infty}
 \|(b_j)_{j\in J_k}\|_{\ell^{p_{12}}}.
\]
In particular, suppose $q_{1} \leq q_2$ and  choose an increasing sequence of shell indices $k_j\to\infty$,
with the shells sufficiently far apart, and choose one center
$z_j\in A_{k_j}$ for each $j$. If
\(
 a_j=(1+|z_j|)^{\nu_1-\nu_2},
\)
then $b_j=1$. Hence
\[
 b\in\ell^{p_{12},\infty}(Z)
 \setminus
 \ell_0^{p_{12},\infty}(Z).
\]
The corresponding Toeplitz operator is therefore bounded but not
compact, and
\[
 \|T_\mu^\alpha\|_{\mathrm{ess}}\asymp1.
\]

\subsubsection{Positive function symbols.}
Let $\varphi\geq0$ be locally integrable and let
\[
 d\mu_\varphi(z)=\varphi(z)\,dA(z).
\]
Whenever the integral is defined, $T_{\mu_\varphi}^\alpha$ is the
usual Toeplitz operator with positive function symbol $\varphi$:
\[
 T_{\mu_\varphi}^\alpha f(z)
 =
 \frac{\alpha}{\pi}
 \int_{\mathbb C}
 f(w)\varphi(w)e^{\alpha z\overline w-\alpha|w|^2}\,dA(w)
 =
 P_\alpha(\varphi f)(z),
\]
where $P_\alpha$ denotes the usual Fock projection.
Moreover,
\[
 \widehat\mu_{\varphi,R}(z)
 =
 \int_{B(z,R)}\varphi(w)\,dA(w).
\]
Hence
\(
 T_{\mu_\varphi}^\alpha:
 \mathcal F_\alpha^{p_1,q_1}
 \longrightarrow
 \mathcal F_\alpha^{p_2,q_2}
\)
is bounded if and only if
\[
 (1+|z|)^{\nu_2-\nu_1}
 \int_{B(z,R)}\varphi(w)\,dA(w)
 \in
 L_{\rm sh}^{p_{12},q_{12}}.
\]
If $q_2<q_1$, boundedness and compactness are equivalent.
If $q_1 \leq q_2$, compactness is equivalent to
\[
 \left\|
 (1+|z|)^{\nu_2-\nu_1}
 \int_{B(z,R)}\varphi(w)\,dA(w)
 \right\|_{L^{p_{12}}(A_k)}
 \longrightarrow0
 \qquad (k\to\infty),
\]
and
\[
 \|T_{\mu_\varphi}^\alpha\|_{\mathrm{ess}}
 \asymp
 \limsup_{k\to\infty}
 \left\|
 (1+|z|)^{\nu_2-\nu_1}
 \int_{B(z,R)}\varphi(w)\,dA(w)
 \right\|_{L^{p_{12}}(A_k)}.
\]
Thus, for positive function symbols, the relevant quantity is not
the pointwise size of $\varphi$, but its fixed-radius local averages.
Toeplitz operators with function symbols on the classical Fock spaces
have been studied extensively; see, for example,
\cite{BergerCoburn87,CoburnLiIsralowitz,GrudskyVasilevski,Stroethoff92,Stroethoff98}.
The preceding characterization extends this classical
function-symbol setting to the mixed-norm Fock spaces, including the
quasi-Banach and outer-endpoint regimes.

\section{Positive Toeplitz sesquilinear forms}
\label{sec:forms}

The positive Toeplitz form requires a separate argument from the
operator results of Section~5, since in the quasi-Banach target range
a form estimate does not in general determine the operator quasi-norm.
Recall the generalized conjugate $t^*$ and the form exponents
$\hat p,\hat q,\sigma_2$ introduced in Subsection~\ref{subsec:forms}.
We shall also use the following finite-support norming identity. If
$0<p,q\leq\infty$ and $c$ is finitely supported on a locally finite
configuration $\Lambda$, then
\begin{equation}\label{eq:finite-norming}
 \sup_{\substack{
       a\in c_{00}(\Lambda)\\
       \|a\|_{\ell^{p,q}}\leq1}}
 \left|
   \sum_{\lambda\in\Lambda}a_\lambda c_\lambda
 \right|
 =
 \|c\|_{\ell^{p^*,q^*}}.
\end{equation}
See \cite[Lemma~7.2]{SI}. We shall apply
\eqref{eq:finite-norming} only to finite truncations, so no
infinite-dimensional duality assertion is involved.

\subsection{A weighted one-Carleson estimate}

\begin{proposition}\label{prop:w-carleson}
Let $\beta>0$, $0<p,q\leq\infty$, and $\sigma\in\mathbb R$, and put
\[
 \rho:=\frac1p-\frac1q-\sigma.
\]
Let $\mu$ be a positive locally finite Borel measure. If, for some
fixed $R>0$,
\[
 (1+|z|)^\rho\widehat\mu_R(z)
 \in L_{\rm sh}^{p^*,q^*}(\mathbb C),
\]
then, for every $h\in\mathcal F_{\beta;\sigma}^{p,q}$,
\[
 \int_{\mathbb C}
 |h(z)|e^{-\frac{\beta}{2}|z|^2}\,d\mu(z)
 \lesssim
 \left\|
 (1+|\cdot|)^\rho\widehat\mu_R
 \right\|_{L_{\rm sh}^{p^*,q^*}}
 \|h\|_{\mathcal F_{\beta;\sigma}^{p,q}}.
\]
In particular, the integral on the left is absolutely convergent. 
The implicit constant depends only on $\beta,p,q,\sigma$,
the fixed radius $R$, and the fixed geometric data.
\end{proposition}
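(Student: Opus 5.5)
We discretize the integral over an exact bounded-geometry cell system and then apply the finite-support norming identity \eqref{eq:finite-norming}, in its Hölder direction, on the sequence level. By Proposition~\ref{prop:local-mass-radius} we may replace $R$ by any convenient fixed radius, so we take the square cells $(Q_\omega)_{\omega\in\Omega}$ of Subsection~\ref{subsec:lattice} and a radius $R_*>\Delta_\Omega$. Summing over the exact partition gives
\[
 \int_{\mathbb C}|h|e^{-\frac{\beta}{2}|z|^2}\,d\mu
 \le\sum_{\omega\in\Omega}\mu(Q_\omega)\sup_{w\in Q_\omega}|h(w)|e^{-\frac{\beta}{2}|w|^2}.
\]
For the local supremum we use subharmonicity of $|h|^{s}e^{-\frac{\beta s}{2}|\cdot|^2}$ up to the standard Gaussian correction, with $s=\min\{1,p,q\}$ or any small power. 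This bounds $\sup_{Q_\omega}|h|e^{-\frac{\beta}{2}|\cdot|^2}$ by a fixed multiple of the $L^{s}$ average of $|h|e^{-\frac{\beta}{2}|\cdot|^2}$ over $B(\omega,r_0)$, and hence by a local maximal quantity $a_\omega$.

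The key intermediate bound is the discrete sampling inequality
\[
 \big\|\big((1+|\omega|)^{\frac1q-\frac1p+\sigma}a_\omega\big)_\omega\big\|_{\ell^{p,q}(\Omega)}\lesssim\|h\|_{\mathcal F_{\beta;\sigma}^{p,q}},
\]
which is exactly the boundedness of the normalized restriction in the weighted form. It can be cited from \cite[Proposition 3.1]{SI} with $\sigma$ weights, since the polynomial weight is moderate on unit-size cells. Alternatively, it can be derived directly from the annular discretization \eqref{eq:annular}: in each shell, the $\ell^p$ norm of the local averages is controlled by the $L^p$ norm of $|h|e^{-\frac{\beta}{2}|\cdot|^2}$ over neighbouring annuli, which is $\asymp(1+k)^{1/p}B_{p,\sigma}(h;k)(1+k)^{-\sigma}$. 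The factor $(1+k)^{1/q}$ then produces the normalization. Note that $\nu(p,q)+\sigma=\frac1q-\frac1p+\sigma=-\rho$.

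Writing $\mu(Q_\omega)a_\omega=\big[(1+|\omega|)^{\rho}\mu(Q_\omega)\big]\cdot\big[(1+|\omega|)^{-\rho}a_\omega\big]$, we apply the easy Hölder half of \eqref{eq:finite-norming} to finite truncations. This is the inequality $\sum|a_\lambda c_\lambda|\le\|a\|_{\ell^{p,q}}\|c\|_{\ell^{p^*,q^*}}$, valid in the full range $0<p,q\le\infty$ because for exponents below one the embedding $\ell^p\subset\ell^1$ gives $p^*=\infty$. Monotone convergence over truncations then gives
\[
 \int|h|e^{-\frac{\beta}{2}|\cdot|^2}d\mu\lesssim\big\|\big((1+|\omega|)^\rho\widehat\mu_{R_*}(\omega)\big)_\omega\big\|_{\ell^{p^*,q^*}(\Omega)}\,\|h\|_{\mathcal F_{\beta;\sigma}^{p,q}},
\]
using $\mu(Q_\omega)\le\widehat\mu_{R_*}(\omega)$. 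Finally, Proposition~\ref{prop:cell-discretization}(a), with exponents $p^*,q^*$ and weight $\rho$, converts the sequence norm into $\|(1+|\cdot|)^\rho\widehat\mu_R\|_{L^{p^*,q^*}_{\rm sh}}$, and absolute convergence follows from the finiteness of the bound.

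The main obstacle is the local-sup-to-sequence step in the quasi-Banach range $p<1$ or $q<1$ and at $p=\infty$. The sup over a cell must be dominated by a quantity whose $\ell^{p,q}$ norm is controlled, and this must hold uniformly for every Gaussian parameter $\beta$ with the matching weight $e^{-\beta|z|^2/2}$. We would handle this by the standard plurisubharmonic estimate $|h(w)|^se^{-\frac{\beta s}{2}|w|^2}\lesssim\int_{B(w,1)}|h|^se^{-\frac{\beta s}{2}|\cdot|^2}dA$. We would then note that enlarged disks meet only boundedly many shells, and use $\ell^{p/s}$ with $p/s\ge1$ to run Minkowski, exactly as in the shell arguments of Proposition~\ref{prop:gaussian-matrices}.
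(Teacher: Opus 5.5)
Your proposal is correct and follows essentially the same route as the paper: square cells at the working radius $R_*$, then a weighted submean bound on each cell fed through bounded overlap and the annular discretization \eqref{eq:annular} to get $\|((1+|\omega|)^{-\rho}\sup_{Q_\omega}|h|e^{-\frac{\beta}{2}|\cdot|^2})_\omega\|_{\ell^{p,q}}\lesssim\|h\|_{\mathcal F_{\beta;\sigma}^{p,q}}$, then the H\"older half of \eqref{eq:finite-norming} on finite truncations, monotone convergence, and Proposition~\ref{prop:cell-discretization}(a). The differences are cosmetic: the paper applies the submean inequality directly at power $p$ (no detour through a small power $s$ and Jensen) and treats $p=\infty$ by taking suprema over neighbouring annuli, and your alternative citation of \cite[Proposition 3.1]{SI} concerns point restriction on the unweighted $\alpha$-scale, so the direct derivation you sketch is the one actually needed.
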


\begin{proof}
By Proposition~\ref{prop:local-mass-radius}, we may replace $R$ by the fixed radius $R_*$.
We use the square-cell system $(Q_\omega)_{\omega\in\Omega}$ from
Subsection~\ref{subsec:lattice}. Set
\[
 d_\omega
 :=
 (1+|\omega|)^{-\rho}
 \sup_{z\in Q_\omega}
 |h(z)|e^{-\frac{\beta}{2}|z|^2}
\qquad \text{and} \qquad
 m_\omega
 :=
 (1+|\omega|)^\rho
 \widehat\mu_{R_*}(\omega).
\]
We first show that
\begin{equation}\label{eq:one-carleson-d}
 \|d\|_{\ell^{p,q}(\Omega)}
 \lesssim
 \|h\|_{\mathcal F_{\beta;\sigma}^{p,q}}.
\end{equation}
Suppose first that $p<\infty$. Since
\(
 Q_\omega\subset B(\omega,\Delta_\Omega),
\)
the Fock-weighted submean estimate
on a fixed disk (\cite[Lemma~2.32]{Zhu}) gives
\[
 \sup_{z\in Q_\omega}
 |h(z)|^p e^{-\frac{p\beta}{2} |z|^2}
 \lesssim
 \int_{B(\omega,2\Delta_\Omega)}
 |h(w)|^p e^{-\frac{p\beta}{2} |w|^2}\,dA(w).
\]
The disks $B(\omega,2\Delta_\Omega)$ have bounded overlap.
If $\omega\in\Omega_k$, they meet only finitely many annuli $A_j$
with $|j-k|\leq J$, where $J$ is fixed. Since the area of $A_k$
is comparable to $1+k$, the definition of $d_\omega$ gives
\[
 \|d|_{\Omega_k}\|_{\ell^p}
 \lesssim
 (1+k)^{\frac{1}{q}}
 \left(
 \sum_{|j-k|\leq J}
 B_{p,\sigma}(h;j)^p
 \right)^{\frac{1}{p}}.
\]
Taking the outer $\ell^q$ quasi-norm, using finite-shell overlap and
the annular discretization from Subsection~\ref{subsec:mixed-norm-tools}, yields
\eqref{eq:one-carleson-d}.

\smallskip
\noindent
If $p=\infty$, the supremum over each cell $Q_\omega$ is controlled
by the corresponding suprema over finitely many neighboring annuli,
and the same argument gives \eqref{eq:one-carleson-d}.

\smallskip
\noindent
By Proposition~\ref{prop:cell-discretization},
\begin{equation}\label{eq:one-carleson-m}
 \|m\|_{\ell^{p^*,q^*}(\Omega)}
 \asymp
 \left\|
 (1+|\cdot|)^\rho\widehat\mu_{R_*}
 \right\|_{L_{\rm sh}^{p^*,q^*}}.
\end{equation}
Let $F\subset\Omega$ be finite. Since $Q_\omega\subset B(\omega,R_*)$,
\[
 \sum_{\omega\in F}
 \int_{Q_\omega}
 |h(z)|e^{-\frac{\beta}{2}|z|^2}\,d\mu(z)
 \leq
 \sum_{\omega\in F} d_\omega m_\omega.
\]
The finite-support norming identity \eqref{eq:finite-norming} therefore gives
\[
 \sum_{\omega\in F}d_\omega m_\omega
 \leq
 \|d\|_{\ell^{p,q}(\Omega)}
 \|m\|_{\ell^{p^*,q^*}(\Omega)}.
\]
Combining \eqref{eq:one-carleson-d} and
\eqref{eq:one-carleson-m}, we obtain
\[
 \sum_{\omega\in F}
 \int_{Q_\omega}
 |h(z)|e^{-\frac{\beta}{2}|z|^2}\,d\mu(z)
 \lesssim
 \left\|
 (1+|\cdot|)^\rho\widehat\mu_{R_*}
 \right\|_{L_{\rm sh}^{p^*,q^*}}
 \|h\|_{\mathcal F_{\beta;\sigma}^{p,q}}.
\]
The implicit constant is independent of the finite
set $F$. Increasing $F$ through finite unions of shells and applying monotone
convergence gives
\[
 \int_{\mathbb C}
 |h(z)|e^{-\frac{\beta}{2}|z|^2}\,d\mu(z)
 \lesssim
 \left\|
 (1+|\cdot|)^\rho\widehat\mu_{R_*}
 \right\|_{L_{\rm sh}^{p^*,q^*}}
 \|h\|_{\mathcal F_{\beta;\sigma}^{p,q}}.
\]
Finally, Proposition~\ref{prop:local-mass-radius} replaces $R_*$ by the original fixed radius
$R$. In particular, the integral on the left is absolutely
convergent.
\end{proof}

\subsection{Proof of Theorem~\ref{thm:form}}

For the proof, define
\[
 \frac1{\check p}
 :=
 \frac1{p_1}+\frac1{p_2^*},
 \qquad
 \frac1{\check q}
 :=
 \frac1{q_1}+\frac1{q_2^*}.
\]
Then, by the definitions of $\hat p$ and $\hat q$,
\begin{equation}\label{eq:form-hat-check}
 \hat p=(\check p)^*,
 \qquad
 \hat q=(\check q)^*.
\end{equation}
Moreover,
\begin{equation}\label{eq:form-weight-identity}
 \frac1{\check p}
 -
 \frac1{\check q}
 -
 \sigma_2
 =
 \nu_2-\nu_1.
\end{equation}

\smallskip
\noindent
\textit{Sufficiency.}
Let
\( f\in\mathcal F_\alpha^{p_1,q_1}\), \( g\in\mathcal F_{\alpha;\sigma_2}^{p_2^*,q_2^*}\).
Generalized H\"older's inequality on each circle gives
\[
 M_{\check p}(fg,r)
 \leq
 M_{p_1}(f,r)M_{p_2^*}(g,r).
\]
Indeed, when all the relevant exponents are finite, the H\"older
exponents are
\(\frac{p_1}{\check p}\), \(\frac{p_2^*}{\check p}\),
whose reciprocals sum to one; the infinite endpoints are interpreted
in the usual way. Applying the same argument in the radial variable
gives
\begin{equation}\label{eq:form-product}
 \|fg\|_{\mathcal F_{2\alpha;\sigma_2}^{\check p,\check q}}
 \lesssim
 \|f\|_{\mathcal F_\alpha^{p_1,q_1}}
 \|g\|_{\mathcal F_{\alpha;\sigma_2}^{p_2^*,q_2^*}}.
\end{equation}
We apply Proposition~\ref{prop:w-carleson} with
\(
 (p,q,\beta,\sigma)
 =
 (\check p,\check q,2\alpha,\sigma_2).
\)
By \eqref{eq:form-weight-identity}, the corresponding exponent
\[
 \rho
 =
 \frac1{\check p}
 -
 \frac1{\check q}
 -
 \sigma_2 = \nu_2-\nu_1,
\]
while
\eqref{eq:form-hat-check} gives
\(p^*=\hat p\), \(q^*=\hat q\).
Hence
\[
 |\mathcal T_\mu^\alpha(f,g)|
 \leq
 \frac{\alpha}{\pi}
 \int_{\mathbb C}
 |f(w)g(w)|e^{-\alpha|w|^2}\,d\mu(w)
\lesssim
 \left\|
 (1+|\cdot|)^{\nu_2-\nu_1}\widehat\mu_R
 \right\|_{L_{\rm sh}^{\hat p,\hat q}}
 \|fg\|_{\mathcal F_{2\alpha;\sigma_2}^{\check p,\check q}}.
\]
Together with \eqref{eq:form-product}, this yields
\[
 |\mathcal T_\mu^\alpha(f,g)|
 \lesssim
 \left\|
 (1+|\cdot|)^{\nu_2-\nu_1}\widehat\mu_R
 \right\|_{L_{\rm sh}^{\hat p,\hat q}}
 \|f\|_{\mathcal F_\alpha^{p_1,q_1}}
 \|g\|_{\mathcal F_{\alpha;\sigma_2}^{p_2^*,q_2^*}}.
\]
Thus
\begin{equation}\label{eq:form-upper}
 \|\mathcal T_\mu^\alpha\|_{\mathrm{form}}
 \lesssim
 \left\|
 (1+|\cdot|)^{\nu_2-\nu_1}\widehat\mu_R
 \right\|_{L_{\rm sh}^{\hat p,\hat q}}.
\end{equation}
Proposition~\ref{prop:w-carleson} also gives the absolute convergence of the defining
integral.

\smallskip
\noindent
\textit{Necessity.}
Assume that $\mathcal T_\mu^\alpha$ is bounded. Let $x$ and $y$ be
finitely supported sequences on $\Omega$, and let
$(\varepsilon_\lambda)_{\lambda\in\Omega}$ be independent Rademacher
signs. Set
\[
 f_\varepsilon
 :=
 \sum_{\lambda\in\Omega}
 \varepsilon_\lambda x_\lambda
 (1+|\lambda|)^{-\nu_1}
 \kappa_{\alpha,\lambda},
\qquad \text{and} \qquad
 g_\varepsilon
 :=
 \sum_{\lambda\in\Omega}
 \varepsilon_\lambda y_\lambda
 (1+|\lambda|)^{-\nu(p_2^*,q_2^*)-\sigma_2}
 \kappa_{\alpha,\lambda}.
\]
Since the sums defining $f_\varepsilon$ and $g_\varepsilon$ are finite,
we may expand the form and take expectation term by term. Using
\(
 \mathbb E(\varepsilon_\lambda\varepsilon_\gamma)
 =
 \delta_{\lambda\gamma},
\)
all off-diagonal terms vanish. Hence
\[
 \mathbb E\,
 \mathcal T_\mu^\alpha(f_\varepsilon,g_\varepsilon)
 =
 \frac{\alpha}{\pi}
 \sum_{\lambda\in\Omega}
 x_\lambda\overline{y_\lambda}\,
 (1+|\lambda|)^{
 -\nu_1-\nu(p_2^*,q_2^*)-\sigma_2}
 \int_{\mathbb C}
 |\kappa_{\alpha,\lambda}(w)|^2
 e^{-\alpha|w|^2}\,d\mu(w).
\]
For the normalized kernels,
\[
 \int_{\mathbb C}
 |\kappa_{\alpha,\lambda}(w)|^2
 e^{-\alpha|w|^2}\,d\mu(w)
 =
 \widetilde\mu_\alpha(\lambda).
\]
Moreover,
\(
 -\nu_1-\nu(p_2^*,q_2^*)-\sigma_2
 =
 \nu_2-\nu_1.
\)
Consequently,
\[
 \mathbb E\,
 \mathcal T_\mu^\alpha(f_\varepsilon,g_\varepsilon)
 =
 \frac{\alpha}{\pi}
 \sum_{\lambda\in\Omega}
 x_\lambda\overline{y_\lambda}\,
 (1+|\lambda|)^{\nu_2-\nu_1}
 \widetilde\mu_\alpha(\lambda).
\]
On the other hand, the boundedness of the form and the
synthesis estimates give, uniformly in the signs and
the finite supports of $x$ and $y$,

\[
 |\mathcal T_\mu^\alpha(f_\varepsilon,g_\varepsilon)|
 \lesssim
 \|\mathcal T_\mu^\alpha\|_{\mathrm{form}}
 \|x\|_{\ell^{p_1,q_1}}
 \|y\|_{\ell^{p_2^*,q_2^*}}.
\]
Thus
\begin{equation}\label{eq:form-discrete-pairing}
 \left|
 \sum_{\lambda\in\Omega}
 x_\lambda\overline{y_\lambda}\,
 (1+|\lambda|)^{\nu_2-\nu_1}
 \widetilde\mu_\alpha(\lambda)
 \right|
 \lesssim
 \|\mathcal T_\mu^\alpha\|_{\mathrm{form}}
 \|x\|_{\ell^{p_1,q_1}}
 \|y\|_{\ell^{p_2^*,q_2^*}}.
\end{equation}
Put
\[
 \widetilde m_\lambda
 :=
 (1+|\lambda|)^{\nu_2-\nu_1}
 \widetilde\mu_\alpha(\lambda).
\]

\smallskip
\noindent
Every finitely supported
$z\in\ell^{\check p,\check q}(\Omega)$ admits a factorization
\( z_\lambda=x_\lambda\overline{y_\lambda}
\)
such that
\begin{equation}\label{eq:form-factorization}
 \|x\|_{\ell^{p_1,q_1}}
 \|y\|_{\ell^{p_2^*,q_2^*}}
 \leq
 \|z\|_{\ell^{\check p,\check q}}.
\end{equation}
Indeed, on each nonzero shell $\Omega_k$, first factor
$z|_{\Omega_k}$ according to
\(
 \frac1{\check p}
 =
 \frac1{p_1}+\frac1{p_2^*},
\)
with the two factors normalized to have
$\ell^{p_1}$- and $\ell^{p_2^*}$-norm one.  The resulting shell
amplitudes
\( c_k:=\|z|_{\Omega_k}\|_{\ell^{\check p}}
\)
are then factorized according to
\(
 \frac1{\check q}
 =
 \frac1{q_1}+\frac1{q_2^*}.
\)
Combining the two factorizations gives
\eqref{eq:form-factorization}.  If one of the exponents is infinite,
the corresponding factor is taken to have unit modulus on the
relevant finite support.

\smallskip
\noindent
It follows from
\eqref{eq:form-discrete-pairing}--\eqref{eq:form-factorization} that
\[
 \left|
 \sum_{\lambda\in\Omega}
 z_\lambda\widetilde m_\lambda
 \right|
 \lesssim
 \|\mathcal T_\mu^\alpha\|_{\mathrm{form}}
 \|z\|_{\ell^{\check p,\check q}}
\]
for every finitely supported $z$.

\smallskip
\noindent
For $N\geq0$, let
\[
 \widetilde m^{(N)}
 :=
 \widetilde m\,
 \mathbf 1_{\bigcup_{k=0}^{N}\Omega_k}.
\]
Since $\widetilde m^{(N)}$ has finite support, the finite-support
norming identity \eqref{eq:finite-norming} gives
\[
 \|\widetilde m^{(N)}\|_
 {\ell^{(\check p)^*,(\check q)^*}}
 \lesssim
 \|\mathcal T_\mu^\alpha\|_{\mathrm{form}}.
\]
Using \eqref{eq:form-hat-check} and then letting $N\to\infty$, we get
\begin{equation}\label{eq:form-discrete-profile}
 \|\widetilde m\|_{\ell^{\hat p,\hat q}(\Omega)}
 \lesssim
 \|\mathcal T_\mu^\alpha\|_{\mathrm{form}}.
\end{equation}
We now return to the weighted local-mass sequence
$m=(m_\lambda)_{\lambda\in\Omega}$ defined in
\eqref{eq-weighted-local-mass}.
By \eqref{eq-local-mass-berezin-pointwise},
applied with $R=R_*$, and \eqref{eq:form-discrete-profile}, we obtain
\[
\|m\|_{\ell^{\hat p,\hat q}(\Omega)}
\lesssim \|T_\mu^\alpha\|_{\mathrm{form}}.
\]
By Proposition~~\ref{prop:cell-discretization}(a) and the radius stability
in Proposition~\ref{prop:local-mass-radius}, for every fixed $R>0$,
\[
\|(1+|\cdot|)^{\nu_2-\nu_1}\widehat\mu_R
\|_{L_{\mathrm{sh}}^{\hat p,\hat q}}
\asymp
\|m\|_{\ell^{\hat p,\hat q}(\Omega)}.
\]
Consequently,
\begin{equation}\label{eq:form-lower}
\|(1+|\cdot|)^{\nu_2-\nu_1}\widehat\mu_R
\|_{L_{\mathrm{sh}}^{\hat p,\hat q}}
\lesssim \|T_\mu^\alpha\|_{\mathrm{form}}.
\end{equation}
Theorem~\ref{thm:local-mass-berezin} gives the equivalent Gaussian
Berezin condition.

\smallskip
\noindent
Combining \eqref{eq:form-upper} and \eqref{eq:form-lower}, we conclude
that
\[
 \|\mathcal T_\mu^\alpha\|_{\mathrm{form}}
 \asymp
 \left\|
 (1+|\cdot|)^{\nu_2-\nu_1}\widehat\mu_R
 \right\|_{L_{\rm sh}^{\hat p,\hat q}}
 \asymp
 \left\|
 (1+|\cdot|)^{\nu_2-\nu_1}\widetilde\mu_\alpha
 \right\|_{L_{\rm sh}^{\hat p,\hat q}}.
\]
This completes the proof.

\section*{Acknowledgements}

X.~F. is
supported by a grant from the National Science and Technology Council, Taiwan
(NSTC 114-2115-M-A49-003-MY3).

\bigskip
\textit{AI Statement.}
The authors used artificial-intelligence tools for language editing,
\LaTeX\ formatting, and limited assistance with local mathematical
reasoning.  The proof strategy and mathematical development are the
authors' own; all mathematical content was independently written and checked by the authors, who take full responsibility for it.


\begin{thebibliography}{99}
\bibitem{Arroussi} H. Arroussi, H. He, J. Li, and C. Tong, \emph{Toeplitz operators between large Fock spaces}, Banach J. Math. Anal. \textbf{16} (2022), Article 32.

\bibitem{BergerCoburn87}
C.~A. Berger and L.~A. Coburn,
\emph{Toeplitz operators on the Segal--Bargmann space},
Trans. Amer. Math. Soc. \textbf{301} (1987), no.~2, 813--829.

\bibitem{BlascoGalbis}
O. Blasco and A. Galbis,
\emph{Boundedness and compactness of Hausdorff operators on Fock spaces},
Trans. Amer. Math. Soc. \textbf{377} (2024), 5165--5196.

\bibitem{CoburnLiIsralowitz}
L.~A.~Coburn, J.~Isralowitz, and B.~Li,
\emph{Toeplitz operators with BMO symbols on the Segal--Bargmann space},
Trans. Amer. Math. Soc. \textbf{363} (2011), 3015--3030.

\bibitem{ConstantinPelaez}
O. Constantin and J. A. Pel\'aez,
\emph{Integral operators, embedding theorems and a Littlewood--Paley formula
on weighted Fock spaces},
J. Geom. Anal. \textbf{26} (2016), no. 2, 1109--1154.

\bibitem{FangTienRandom}
X. Fang and P. T. Tien,
\emph{Two problems on random analytic functions in Fock spaces},
Canad. J. Math. \textbf{75} (2023), no. 4, 1176--1198.
\bibitem{FT} X. Fang and P. T. Tien, \emph{Sharp Gaussian mixed-norm theory for Fock spaces}, preprint arXiv:2609.26821, 2026.

\bibitem{SI} X. Fang and P. T. Tien, \emph{Sampling and interpolation in Gaussian mixed-norm Fock spaces}, preprint arXiv:2609.23572, 2026.

\bibitem{GrudskyVasilevski}
S. Grudsky and N. Vasilevski,
\emph{Toeplitz operators on the Fock space: radial component effects},
Integral Equations Operator Theory \textbf{44} (2002), no.~1, 10--37.

\bibitem{HuLv11} Z. Hu and X. Lv, \emph{Toeplitz operators from one Fock space to another}, Integral Equations Operator Theory \textbf{70} (2011), no. 4, 541--559.
\bibitem{HuLv14} Z. Hu and X. Lv, \emph{Toeplitz operators on Fock spaces $F^p(\varphi)$}, Integral Equations Operator Theory \textbf{80} (2014), no. 1, 33--59.
\bibitem{HuLu} Z. Hu and J. Lu, \emph{Essential norm of Toeplitz operators on the Fock spaces}, Integral Equations Operator Theory \textbf{83} (2015), no. 2, 197--210.
\bibitem{HuLv17} Z. Hu and X. Lv, \emph{Positive Toeplitz operators between different doubling Fock spaces}, Taiwanese J. Math. \textbf{21} (2017), no. 2, 467--487.

\bibitem{LiuHausdorff}
Y. Liu,
\emph{Hausdorff operators on weighted mixed norm Fock spaces},
Bull. Malays. Math. Sci. Soc. \textbf{48} (2025), no. 2, Paper No. 48.

\bibitem{LiuProjection}
Y. Liu,
\emph{Fock projections on mixed norm spaces},
Mediterr. J. Math. \textbf{21} (2024), no. 6, Paper No. 171.

\bibitem{LiuShiWei}
Y. Liu, H. Shi, and S. Wei,
\emph{Boundedness of Fock projections between mixed norm spaces},
J. Math. Anal. Appl. \textbf{565} (2027), no. 1, Paper No. 130921.

\bibitem{IsralowitzEssential}
J. Isralowitz,
\emph{Compactness and essential norm properties of operators on generalized Fock spaces},
J. Operator Theory \textbf{73} (2015), no.~2, 281--314.

\bibitem{IsralowitzZhu} J. Isralowitz and K. Zhu, \emph{Toeplitz operators on the Fock space}, Integral Equations Operator Theory \textbf{66} (2010), no. 4, 593--611.
\bibitem{LuLv} J. Lu and X. Lv, \emph{Toeplitz operators between Fock spaces}, Bull. Aust. Math. Soc. \textbf{92} (2015), no. 2, 316--324.
\bibitem{Lv} X. Lv, \emph{Carleson measures and Toeplitz operators on doubling Fock spaces}, Chinese Ann. Math. Ser. B \textbf{40} (2019), no. 3, 349--362.
\bibitem{Mengestie} T. Mengestie, \emph{On Toeplitz operators between Fock spaces}, Integral Equations Operator Theory \textbf{78} (2014), no. 2, 213--224.
\bibitem{SchusterVarolin} A. P. Schuster and D. Varolin, \emph{Toeplitz operators and Carleson measures on generalized Bargmann--Fock spaces}, Integral Equations Operator Theory \textbf{72} (2012), no. 3, 363--392.

\bibitem{Stroethoff92}
K. Stroethoff,
\emph{Hankel and Toeplitz operators on the Fock space},
Michigan Math. J. \textbf{39} (1992), no.~1, 3--16.

\bibitem{Stroethoff98}
Sangadji and K.~Stroethoff,
\emph{Compact Toeplitz operators on generalized Fock spaces},
Acta Sci. Math. (Szeged) \textbf{64} (1998), no.~3--4, 657--669.

\bibitem{WangXu} E. Wang and J. Xu, \emph{Toeplitz operators between large Fock spaces in several complex variables}, AIMS Math. \textbf{7} (2022), no. 1, 1293--1306.

\bibitem{ZhuBergmanToeplitz}
K. Zhu,
\emph{Positive Toeplitz operators on weighted Bergman spaces of bounded
symmetric domains},
J. Operator Theory \textbf{20} (1988), no. 2, 329--357.

\bibitem{ZhuOperator}
K. Zhu,
\emph{Operator Theory in Function Spaces},
2nd ed., Mathematical Surveys and Monographs, vol. 138,
American Mathematical Society, Providence, RI, 2007.

\bibitem{Zhu} K. Zhu, \emph{Analysis on Fock Spaces}, Graduate Texts in Mathematics, vol. 263, Springer, New York, 2012.












\end{thebibliography}
\end{document}